\newcommand{\memph}[1]{{\color{magenta}\emph{#1}}}
\patchcommand\@starttoc{\begin{quote}}{\end{quote}}
\def\@tocline#1#2#3#4#5#6#7{\relax
  \ifnum #1>\c@tocdepth 
  \else
    \par \addpenalty\@secpenalty\addvspace{#2}%
    \begingroup \hyphenpenalty\@M
    \@ifempty{#4}{%
      \@tempdima\csname r@tocindent\number#1\endcsname\relax
    }{%
      \@tempdima#4\relax
    }%
    \parindent\z@ \leftskip#3\relax \advance\leftskip\@tempdima\relax
    \rightskip\@pnumwidth plus4em \parfillskip-\@pnumwidth
    #5\leavevmode\hskip-\@tempdima
      \ifcase #1
       \or\or \hskip 1em \or \hskip 2em \else \hskip 3em \fi%
      #6\nobreak\relax
    \dotfill\hbox to\@pnumwidth{\@tocpagenum{#7}}\par
    \nobreak
    \endgroup
  \fi}
 \theoremstyle{plain}
 \newtheorem{thm}{Theorem}[section]
 \newtheorem{cor}[thm]{Corollary}
 \newtheorem{lem}[thm]{Lemma}
 \newtheorem{prop}[thm]{Proposition}
\theoremstyle{definition}
 \newtheorem{defn}[thm]{Definition}
 \newtheorem{hyp}[thm]{Hypothesis}
\theoremstyle{remark}
 \newtheorem{rem}[thm]{Remark}
 \newtheorem{nota}[thm]{Notation}
 \newtheorem{exam}[thm]{Example}
 \numberwithin{equation}{section}
\theoremstyle{plain}
\DeclareMathOperator{\VF}{VF}
\DeclareMathOperator{\ACVF}{ACVF}
\DeclareMathOperator{\RV}{RV}
\DeclareMathOperator{\MM}{\mdl M}
\DeclareMathOperator{\OO}{\mdl O}
\DeclareMathOperator{\UU}{\mathfrak{U}}
 \DeclareMathOperator{\ran}{ran}
 \DeclareMathOperator{\id}{id}
 \DeclareMathOperator{\spec}{Spec}
 \DeclareMathOperator{\aut}{Aut}
 \DeclareMathOperator{\ac}{\overline{ac}}
 \DeclareMathOperator{\dcl}{dcl}
 \DeclareMathOperator{\pr}{pr}
 \DeclareMathOperator{\mgl}{GL}
\DeclareMathOperator{\K}{\mathds{k}}
\DeclareMathOperator{\res}{res}  
\def\XXint#1#2#3{{\setbox0=\hbox{$#1{#2#3}{\int}$}
\vcenter{\hbox{$#2#3$}}\kern-.5\wd0}}
\newcommand{\Z}{\mathds{Z}}
\newcommand{\A}{\mathds{A}}
\newcommand{\C}{\mathds{C}}
\newcommand{\F}{\mathds{F}}
\newcommand{\G}{\mathds{G}}
\newcommand{\Q}{\mathds{Q}}
\newcommand{\N}{\mathds{N}}
\newcommand{\R}{\mathds{R}}
\newcommand{\omin}{$o$\nobreakdash}
\newcommand{\cmin}{$C$\nobreakdash}
\newcommand{\T}{$T$\nobreakdash}
\newcommand{\dand}{\quad \text{and} \quad}
\newcommand{\tand}{~\text{and}~}
\newcommand{\gR}{\mathfrak{R}}
\newcommand{\gD}{\mathfrak{D}}
\newcommand{\gF}{\mathfrak{F}}
\newcommand{\gG}{\mathfrak{G}}
\newcommand{\0}{\emptyset}
\DeclareMathAlphabet{\mathpzc}{OT1}{pzc}{m}{it}
\newcommand{\dbra}[1]{
  \,[\mkern-6.8mu[\, #1 \,]\mkern-6.8mu]}
\newcommand{\dpar}[1]{
   (\mkern-4mu (#1 )\mkern-4mu )}
\providecommand\given{}
\newcommand\SetSymbol[1][]{%
\nonscript \: #1 \vert
\allowbreak
\nonscript\:
\mathopen{}}
\DeclarePairedDelimiterX\set[1]\{\}{%
\renewcommand\given{\SetSymbol[\delimsize]}
#1
}
 \DeclarePairedDelimiterX\norm[1]\lVert\rVert{\ifblank{#1}{\:\cdot\:}{#1}}
 \newcommand{\usub}[2]{#1_{\textup{#2}}}
 \newcommand{\lan}[3]{\mathcal{L}_{#1 \textup{#2} #3}}
\newcommand{\mdl}[1]{\mathcal{#1}}  
\newcommand{\bb}[1]{\mathbb{#1}}
\newcommand{\limplies}{\rightarrow}
\newcommand{\rest}{\upharpoonright}
\newcommand{\fun}{\longrightarrow}
\newcommand{\efun}{\longmapsto}
\newcommand{\sub}{\subseteq}
\newcommand{\mi}{\smallsetminus}
\newcommand{\colim}[1]{\underset{#1}{\text{colim}}\,}  
\newcommand{\la}{\langle}
\newcommand{\ra}{\rangle}
\newbox\gnBoxA
\newdimen\gnCornerHgt
\newdimen\gnArgHgt
\def\code #1{%
        \setbox\gnBoxA=\hbox{$#1$}%
        \gnArgHgt=\ht\gnBoxA%
        \ifnum \gnArgHgt<\gnCornerHgt
                \gnArgHgt=0pt%
        \else
                \advance \gnArgHgt by -\gnCornerHgt%
        \fi
        \raise\gnArgHgt\hbox{$\ulcorner$} \box\gnBoxA %
                \raise\gnArgHgt\hbox{$\urcorner$}}
\DeclareMathOperator{\mVF}{{\mu}{\VF}}
\DeclareMathOperator{\mgVF}{{\mu} {\VF}}
\DeclareMathOperator{\mRV}{{\mu}{\RV}}
\DeclareMathOperator{\mgRV}{{\mu}{\RV}}
\DeclareMathOperator{\mG}{{\mu}{\Gamma}}
\DeclareMathOperator{\RES}{RES}
\DeclareMathOperator{\mgRES}{{\mu}{\RES}}
\DeclareMathOperator{\mRES}{{\mu}{\RES}}
\DeclareMathOperator{\isp}{I_{sp}}
\DeclareMathOperator{\mgisp}{{\mu}{I_{sp}}}
\DeclareMathOperator{\mgE}{{\mu}{\bb E}}
\DeclareMathOperator{\mgL}{{\mu}{\bb L}}
\DeclareMathOperator{\rv}{rv}
\DeclareMathOperator{\csn}{csn}
\DeclareMathOperator{\rcsn}{\overline {csn}}
\DeclareMathOperator{\vv}{val}
\DeclareMathOperator{\gsk}{\mathbf{K}_+}
\DeclareMathOperator{\ggk}{\mathbf{K}}
\DeclareMathOperator{\sggk}{!\mathbf{K}}
\DeclareMathOperator{\fin}{fin}
\DeclareMathOperator{\vrv}{vrv}
\DeclareMathOperator{\pvf}{pr_{\VF}}
\DeclareMathOperator{\gal}{Gal}
\DeclareMathOperator{\RCF}{RCF}
\DeclareMathOperator{\db}{d \! b}
\DeclareMathOperator{\var}{Var}
\DeclareMathOperator{\TVF}{TVF}
\DeclareMathOperator{\TRV}{TRV}
\DeclareMathOperator{\TRES}{TRES}
\DeclareMathOperator{\TCVF}{TCVF}
\newcommand{\LT}{$\lan{T}{}{}$\nobreakdash}
\DeclareMathOperator{\bfk}{\bf k}
\DeclareMathOperator{\puR}{\tilde \R}
\DeclareMathOperator{\puC}{\tilde \C}
\DeclareMathOperator{\TG}{T\Gamma}
\DeclareMathOperator{\Vol}{Vol}
\DeclareMathOperator{\Var}{Var}
\DeclareMathOperator{\RVar}{\R \kern -3.3 pt \Var}
\DeclareMathOperator{\tRVar}{\tilde{\R} \kern -3.3 pt \Var}
\DeclareMathOperator{\Def}{Def}
\DeclareMathOperator{\sgn}{sgn}
\DeclareMathOperator{\tbk}{tbk}
\DeclareMathOperator{\gdv}{{\ggk}^{\hat \delta}\var_{\R}}
\DeclareMathOperator{\gmv}{{\ggk}^{\hat \mu}\var_{\C}}
\DeclareMathOperator{\gsv}{{\ggk}^{\mu_2}\RVar}
\DeclareMathOperator{\kuq}{{\K^{\times}} \cup \Q}
\newcommand{\vta}{\vartheta}
\begin{document}

\title[Motivic integration and Milnor fiber]{Motivic integration and Milnor fiber}


\author[G. Fichou]{Goulwen Fichou}
\address{Goulwen Fichou \\ Universit\'e de Rennes \\ CNRS \\ IRMAR-UMR 6625 \\ F-35000 Rennes \\ France}
\email{goulwen.fichou@univ-rennes1.fr}

\author[Y. Yin]{Yimu Yin}
\address{Yimu Yin, Santa Monica, California}
\email{yimu.yin@hotmail.com}

\thanks{The research leading to the true claims in this paper has been partially supported by the grant ANR-15-CE40-0008 (D\'efig\'eo), CNRS, the SYSU grant 11300-18821101.  We also thank the Oberwolfach Research Institute for Mathematics and the Pacific Institute for the Mathematical Sciences at the University of British Columbia for their hospitality}

\keywords{Hrushovski-Kazhdan style motivic integration, equivariant Grothendieck ring, motivic zeta function, Denef-Loeser motivic Milnor fiber, Thom-Sebastiani formula, \T-convex valued field}

\subjclass[2010]{14E18, 12J25, 14B05}

\dedicatory{In memory of Masahiro Shiota}

\begin{abstract}
We put forward in this paper a uniform narrative that weaves together several variants of Hrushovski-Kazhdan style integral, and describe how it can facilitate the understanding of the Denef-Loeser motivic Milnor fiber and closely related objects. Our study  focuses on the so-called ``nonarchimedean Milnor fiber'' that was introduced by Hrushovski and Loeser, and  our thesis is that it is a richer embodiment  of the underlying philosophy of the Milnor construction.  The said narrative is first developed in the more natural complex environment, and is then extended to the real one via descent. In the process of doing so, we are able to provide more illuminating new proofs, free of resolution of singularities, of a few pivotal results in the literature, both complex and real. To begin with, the real motivic zeta function is shown to be rational, which yields the real motivic Milnor fiber; this is an analogue of the Hrushovski-Loeser construction. We also establish, in a much more intuitive manner, a new Thom-Sebastiani formula, which can be specialized to the one given by Guibert-Loeser-Merle. Finally, applying \T-convex integration after descent, matching  the Euler Characteristics of the topological Milnor fiber and the motivic Milnor fiber becomes a matter of simple computation, which is not only free of resolution of singularities as in the Hrushovski-Loeser proof, but is also free of other sophisticated algebro-geometric machineries.
%
\end{abstract}

\maketitle

\tableofcontents

\section{Introduction}\label{sec:intro}

Recent years have seen significant development in applying  Hrushovski-Kazhdan's integration theory to the  study of Denef-Loeser's motivic Milnor fiber and related topics. The main goal of this paper is to articulate a uniform narrative on such interactions, and thereby not only recover several fundamental results regarding motivic Milnor fiber but also subjugate them to the same principles afforded by the new perspective, and hopefully open up new fronts of inquiry in the process. This narrative is summarized in the diagram (\ref{sum:narr}) below.

More concretely, we shall reconstruct motivic Milnor fibers  as motivic integrals, establish a general type of Thom-Sebastiani formula, and retrieve  invariants of the corresponding topological Milnor fibers, all without using resolution of singularities. In fact, there are several variants of the Hrushovski-Kazhdan style integration at play here and their synergy  is the driving force of our telling. Among these variants, the central one is of course the original construction as developed in \cite{hrushovski:kazhdan:integration:vf}. It works for any algebraically closed valued fields of equal characteristic $0$ and is flexible enough to allow arbitrary choice of parameter spaces that satisfy certain mild conditions. Varying the parameter space enables one to study different categories of definable sets that are equipped with suitable Galois actions, which is highly desirable in the applications we are interested in (a list of the various pairs of ambient and parameter spaces is provided at the end of this introduction for the reader's convenience). Such a perspective is first put forward in  \cite{hru:loe:lef} for the purpose of  finding a resolution-free construction of the complex  motivic Milnor fiber, among other things  (see also \cite{Nic:Pay:trop:fub, Thuong} for further developments).

To begin with, by an (algebraic) variety over a field $\bfk$, we mean a reduced separated $\bfk$-scheme of finite type. We denote by $\Var_{\bfk}$ the category of  varieties over $\bfk$.

The Grothendieck semiring $\gsk \mdl C$ of a category $\mdl C$ is the free semiring generated by the isomorphism classes of $\mdl C$, subject to the usual scissor relation $[A \mi B] + [B] = [A]$ when $B$ is a subobject of $A$, where $[A]$, $[B]$ denote the isomorphism classes and ``$\mi$'' is certain binary operation, usually just set subtraction; additional relation may be imposed, to be determined in context. Sometimes $\mdl C$ is also equipped with a binary operation --- for example, cartesian product of sets or (reduced) fiber product of varieties --- that induces multiplication in $\gsk \mdl C$, in which case $\gsk \mdl C$ becomes a commutative semiring. The formal groupification $\ggk \mdl C$ of  $\gsk \mdl C$ is then a commutative ring. If a group $G$ acts on the objects of $\mdl C$ and the morphisms of $\mdl C$ are $G$-equivariant, that is, they commute with $G$-actions, then the corresponding $G$-equivariant Grothendieck ring is denoted by $\ggk^G \mdl C$. If $G = \lim_n G_n$ is profinite then we shall always impose the condition that a $G$-action factor through some $G_n$-action. The archetypal example is the Grothendieck ring $\gmv$ of varieties over $\C$ with good $\hat \mu$-actions (with an additional condition that identifies linear actions on affine spaces with the trivial one), where $\hat \mu$ is the procyclic group of roots of unity.

In this introduction, for simplicity, we shall just consider a nonconstant polynomial function $f : (\C^d, 0) \fun (\C, 0)$ such that $0$ is a singular point, that is, $\nabla f (0) = 0$. For $0 < \eta \ll \delta \ll 1$, the topological type (or even the diffeomorphism type) of the set $F_a = \bar B(0,  \delta) \cap f^{-1}(a)$, where $\bar B(0,  \delta)$ is the closed ball of radius $\delta$ centered at $0$, is independent of the choice of $\eta$, $\delta$, and  $a \in (0, \eta]$. This topological type, referred to as the (closed) \memph{Milnor fiber} of $f$,  is denoted by $F_{f}$. The open Milnor fiber, where the open ball $B(0,  \delta)$ is used, is also of interest, but more so in the real environment than in the complex one. We will come back to this later.

Let $\mathscr L$ be the space of formal arcs on $\C^d$ at $0$. So each element in $\mathscr L$ is of the form $\gamma(t) = (\gamma_1(t), \ldots, \gamma_d(t))$, where $\gamma_i(t)$ is a complex formal power series with $\gamma_i(0) = 0$. Let $\mathscr L_m$ be the space of such arcs modulo $t^{m+1}$ (also referred to as ``truncated arcs''). Consider the  subset of $\mathscr L_m$:
\begin{equation*}
  \mdl X_{f,m}  = \set{\gamma(t) \in \mathscr L_m \given f(\gamma(t)) = t^m \mod t^{m+1} }.
\end{equation*}
It may be viewed as the set of closed points of an algebraic variety over $\C$ and carries a natural $\mu_{m}$-action. The motivic zeta function attached to $f$ is then the generating series whose coefficients are in effect the ``$\hat \mu$-equivariant motivic volumes'' of the sets of truncated arcs above:
\begin{equation}\label{old:zeta}
 Z_f(T) \coloneqq \sum_{m \geq 1} [\mdl X_{f,m}] [\A]^{-nd}T^n \in \gmv[[\A]^{-1}] \dbra{T}.
\end{equation}
Here and below $\A$ denotes the affine line in question.

It is  shown in \cite{denefloeser:arc, den:loe:2002} that $Z_f(T)$ is rational and the motivic Milnor fiber $\mathscr S_{f} \coloneqq - \lim_{T \limplies \infty} Z_{f}(T)$ is then extracted from this rational expression via a formal process of sending the variable $T$ to infinity (this process is also summarized in \cite[\S~8.4]{hru:loe:lef}). Of course, to justify calling $\mathscr S_{f}$ a ``Milnor so-and-so'' one needs to show, at the very least, that invariants of the topological Milnor fiber  $F_f$ can be recovered from it.  This is indeed the case for, say, the Euler characteristic and the Hodge characteristic.

Originally, both the proof that $Z_f(T)$ is rational and the proof that the Euler (or Hodge) characteristics coincide rely on resolution of singularities.  More recently, in \cite{hru:loe:lef}, these results are established by way of a more conceptual construction, namely the Hrushovski-Kazhdan integration. To briefly outline the methodology, we work in  the field $\puC \coloneqq \bigcup_{m \in \Z^+} \C \dpar{ t^{1 / m} }$ of complex Puiseux series. This is the algebraic closure of the field $\C \dpar t$ of complex Laurent series, where a typical element  takes the form $x = \sum_{n \in \Z} a_n t^{n/m}$ for some $m \in \Z^+$ such that, for some $n' \in \Z$,  $a_n = 0$ for all $n < n'$. We think of $\bfk \coloneqq \C$ as a subfield of $\puC$ via the embedding $a \efun at^0$. There is an obvious  valuation map $\vv: \tilde \C^\times \fun \Q$ such that the valuation ring $\OO \coloneqq \C \dbra {t^{\infty}}$ consists of those series with nonnegative exponents and the maximal ideal $\MM$ consists of those series with positive exponents. Its residue field $\K$ admits a section onto $\bfk$ and hence is isomorphic to $\C$. It is well-known that $(\puC, \OO)$ is an algebraically closed valued field.

For a series $x = \sum_{n \in \Z} a_n t^{n/m} \in \puC$ with $\vv(x) = p/m$, let $\rv(x) = a_p t^{p/m}$, which is called the \memph{leading term} of $x$. Then the motivic zeta function attached to $f$ may be expressed as
\begin{equation}\label{intro:zeta:mot}
Z_{f}(T) = \sum_{n \geq 1}  H_m([\mdl X_f]) T^n,
\end{equation}
where the coefficients $H_m([\mdl X_f])$ are certain Hrushovski-Kazhdan integrals of definable sets that take values in $\gmv [[\A]^{-1}]$,  and the so-called \memph{nonarchimedean Milnor fiber of $f$}
\[
\mdl X_f = \set{x \in \MM^d \given \rv(f(x)) = \rv(t) }
\]
is a definable set over the parameter space (the ``ground field'') $\bb S = \C \dpar t$. Formulated in this way, the rationality of $Z_{f}(T)$ essentially follows from  certain computation rules of (convergent) geometric series. That the Euler characteristics of $\mathscr S_{f}$ and $F_{f}$ coincide follows from the fact that we can express both the Euler characteristic of each coefficient of $Z_{f}(T)$ and the Euler characteristic of $F_{f}$ in terms of traces of the monodromy action on the cohomological groups of $F_{f}$, where the first expression relies on  the resolution-free proofs of the A'Campo-Denef-Loeser formula (this is the main point of \cite{hru:loe:lef}) and quasi-unipotence of local monodromy (see \cite[Remark~8.5.5]{hru:loe:lef}).

It is this kind of more conceptual viewpoint --- no arbitrary choice of a resolution for computational purposes --- we aim to emulate and develop further in this paper. Our discussion will lean toward real geometry, because that is where some of our new results are more pronounced. Here ``real geometry'' is broadly construed and may mean the study of varieties over $\R$ or, more significantly, real varieties in the sense of  \cite{BCR} (real points of varieties over $\R$), or even semialgebraic (more generally, \omin-minimal) geometry. Accordingly,  there is the issue of choosing or formulating an appropriate variant of the Hrushovski-Kazhdan integration that reflects the choices of both the kind of  motivic Milnor fiber one wants to construct and the category in which such a construction is carried out. The results are described in  detail below.

\begin{figure}[htb]
\begin{equation}\label{sum:narr}
\bfig
\iiixiii(0,0)/->`->`->`->``->``->`->```/<1100,400>[{\ggk \mgVF^\diamond[*]}`{\ggk  \mgRV^{\db}[*] / (\bm P_\Gamma)}`{\sggk \RES}`{\ggk \VF_*}`{\ggk \RV[*] / (\bm P - 1)}`{\sggk \RES}`{\ggk \VF_{\puR}}`{\ggk \RV_{\puR}[*] / (\bm P - 1)}`{\sggk \RES_{\puR}};\int^{\diamond}`\bb E^{\diamond}````\int``-/ {\gF_{\puR}}`-/{\gR_{\puR}}```]
\iiixii(0,-400)|mmmmmbb|/->``->`->`->`->`/<1100,400>[{\ggk \VF_{\puR}}`{\ggk \RV_{\puR}[*] / (\bm P - 1)}`{\sggk \RES_{\puR}}`{\ggk \TVF_*}`{\ggk \TRV[*] / (\bm P - 1)}`{\ggk \TRES};\int_{\puR}`````\int^T`]
\morphism(1100, 400)|a|/@<25\ul>/<1100,0>[{\ggk \RV[*] / (\bm P - 1)}`{\sggk \RES};\bb E_b]
\morphism(1100, 400)|b|/@{.>}@<-25\ul>/<1100,0>[{\ggk \RV[*] / (\bm P - 1)}`{\sggk \RES};\bb E_g]
\morphism(1100, 0)|a|/@<25\ul>/<1100,0>[{\ggk \RV_{\puR}[*] / (\bm P - 1)}`{\sggk \RES_{\puR}};\bb E_{b, \puR}]
\morphism(1100, 0)|b|/@{.>}@<-25\ul>/<1100,0>[{\ggk \RV_{\puR}[*] / (\bm P - 1)}`{\sggk \RES_{\puR}};\bb E_{g, \puR}]
\morphism(1100, -400)|a|/@<25\ul>/<1100,0>[{\ggk \TRV[*] / (\bm P - 1)}`{\ggk \TRES};\bb E^T_{b}]
\morphism(1100, -400)|b|/@{.>}@<-25\ul>/<1100,0>[{\ggk \TRV[*] / (\bm P - 1)}`{\ggk \TRES};\bb E^T_{g}]
\square(2200, 400)|amrm|/->`->`->`->/<800,400>[{\sggk \RES}`\gdv`{\sggk \RES}`\gdv; \Theta`\id`\id`\Theta]
\square(2200, 0)|amrm|/`->`->`->/<800,400>[{\sggk \RES}`\gdv`{\sggk \RES_{\puR}}`\gsv; `-/{\gR_{\puR}}`\Xi`\Theta_{\puR}]
\square(2200, -400)/`->`->`->/<800,400>[{\sggk \RES_{\puR}}`\gsv`{\ggk \TRES}`\Z; ``\chi^{BM}`\chi]
\efig
\end{equation}
\end{figure}

Suppose that the nonconstant polynomial function $f$ is defined over $\R$.

To begin with, we may still  work in the framework of \cite{hru:loe:lef}, that is, the original Hrushovski-Kazhdan integration theory as applied to the categories of definable sets in the $\lan{}{RV}{}$-structure $\puC$, which is an $\ACVF$-model. For the  formal definitions of the first-order language $\lan{}{RV}{}$ and the $\lan{}{RV}{}$-theory $\ACVF$ of algebraically closed valued field of equal characteristic $0$, we refer the reader to \cite[\S~2]{Yin:special:trans}.  The two sorts $\VF$, $\RV$ of $\lan{}{RV}{}$ are  interpreted as  $\puC$,  $\tilde \C^\times / (1 + \MM)$ (or, equivalently, the group of leading terms) and the cross-sort function $\rv : \VF^\times \fun \RV$ as the quotient homomorphism (or the leading term map described above). The homomorphism from $\RV$ onto the value group $\Gamma = \Q$, also referred to as the $\Gamma$-sort, with the kernel $\K^\times$, is denoted by $\vrv$. All this is encapsulated in the commutative diagram
\begin{equation}\label{LRV:diag}
\bfig
 \square(0,0)/^{ (}->`->>`->>`^{ (}->/<600, 400>[\OO \mi \MM`\VF^{\times}`\K^{\times}`
\RV;`\text{quotient}`\rv`]
 \morphism(600,0)/->>/<600,0>[\RV`\Gamma;\vrv]
 \morphism(600,400)/->>/<600,-400>[\VF^{\times}`\Gamma;\vv]
\efig
\end{equation}
where the bottom sequence is exact. There indeed exists a natural isomorphism $\RV \fun \Q \oplus \C^\times$ given by $a_qt^q \efun (q, a_q)$, though it is not definable.

Since  we intend to study real geometry, the parameter space  for definable sets should not be $\C \dpar t$ as in \cite{hru:loe:lef} but rather $\R \dpar t$, and the Galois group $\gal(\puC / \R \dpar t)$ is then identified with the profinite group $\hat \delta \coloneqq \hat \mu \rtimes \gal(\puC / \puR)$, where $\puR$ is the field of real Puiseux series, that is, the real closure of $\R \dpar t$.

The category $\VF_*$ consists of the definable subsets of $\VF^n$, $n \geq 0$, as objects (alternatively, the definable subsets of varieties over $\R \dpar t$) and the definable bijections between them as morphisms. The category $\RV[k]$ essentially consists of the finite covers of definable subsets of $\RV^k$ as objects and the definable bijections between them as morphisms. The category  $\RV[*]$  is the coproduct of $\RV[k]$, $k \geq 0$, and hence is equipped with a gradation by ambient dimensions. One of the  main results of \cite{hrushovski:kazhdan:integration:vf} is the canonical isomorphism $\int$ in (\ref{sum:narr}) between the   Grothendieck rings, where $\bm P$ stands for the element $[\rv(1 + \MM)] - [\rv(\MM \mi 0)]$  in $\ggk \RV[1]$ (so the principal ideal $(\bm P - 1)$ is not homogenous).

The structure of $\ggk \RV[*]$ can be significantly elucidated. To wit, it is isomorphic to a tensor product of two other Grothendieck rings $\ggk \RES[*]$ and $\ggk \Gamma[*]$, where  $\RES[*]$ is  the category of twisted constructible sets in the residue field $\K$  and $\Gamma[*]$ is the category of definable sets in the value group $\Gamma$ (as an \omin-minimal group), both are graded by ambient dimensions. The objects of $\RES[*]$ are twisted because the short exact sequence at the bottom of (\ref{LRV:diag}) does not admit a natural splitting, and $\ggk \Gamma[*]$ is not the Grothendieck ring of \omin-minimal groups because not all definable bijections are admitted as morphisms. Anyway, we have two retractions from $\ggk \RV[*]$ onto a quotient $\sggk \RES$ of $\ggk \RES$ (the gradation is forgotten), reflecting the fact that there are two Euler characteristics in the $\Gamma$-sort; these are labeled $\bb E_{b}$, $\bb E_{g}$ in (\ref{sum:narr}). Here $\bb E_{g}$ really takes value in $\sggk \RES[[\A]^{-1}]$; indeed, all the corresponding rings downstream thence should be localized at $[\A]$ (we take $[\A] = -1$ for $\Z$), but this is not shown for  brevity.

The isomorphism $\Theta$ is constructed as in \cite[\S~4.3]{hru:loe:lef}.

The motivic zeta function $Z_f(T)$ now resides in $\gdv[[\A]^{-1}] \dbra{T}$. However,  the coefficients of $Z_f(T)$ requires  a kind of crude volume forms and the  integral $\int$ (or other variants in \cite{hrushovski:kazhdan:integration:vf}) is not adequate for the task. Significant modifications are in order. This work has been carried out in  \cite{HL:modified} in order to  correct a technical oversight in  \cite{hru:loe:lef}, resulting in the canonical isomorphism $\int^\diamond$ in (\ref{sum:narr}). The category $\mgVF^{\diamond}[*]$  consists of the proper invariant objects of $\VF_*$  and the category $\mgRV^{\db}[*]$ the doubly bounded objects of $\RV[*]$, all  equipped with $\Gamma$-volume forms (see \cite[Definitions~6.34, 4.9]{HL:modified}). The nonarchimedean Milnor fiber $\mdl X_f$ of $f$ is an object of $\mgVF^{\diamond}[*]$ (with the trivial volume form). Note that $\mgVF^{\diamond}[*]$  is also graded since, as in classical measure theory, gradation by ambient dimensions is a necessity in the presence of volume forms (a curve has volume zero if considered as a subset of a surface). Also, the ideal $(\bm P_\Gamma)$ is homogenous but is no longer principal. We may again express $\ggk  \mgRV^{\db}[*]$ as a tensor product of two other Grothendieck rings $\ggk \mRES[*]$ and $\ggk \mG^{\db}[*]$. Since the objects of $\mG^{\db}[*]$ are doubly bounded, the two Euler characteristics coincide and consequently there is only one retraction onto $\sggk \RES$, which is labeled $\bb E^{\diamond}$ in (\ref{sum:narr}).

The henselian field $\C \dpar{ t^{1/m} }$,  $m \in \Z^+$, is considered as an $\lan{}{RV}{}$-substructure of $\puC$ and, as such, its value group $\Gamma(\C \dpar{ t^{1/m} })$ is identified with $m^{-1} \Z$. Corresponding to each $\C \dpar{ t^{1/m} }$ there is a homomorphism $\bm h_m$ from a subring $\ggk^\natural \mgRV^{\db}[*]$ of $\ggk \mgRV^{\db}[*]$ into $\gdv[[\A]^{-1}]$ that vanishes on $(\bm P_\Gamma)$. The integral $\int^\diamond [\mdl X_f]$ indeed lands in  $\ggk^\natural \mgRV^{\db}[*]$ and the coefficients $H_m([\mdl X_f])$ in (\ref{intro:zeta:mot}) are given by $\bm h_m ( \int^\diamond [\mdl X_f])$. Then $\mathscr S_{f}$, that is, $- \lim_{T \limplies \infty} Z_{f}(T)$, is equal to
\[
\Big( \Theta \circ \bb E^{\diamond} \circ \int^\diamond \Big)([\mdl X_f]) = \Big( \Theta \circ \bb E_b \circ \int \Big)([\mdl X_f]) \in \gdv[[\A]^{-1}].
\]
Of course the element $(\Theta \circ \bb E_b \circ \int)([\mdl X_f])$ may be attached to $f$ directly, but to establish its significance, we need to compare it with the zeta function construction. It is this reason that forces us to work with an integral whose target only involves doubly bounded sets in $\RV$, namely $\int^{\diamond}$, instead of $\int$, so as to facilitate the computation of the coefficients of $Z_f(T)$.

Without the top row,  (\ref{sum:narr}) commutes with the dotted arrows too. The element $(\Theta \circ \bb E_g \circ \int)([\mdl X_f])$ may be attached to $f$ directly as well, but then its geometric significance is unclear, except in the bottom row. We will say more about this below.

Let $\RVar$ be the category of real varieties in the sense of \cite{BCR}.  Taking real points and forgetting the $\hat\delta$-actions, we can specialize $H_m([\mdl X_f])$ to $\ggk \RVar$ and thereby obtain the real motivic Milnor fiber of $f$ in $\ggk \RVar[[\A]^{-1}]$.  However, we are more interested in a subtler construction  that is indigenous to the real algebraic environment.

Since $f$ is assumed to be defined over $\R$, it may be realized as a real function $(\R^d, 0) \fun (\R, 0)$. The  open and closed Milnor fibers are constructed as before, but denoted by $F_f^+$, $\bar F_f^+$ since, in the absence of monodromy, replacing $(0, \eta]$ with $[-\eta, 0)$ will, in general, result in different  topological types $F_f^{-}$, $\bar F_f^{-}$. So the qualifiers  ``positive'' and ``negative'' should be tagged on in the terminology if we are to look at the whole picture. The difference between $F_f^+$ and $\bar F_f^+$ is more significant in real geometry.

The sets of real truncated arcs are denoted by $\mathscr L_m(\R)$. Replacing $\mathscr L_m$ with $\mathscr L_m(\R)$ in $\mdl X_{f, m}$, we get a real variety $\mdl X_{f, m}^1$. The complexification $\mdl X_{f, m}^1 \otimes \C$ of $\mdl X_{f, m}^1$ is a variety over $\C$, which is isomorphic to $\mdl X_{f, m}$, and carries a natural $\delta_m$-action, where $\delta_m = \mu_m \rtimes \gal(\C / \R)$. Consequently, $\mdl X_{f, m}^1$ inherits a natural $\mu_2$-action from  $\mdl X_{f, m}^1 \otimes \C$. This is indeed how  the homomorphism $\Xi$ in (\ref{sum:narr}) is constructed.

As a subfield, $\puR$ inherits from $\puC$ a valuation map, a valuation ring, etc. The pair $(\puR, \OO(\puR))$ forms a henselian valued field. There is a general procedure to specialize the integral $\int$ to sets in any henselian subfield of $\puC$, in particular, for those in $\tilde \R $ over $\R \dpar t$. 
The corresponding homomorphisms between the Grothendieck rings are marked in the third row of arrows in (\ref{sum:narr}).

Applying $\Xi$ termwise to $Z_f(T)$ brings about a (positive) motivic zeta function $Z^1_f(T)$, which belongs to $\gsv[[\A]^{-1}] \dbra T$; there is of course a negative one too. The rationality of  $Z_f^{1}(T)$ and hence the existence of the real motivic Milnor fiber $\mathscr S_{f}^1$ in $\gsv[[\A]^{-1}]$ follows. Let $\mdl X_f^{1}$ be the $\puR$-trace of $\mdl X_f$. The image of $[\mdl X_f]$ in $\ggk \VF_{\puR}$ is $[\mdl X_f^{1}]$ and hence $\mathscr S_{f}^1$ may indeed be computed purely in the real algebraic environment as $(\Theta_{\puR} \circ \bb E_{b, \puR} \circ \int_{\puR})([\mdl X_f^1])$.

The next step is to justify calling $\mathscr S_{f}^1$ a Milnor fiber by recovering invariants of $\bar F_f^+$ from $\mathscr S_{f}^1$. Actually the  only known additive invariant of $\bar F_f^+$ is the topological (or semialgebraic) Euler characteristic $\chi(\bar F_f^+)$. It is shown in \cite[Theorem~4.4]{Comte:fichou}  that $\chi(\bar F_f^+)$ does agree with  $\chi^{BM}(\mathscr S_{f}^{1})$, where $\chi^{BM}$ is the Borel-Moore Euler characteristic, also labeled as such in (\ref{sum:narr}); note that  the real motivic Milnor fiber in \cite{Comte:fichou} is the forgetful image of $\mathscr S_{f}^{1}$ in $\ggk \RVar[[\A]^{-1}]$. Their method relies on a real analogue of the A'Campo-Denef-Loeser formula, which  needs resolution of singularities. Unfortunately, upon the absence of  monodromy in the real environment, we cannot follow the method of \cite{hru:loe:lef} outlined above to get a resolution-free proof, at least  not without further elucidating the effect of the monodromy action on the complexification of the real Milnor fibers as suggested by \cite{McParu:mono}.

Going through a different route, we use the theory of motivic integration for \T-convex valued fields as developed in \cite{Yin:tcon:I}. This theory is rich in expressive power and hence can handle all the definable objects in the algebraic environment. On the other hand, its expressive power is also its limitation in yielding algebro-geometric information since, in the corresponding categories of definable sets, there are much more morphisms that can cause loss of algebro-geometric data when passing to the Grothencieck rings. Nevertheless, it should retain much of the  numerical information.

We work in $\puR$, which is now viewed as a real closed field equipped with both a total ordering and a valuation (or more generally a polynomially bounded \T-convex valued field). This structure is expressed in a first-order language $\lan{T}{RV}{}$, which still has two sorts $\VF$ and $\RV$. The categories $\TVF_*$, $\TRV[*]$, $\TRES[*]$, etc., are all defined similarly as before. Again, there are the canonical isomorphism $\int^T$ in (\ref{sum:narr}) between the Grothendieck rings (this is the so-called generalized Euler characteristic of definable sets in $\puR$), the tensor expression $\ggk \TRES[*] \otimes \ggk \TG[*]$ of $\ggk \TRV[*]$, and the two retractions $\bb E^T_b$, $\bb E^T_g$ in (\ref{sum:narr}). The definable sets in the residue field are precisely the semialgebraic sets and hence   $\ggk \TRES$ is canonically isomorphic to $\Z$; this is labeled $\chi$  in (\ref{sum:narr}) since it is indeed the semialgebraic Euler characteristic.

Applying $\chi^{BM}$ termwise to  $Z^1_f(T)$, we obtain a power series in $\Z \dbra T$, which is understood as a topological zeta function attached to $f$. The definable set $\mdl X^{1}_f$ may be approximated by a sequence of semialgebraic sets $\bar F_r$, $r \in \R^+$, whose semialgebraic homology eventually stabilizes. The Euler characteristic of this stabilized semialgebraic homology is equal to, on the one hand, $\chi(\bar F_f^+)$ and, on the other hand, $(\chi \circ \bb E^T_{b} \circ \int^T) ([\mdl X_f^{1}])$ and hence $\chi^{BM}(\mathscr S_{f}^1)$.

The same argument shows that $\chi(F_f^+) = (\chi \circ \bb E^T_{g} \circ \int^T) ([\mdl X_f^{1}])$, where $\bar F_f^+$, $\bb E^T_{b}$ are replaced by $F_f^+$, $\bb E^T_{g}$. As a corollary, we get
\[
\chi([\bar{F}_f^+]) = (-1)^{d+1} \chi([F_f^+]).
\]
This comes from an equality at the motivic level (the second and third rows in (\ref{sum:narr})), and may be construed as a specialization of Bittner's computation of the dual of motivic Milnor fiber in \cite{B2}.

This approach also works in the complex setting, considering $\puC$ as $\tilde \R^2$ and hence $\mdl X_f$ as an object of $\TVF_*$. It shows in particular that $\chi(F_f)$ is equal to the Euler characteristic of $\mathscr S_{f}$, as in \cite[Remark~8.5.5]{hru:loe:lef}, but without using  even  quasi-unipotence of local monodromy. Note that, over $\C$, the Euler characteristics of the open and  closed Milnor fibers coincide, so if $\mathscr S_{f}$ encodes information on both of them, one cannot see it at this level.

We can extend $\Theta \circ \bb E_b \circ \int$ further by composing the Hodge-Deligne polynomial map. According to  \cite[Reamrk~3.24]{Nic:Pay:trop}, this actually gives the Hodge-Deligne polynomial of the limit mixed Hodge structure associated with a variety over $\C \dpar t$. Extending $\Theta_{\puR} \circ \bb E_{b, \puR} \circ \int_{\puR}$ by composing the  virtual Poincar\'e polynomial map, we get a similar homomorphism into $\Z[u]$. It would be interesting to investigate if it too encodes information on limit structures. But of course  we are ahead of ourselves here because such limit structures are not yet available in the real setting.


Finally, in showcasing the potential of the framework underlying (\ref{sum:narr}), we describe another main result, namely a new (local) Thom-Sebastiani formula in mixed variables, extending that in \cite{guibert2006} (the results in \cite{GLM:IMRN, GLM:MRL} are for separate  variables and hence overlap to a much lesser extent with the case we establish here). A precursor of our method has already been used in \cite{Thuong} to recover the Thom-Sebastiani formula of \cite{DL:tom:seb, Loo:mot}. This part of the paper is somewhat independent of the previous discussion that concentrates on real geometry; on the other hand, since here we completely abandon the zeta function point of view, one does need to be convinced, at the outset, of the significance of the construction $(\Theta \circ \bb E_b \circ \int)([\mdl X_f])$.

We still work in $\puC$, but change the parameter space to ${\bfk} \cup {\Gamma} \cong \C \cup \Q$.  Unlike in the previous situations, the (model-theoretic) automorphism group $\aut(\puC / \C \cup \Q)$ is much larger than the automorphism group $\aut(\RV / \kuq)$. It is this latter group, henceforth abbreviated as $\hat \tau$, that we need. This group is isomorphic to  $\lim_n \C^\times_n$, where each $\C^\times_n$ is just a copy of $\C^\times$ and the transition morphisms are the same as in  $\hat \mu = \lim_n \mu_n$.  More concretely, the elements in $\hat \tau$  may be identified as sequences $\hat a=(a_n)_n$ of $n$th roots of $a$, $a\in \C^\times$, satisfying $a_{kn}^n=a_k$. Such an element acts on $\puC$ by the equation $\hat a \cdot t^{1/n}= a_n t^{1/n}$.

The Thom-Sebastiani formalism is typically concerned with expressing the Milnor fiber of a compound function $h(f_1, \ldots, f_l)$ in terms of the Milnor fibers of the component functions $f_1, \ldots, f_l$. The classical results and most of the later generalizations can only handle the case of separate variables, that is, $h(f_1, \ldots, f_l)$ is regarded as a function on the product $\prod_i X_i$, where $X_i$ is the source variety of  $f_i$, and often $h$ is just a linear form. Our formula, on the other hand, is much more sophisticated.

For functions $\phi : X \fun A$ and $\psi : Y \fun B$, we write  $\phi \oplus \psi$ for the function $X \cap Y \fun A \times B$ given by $x \efun (\phi(x), \psi(x))$.

Let  $f, g : (\C^d, 0) \fun (\C, 0)$ be nonconstant polynomial functions, singular at $0$, and $h(x, y)$ a polynomial of the form $y^{N} + \sum_{2 \leq \imath \leq \ell} x^{m_{\imath}}$; we may rename $N$ as  $m_1$, but its role is somewhat different and hence is denoted differently (the case $\ell = 2$ is dealt with in \cite{guibert2006}). For each $1 \leq \imath \leq \ell$, let
$f_{(\imath)} = \sum_{2 \leq i \leq \imath} f^{m_i} : (\C^d, 0) \fun (\C, 0)$ and $\vta^{(\imath)} = (m_2 / m_\imath, m_i / m_\imath)_{2 \leq i \leq \imath}$; here $f_{(1)}$ is interpreted as the zero function and $\vta^{(1)}$ as $1 \in \Q$. Let $g^N_{\imath} \oplus f_{\imath}$ denote the restriction of $g^N \oplus f$ to the set ${\MM^d} \cap (\vv \circ (g^N \oplus f))^{-1}(m_2 / m_\imath, 1 / m_\imath)$.


The category $\Var_{\C}^{\vta^{(\imath)}}$ consists of varieties over $\G_m^2$ with  $(\vta^{(\imath)}, n)$-diagonal $\G_m$-actions; see \S~\ref{cat:ang} for the unexplained terms. Each object of $\Var_{\C}^{\vta^{(\imath)}}$ may be thought of as equipped with a $\hat \tau$-action that factors through, for some $n \in \Z^+$, the canonical epimorphism $\tau_n : \hat \tau \fun \C^\times_n$, and the corresponding Grothendieck ring is denoted by $\ggk^{\vta^{(\imath)}} \Var_{\C}$. If $\imath = 1$ then we abbreviate $\Var_{\C}^{\vta^{(\imath)}}$, $\ggk^{\vta^{(\imath)}} \Var_{\C}$ as $\Var_{\C}^{\bm 1}$, $\ggk^{\bm 1} \Var_{\C}$; actually $\Var_{\C}^{\bm 1}$ is just the category $\Var_{\G_m}^{\G_m}$ in \cite{guibert2006} and hence is equivalent to the category of varieties over $\C$ with good $\hat \mu$-actions. There is a $\ggk^{\hat \tau} \Var_{\C}$-module homomorphism
\[
\Psi_{\vta^{(\imath)}} : \ggk^{\vta^{(\imath)}} \Var_{\C} \fun \ggk^{\bm 1} \Var_{\C},
\]
which is referred to as a convolution operator.

Suppose that $m_2 \ll N \ll m_3 \ll \ldots \ll m_\ell$. Then there is an operator $\Theta^{\ac}_{\vta^{(\imath)}} \circ \bb E^{\ac}_{b, \vta^{(\imath)}} \circ \int^{\ac}_{\vta^{(\imath)}}$ on classes of functions of the form $g^N_{\imath} \oplus f_{\imath}$, with target $\ggk^{\vta^{(\imath)}} \Var_{\C}$, which may be roughly understood as $\Theta \circ \bb E_{b} \circ \int$ applied fiberwise. Abbreviate $(\Theta^{\ac}_{\vta^{(\imath)}} \circ \bb E^{\ac}_{b, \vta^{(\imath)}} \circ \int^{\ac}_{\vta^{(\imath)}})([g^N_{\imath} \oplus f_{\imath}])$ as $\mathscr S^\sharp_{g^N_{\imath} \oplus f_{\imath}}$; we call it the \memph{motivic Milnor fiber of $g^N_{\imath} \oplus f_{\imath}$ over $\G_m^2$}. Then our Thom-Sebastiani formula states that, in $\ggk^{\bm 1} \Var_{\C} \cong \gmv$, $\mathscr S_{h(f, g)}^\sharp$ is equal to
\begin{equation}\label{intro:TS}
\mathscr S_{g^N}^\sharp([Z_{f}]) + \mathscr S_{f^{m_{2}}}^\sharp + \sum_{2 < \imath \leq \ell} \mathscr S^\sharp_{f^{m_{\imath}}}([Z_{g^N + f_{(\imath-1)}}]) - \sum_{2 \leq \imath \leq \ell} \Psi_{\vta^{(\imath)}} (\mathscr S^\sharp_{g^N_{\imath} \oplus f_{\imath}});
\end{equation}
here the first and the third terms are the motivic Milnor fibers  over $\G_m$ but restricted to the indicated zero sets (in \cite{guibert2006} a variant of this is called iterated motivic vanishing cycles).

As before, the whole construction can be specialized to the real setting if $f$, $g$ are defined over $\R$, of which  the Thom-Sebastiani formula obtained in \cite{Campesato} is a special case.

A novel perspective behind (\ref{intro:TS}) is that $\mathscr S_{h(f, g)}^\sharp$ may be decomposed into terms corresponding to combinatorial data that can be read off of the tropical curve of $h(x,y)$. This actually suggests that our method can handle polynomials more complicated than $h(x,y)$, for instance, those with more variables and even mixed terms. However,  the complexity of the combinatorics involved will become quite heavy, perhaps disproportionately so, as it is unclear how the ground gained can shed new light on the geometry and topology of the singularities in question. Thus we have chosen to just present a simple case that is already beyond what is known in the literature.

\subsubsection*{Ambient  and parameter spaces}
For the general integration theory summarized in \S~\ref{sec:HK:main}, we work in a   sufficiently saturated $\ACVF$-model $\bb U$ with parameters in an arbitrary substructure $\bb S$ that is definably closed.

Throughout \S~\ref{section:milnor}, the parameter space  $\bb S$ is $\R \dpar{ t }$. In \S~\ref{section:spec:hen} is described a general descent procedure from $\bb U$ to an arbitrary henselian substructure $\bb M$. For the rest of the section, the pair $(\bb U, \bb M)$ is specialized to $(\puC, \puR)$.

In \S~\ref{sec:tcon}, we work in the $\TCVF$-model $\puR$ with all parameters allowed (restricting to $\R \dpar{ t }$ makes no sense in the presence of a total ordering). There also appears in \S~\ref{top:complex} the ambient space $\puC$ with the parameter space $\C \dpar{ t }$,  both as interpreted in $\puR$.

We have mentioned above that the complex Thom-Sebastiani formula is obtained  in $\puC$ with $\bb S = \C \cup \Q$, treating $\Gamma \cong \Q$ as a definable sort in the model-theoretic sense. For the real case in \S~\ref{sect:TS:real}, $\bb S$ is changed to $\R \cup \Q$ so that descent from $\puC$ to $\puR$ may be carried out.

The parameter space may change in context, which we do not always point out. For instance, if $\bb S = \C \cup \Q$ then studying a $t$-definable set  in $\puC$ in effect changes $\bb S$ to $\C \dpar{ t }$.

\section{Hrushovski-Kazhdan style integration}\label{sec:HK:main}

The first part of the paper relies heavily on \cite{HL:modified}, and hence we shall make extensive use of the notations and terminologies therein, starting with those in \cite[\S~2.1]{HL:modified}; pointers will be provided along the way.

In this section, following the tradition in the model-theoretic literature, we work in a sufficiently saturated model $\bb U$ of $\ACVF$, together with a fixed parameter space $\bb S $,  which is a substructure of $\bb U$. This is of course a matter of convenience, otherwise one needs to change the model one is working in whenever compactness is applied. We assume that  the map $\rv$ is surjective in $\bb S$ (but the value group $\Gamma(\bb S)$ of $\bb S$ could be trivial) and the definable closure $\dcl \bb S$ of $\bb S$ equals $\bb S$. Among other things, this latter condition implies that if $\Gamma(\bb S)$ is nontrivial then the underlying valued field of $\bb S$ is henselian (in fact this is equivalent to the condition $\dcl \bb S = \bb S$). So by a definable set  we shall always mean an $\bb S$-definable set, unless indicated otherwise.

\begin{rem}\label{imag:Gam}
Semantically, we shall treat the value group $\Gamma$ as a definable sort (the $\Gamma$-sort) consisting of imaginary elements, that is, classes of definable equivalence relations. However, syntactically, any reference to $\Gamma$ may be eliminated in the usual way and we can still work with $\lan{}{RV}{}$-formulas for the same purpose.

If $\gamma \in \Gamma$ is definable then it is in the divisible hull $\Q \otimes \Gamma(\bb S)$ of $\Gamma(\bb S)$, and vice versa. This does not mean, though, that the definable set $\gamma^{\sharp} = \vrv^{-1}(\gamma) \sub \RV$ (see \cite[Notation~2.7]{HL:modified}) contains a definable point unless $\gamma \in \Gamma(\bb S)$.
\end{rem}

\begin{rem}\label{pillars}
A pillar of the theory of definable sets in $\bb U$ is \memph{\cmin-minimality}, meaning that every definable subset of $\VF$ is a boolean combination of  (definable) valuative discs. Another one is the so-called \memph{orthogonality} between the $\K$-sort and the $\Gamma$-sort, meaning that every definable subset $A$ of $\bb U^n$ with $\pr_{\leq k}(A)$ in $\K$ and $\pr_{> k}(A)$ in $\Gamma$ (see \cite[Notation~2.1, Terminology~2.3]{HL:modified}) is a finite union of products $A' \times A'' \sub \K^k \times \Gamma^{n-k}$; in particular, if $A$ is the graph of a function on $\pr_{\leq k}(A)$ or $\pr_{> k}(A)$ then its image is finite.

Various regions in $\bb U$, such as the sorts $\RV$, $\K$, and $\Gamma$, are \memph{stably embedded}. In our context, this simply means that, for instance, if a set in $\RV$ is definable then it is $\RV(\bb S)$-definable,
\end{rem}

The definitions of the various categories of definable sets, with or without $\Gamma$-volume forms, are all listed in \cite[\S~3]{HL:modified} in full detail, which  shall not be repeated here. We only remark that ambient dimension plays a role in determining the volume, whatever that means, of a definable set, and this is why for an object $(A, \omega) \in \mgVF[k]$, $A_{\VF}$ is required to be a subset of $\VF^k$, whereas an object $A \in \VF[k]$ is only required to be of dimension  at most $k$.

There is  a homomorphism of graded semirings
\begin{equation}\label{tensor}
  \Psi: \gsk \RES[*] \otimes_{\gsk \Gamma^{\fin}[*]} \gsk \Gamma[*] \fun \gsk \RV[*],
\end{equation}
which is determined by the assignment
\begin{equation}\label{assgn:Psi}
([(U, f)], [I]) \efun [(U \times I^\sharp, f \times \id)].
\end{equation}
There is also the version with volume form
\[
{\mu}\Psi: \gsk \mgRES[*] \otimes_{\gsk \mG^{\fin}[*]} \gsk \mG[*] \fun \gsk \mgRV[*].
\]
The groupifications of these homomorphisms are  denoted the same.

\begin{prop}[{\cite[Prop.~10.10(1)]{hrushovski:kazhdan:integration:vf}}]\label{red:D:iso}
Both $\Psi$ and ${\mu}\Psi$ are isomorphisms of graded semirings.
\end{prop}

Note that \cite[Proposition~10.10(2)]{hrushovski:kazhdan:integration:vf} does not hold. This oversight has caused issues for certain constructions in \cite{hru:loe:lef} that depend on it. These issues have now been resolved in \cite{HL:modified}. The modified construction will be  summarized below.

Recall from \cite[Notation~3.20]{HL:modified} that  $\isp$ is the (nonhomogenous) semiring congruence relation on $\gsk \RV[*]$ generated by the pair $([1], [\RV^{\circ\circ}_\infty])$ and hence the corresponding principal ideal of $\ggk \RV[*]$ is generated by the element $\bm P - 1 \in \gsk \RV[{\leq} 1]$, where $\bm P = [1] - [\RV^{\circ \circ}] \in \ggk \RV[1]$, as we have mentioned above. Similarly, $\mgisp$ is the  semiring congruence relation on $\gsk \mgRV[*]$ generated by the pair $([1], [\RV^{\circ\circ}])$, which is homogenous, and hence the corresponding principal ideal of $\ggk \mgRV[*]$ is generated by the element $\bm P$ (the volume forms here are all $0$, see \cite[Notation~3.19]{HL:modified}).

Recall from \cite[Notation~3.21]{HL:modified} the homomorphisms (and their groupifications)
\[
\mathbb{L} : \gsk \RV[*] \fun \gsk \VF_* \dand \mgL : \gsk \mgRV[*] \fun \gsk \mgVF[*].
\]
We have $\bb L ([1]) = [1 + \MM] = [\MM]$ and $\bb L ([\RV^{\circ\circ}])=[\MM \mi 0]$, and hence $\bb L (\bm P - 1)=0$. Moreover, $\mgL (\bm P)=0$ since $\MM$ and $\MM \mi 0$ are in essential bijection (see \cite[Definition~3.5]{HL:modified}). It so happens that these relations are the only ones needed to describe the kernels of $\bb L$ and $\mgL$.

\begin{thm}\label{main:prop}
For each $k \geq 0$ there is a canonical isomorphism of semigroups
\[
\int_{+} : \gsk  \VF[k] \fun \gsk  \RV[{\leq}k] /  \isp
\]
such that $ \int_{+} [A] = [\bm U]/  \isp$ if and only if $[A] = [{\bb L}{\bm U}]$. Passing to  the colimit of the groupifications, we obtain a canonical isomorphism of rings
\[
\int : \ggk \VF_* \fun \ggk  \RV[*] /  (\bm P - 1).
\]
Similarly, for each $k \geq 0$ there is a canonical isomorphism of semigroups
\[
\int_{+}^\mu : \gsk  \mgVF[k] \fun \gsk  \mgRV[k] /  \mgisp
\]
such that $\int_{+} [\bm A] = [\bm U]/  \mgisp$  if and only if $[\bm A] = [{\mgL}{\bm U}]$. Taking the direct sum of the groupifications, we obtain a canonical isomorphism of graded rings
\[
\int^\mu : \ggk \mgVF[*] \fun \ggk  \mgRV[*] /  (\bm P).
\]
\end{thm}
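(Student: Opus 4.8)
The plan is to establish the first pair of isomorphisms (the case without volume forms) and then mimic the argument with $\mgL$ in place of $\bb L$. For the semigroup isomorphism $\int_+ : \gsk \VF[k] \fun \gsk \RV[{\leq}k]/\isp$, the key point is that the surjective semiring homomorphism $\bb L : \gsk \RV[*] \fun \gsk \VF_*$ recalled above descends to a map on the quotient by $\isp$, because the generating pair $([1], [\RV^{\circ\circ}_\infty])$ of $\isp$ is killed by $\bb L$ (indeed $\bb L([1]) = [\MM] = [1+\MM]$ and $\bb L([\RV^{\circ\circ}_\infty]) = [\MM]$ as well, using $[\RV^{\circ\circ}_\infty] = [\RV^{\circ\circ}] + 1$ and $\bb L([\RV^{\circ\circ}]) = [\MM \mi 0]$, which is in essential-free bijection — in fact definable bijection at the level of $\VF_*$ — with $[\MM \mi 0] \cup \{0\} = [\MM]$). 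So one gets an induced surjection $\overline{\bb L}$ on $\gsk \RV[{\leq}k]/\isp$ onto $\gsk \VF[k]$; I would set $\int_+ = \overline{\bb L}^{-1}$ once injectivity is shown. Graded compatibility with the $k$-filtration needs to be checked, but this is built into the construction of $\bb L_{\leq k}$.

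The heart of the matter is injectivity of $\overline{\bb L}$, equivalently: if $\bb L \bm U$ and $\bb L \bm V$ are in definable bijection (as objects of $\VF[k]$), then $[\bm U]$ and $[\bm V]$ are congruent mod $\isp$. This is precisely the content of the kernel computation for $\bb L$, and it is exactly here that the real work lies — it is the main theorem of \cite{hrushovski:kazhdan:integration:vf} (see also \cite[\S\S~7, 10]{Yin:special:trans}). I would not reprove it; rather I would cite it and explain how the statement of our Theorem~\ref{main:prop} is a repackaging. The nontrivial input is that the congruence $\isp$ — generated by the single pair $([1], [\RV^{\circ\circ}_\infty])$ — already captures \emph{all} relations forced by $\bb L$; this rests on the structure theory (\cmin-minimality, orthogonality of $\K$ and $\Gamma$, and Proposition~\ref{red:D:iso} describing $\gsk \RV[*]$ as $\gsk \RES[*] \otimes \gsk \Gamma[*]$) together with the analysis of definable bijections between sets of the form $U_f$.

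Passing to the colimit: the isomorphisms $\int_+$ for varying $k$ are compatible with the inclusions $\VF[k] \hookrightarrow \VF[k{+}1]$ and $\RV[{\leq}k] \hookrightarrow \RV[{\leq}k{+}1]$ — this is immediate from the ``if and only if'' characterization, since a witness $\bm U$ for $\int_+[A]$ at level $k$ is also a witness at level $k{+}1$. Groupifying and taking the colimit of semigroups then yields a ring isomorphism $\int : \ggk \VF_* \fun \ggk \RV[*]/(\bm P - 1)$; here one uses that groupification is a left adjoint hence commutes with colimits, and that the image of the congruence $\isp$ under groupification is exactly the principal ideal $(\bm P - 1)$, which follows by definition of $\bm P$ and the fact that $\gsk \RV[*]$ groupifies to $\ggk \RV[*]$ with $\isp$ becoming the ideal generated by $[1] - [\RV^{\circ\circ}_\infty] = -( \bm P - 1)$ up to the extra summand, i.e.\ by $\bm P - 1$.

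For the volume-form version, the argument is parallel: $\mgL : \gsk \mgRV[*] \fun \gsk \mgVF[*]$ is the surjective graded semiring homomorphism recalled above, it kills the homogeneous congruence $\mgisp$ generated by $([1], [\RV^{\circ\circ}])$ because $\mgL(\bm P) = 0$ (the essential bijection $\MM \cong \MM \mi 0$ is $\Gamma$-measure-preserving since its Jacobian is $1$), and the injectivity of the induced map — i.e.\ the computation of $\ker \mgL$ — is again the corresponding theorem of \cite{hrushovski:kazhdan:integration:vf} (cf.\ \cite[Corollary~10.6]{Yin:special:trans}), now using the measure-preserving refinement of the structure theory and the $\mu\Psi$ half of Proposition~\ref{red:D:iso}. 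Because $\mgisp$ is homogeneous, the quotient $\gsk \mgRV[k]/\mgisp$ stays in degree $k$, so taking the direct sum over $k$ rather than a colimit gives the graded ring isomorphism $\int : \ggk \mgVF[*] \fun \ggk \mgRV[*]/(\bm P)$. The main obstacle throughout is the kernel computation for $\bb L$ (resp.\ $\mgL$); everything else is formal bookkeeping about quotients, groupification, and gradings.
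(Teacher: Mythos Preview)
Your proposal is correct and matches the paper's own treatment: the paper does not give an independent proof but simply records that this is a combination of Theorems~8.8 and~8.29 of \cite{hrushovski:kazhdan:integration:vf}, which is exactly the kernel computation for $\bb L$ (resp.\ $\mgL$) that you identify as the nontrivial input and choose to cite rather than reprove. Your added explanation of the formal steps (descent of $\bb L$ to the quotient, compatibility with the filtration, groupification and colimits, and the identification of $\isp$ with the ideal $(\bm P-1)$) is accurate bookkeeping that the paper leaves implicit; the one slightly garbled parenthetical about a ``definable bijection'' between $\MM\smallsetminus 0$ and $\MM$ is harmless, since what you actually need there is just additivity, $[\MM]=[\MM\smallsetminus 0]+[\{0\}]$.
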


This is a combination of two main theorems, Theorems~8.8 and 8.29, of \cite{hrushovski:kazhdan:integration:vf}. But it is not enough for our purpose. To recover motivic zeta function and thence motivic Milnor fiber, we shall need  the theory developed in \cite{HL:modified}, complementing the work in \cite{hru:loe:lef}.  We explain here only the gist of this theory. The reader should consult \cite{HL:modified} for full details.

A set, possibly with $\Gamma$-coordinates, is \memph{bounded} if, after applying the maps $\vv$, $\vrv$, $\id$ in the $\VF$-, $\RV$-, $\Gamma$-coordinates, respectively, it is contained in a box of the form $[\gamma, \infty]^n$, and \memph{doubly bounded} if the box is of the form $[- \gamma, \gamma]^n$. An object $(A, \omega) \in \mgVF[k]$ is bounded  or doubly bounded if the graph of $\omega$ is so; similarly in the other categories. In particular, an object $(U, f, \omega) \in \mgRV[k]$ is bounded  if the graphs of $f$ and $\omega$ are both bounded; actually, by \cite[Lemma~3.26]{HL:modified}, if $U$ is doubly bounded then the images of these functions are necessarily doubly bounded.

The full subcategories of  $\mgRV[*]$, $\mG[*]$ of  doubly bounded objects are denoted by $\mgRV^{\db}[*]$, $\mG^{\db}[*]$.
The corresponding restriction of ${\mu}\Psi$ is indeed an isomorphism (see \cite[Remark~4.10]{HL:modified}):
\begin{equation}\label{bdd:to:2bdd}
\gsk \mgRES[*] \otimes \gsk \mG^{\db}[*] \fun \gsk \mgRV^{\db}[*].
\end{equation}

For each $\gamma \in \Gamma$, let  $\MM_\gamma = \set{a \in \VF \given \vv(a) > \gamma }$.  If $\gamma = (\gamma_1, \ldots, \gamma_n) \in \Gamma^n$ then $\MM_{\gamma}$ denotes the product of  $\MM_{\gamma_i}$; each coset of $\MM_\gamma$ in $\VF^n$ is called a \memph{polydisc of radius $\gamma$}. A subset of $\VF^n$ is \memph{$\gamma$-invariant} if it is a union of polydiscs of radius $\gamma$. For example, finite subsets of $\VF$ are not invariant (or rather they are $\infty$-invariant, but $\infty$ is not allowed in the  definition), the maximal ideal $\MM$ is $\gamma$-invariant for every $\gamma \in \Gamma^+$, whereas $\MM \mi 0$ is not $\gamma$-invariant for any $\gamma \in \Gamma^+$, because the radii of its maximal open subdiscs tend to $\infty$ as they approach $0$. A \memph{proper invariant} set is an invariant set $A$ such that $A_{\VF}$ is bounded and $A_{\RV}$ is doubly bounded. For example,  if $\bm U \in \RV^{\db}[k]$ then $\bb L \bm U$ is a proper invariant set (it is actually doubly bounded).

The subcategory $\mVF^{\diamond}[k]$ of $\mVF[k]$ consists the proper invariant objects and the morphisms that are compositions of relatively unary proper covariant homeomorphisms. It is rather involved to make sense of what the morphisms are, the reader is referred to \cite[Definitions~6.1, 6.2, 6.24]{HL:modified}) for detail. A crucial point to keep in mind is that every morphism in $\mVF^{\diamond}[k]$ is an honest  bijection, as opposed to merely an essential bijection, and is in effect required to admit an inverse.  So $\mgVF^{\diamond}[k]$ is already a groupoid and there is no need to pass to a quotient category as described in \cite[Remark~3.7]{HL:modified}, and it makes sense to speak of the forgetful homomorphism $\ggk \mgVF^\diamond[*] \fun \ggk \VF_*$.

\begin{nota}\label{pgamma}
For each $\gamma \in \Gamma^+(\bb S)$, let $\RV^{\circ \circ}_\gamma = \rv(\MM_{\gamma} \mi 0)$ and
\[
\bm P_\gamma = [\RV^{\circ \circ} \mi \RV^{\circ \circ}_{\gamma}] + [\{t_\gamma\}] - [1] \in \ggk \RV^{\db}[1],
\]
where $t_\gamma \in \gamma^\sharp$ is any definable point. So $[\{t_\gamma\}] = [1]$ in $\ggk \RV^{\db}[1]$.
But $t_\gamma $ also stands for an element in $\ggk \mRV^{\db}[1]$ (with the constant volume form $0$), and $[\{t_\gamma\}] \neq [1]$ in $\ggk \mRV^{\db}[1]$  unless $\gamma = 0$.

Observe that  $\bm P_\gamma$ does not depend on the choice of $t_\gamma \in \gamma^\sharp$. The ideal of $\ggk \mRV^{\db}[*]$   generated by the elements $\bm P_\gamma$ is denoted by $(\bm P_\Gamma)$. The images of $(\bm P_\Gamma)$ are contained in $(\bm P - 1)$, $(\bm P)$ under, respectively, the natural (forgetful) homomorphisms
\[
\ggk  \mgRV^{\db}[*] \fun \ggk \RV[*], \quad \ggk  \mgRV^{\db}[*] \fun \ggk  \mgRV[*].
\]
\end{nota}

By  \cite[Corollary~6.37]{HL:modified}, the map ${\mu}\bb L$ induces a surjective homomorphism of graded Grothendieck rings $\ggk \mRV^{\db}[*] \epi \ggk \mVF^{\diamond}[*]$. By  \cite[Proposition~7.24]{HL:modified}, its kernel  is precisely $(\bm P_\Gamma)$.

\begin{thm}[{\cite[Theorem~7.26, Corollary~7.27]{HL:modified}}]\label{main:prop:dia}
There is a canonical isomorphism of graded Grothendieck rings:
\[
 \int^{\diamond} : \ggk \mgVF^\diamond[*] \fun \ggk  \mgRV^{\db}[*] / (\bm P_\Gamma).
\]
It interpolates the two isomorphisms $\int$, $\int^\mu$ in the sense that the following diagram commutes:
\begin{equation}\label{diag-diamond-interpol}
\bfig
  \hSquares(0,0)/<-`->`->`->`->`<-`->/<400>[{\ggk \VF_*}`{\ggk \mgVF^\diamond[*]}`{\ggk \mgVF[*]}`{\ggk \RV[*] / (\bm P - 1)}`{\ggk \mgRV^{\db}[*] / (\bm P_\Gamma)}`{\ggk \mgRV[*] / (\bm P)}; ``\int`\int^{\diamond}`\int^\mu``]
\efig
\end{equation}
\end{thm}

Next, in light of the isomorphism $\Psi$, we have a procedure to  replace classes of polytopes with numbers (\omin-minimal Euler characteristics, see \cite[Remark~4.2]{HL:modified}) and thereby construct two retraction maps from the Grothendieck ring in $\RV$ to that in $\RES$, which is dubbed ``uniform retraction to $\RES$'' (see Example~\ref{mil:exam} for an explicit  computation):

\begin{prop}\label{base:Eb:Eg}
There are two ring homomorphisms
\[
  \bb E_g : {\ggk} \RV[*] \fun \sggk \RES[[\A]^{-1}] \dand \bb E_b : {\ggk} \RV[*]  \fun \sggk \RES[[1]^{-1}] \cong \sggk \RES.
\]
such that
\begin{itemize}[leftmargin=*]
  \item $\bm P - 1 \in \ggk \RV[1]$ vanishes under both of them,
  \item for all $x \in \ggk \RES[k]$ and all $y \in \ggk \Gamma[l]$,
  \begin{equation}\label{eb:eg:defn}
      \bb E_{g}(x \otimes y) = \chi_g(y) x [\G_m]^l[\A]^{-(k+l)} \dand
     \bb E_{b}(x \otimes y) = \chi_b(y)x [\G_m]^l [1]^{-(k+l)},
  \end{equation}
  where, for simplicity, $x \otimes y$ stands for the  element  $\Psi^{-1}(x \otimes y) \in \ggk \RV[*]$.
\end{itemize}
\end{prop}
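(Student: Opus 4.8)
The plan is to assemble $\bb E_g$ and $\bb E_b$ from their restrictions to the two tensor factors of $\ggk\RV[*]$, and then read off the three asserted properties.

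\emph{Reduction and construction.} By Proposition~\ref{red:D:iso}, $\Psi$ identifies $\ggk\RV[*]$ with $\ggk\RES[*]\otimes_{\ggk\Gamma^{\fin}[*]}\ggk\Gamma[*]$, so in each case it suffices to give a ring homomorphism out of $\ggk\RES[*]$ and one out of $\ggk\Gamma[*]$, into a common target, agreeing on the amalgamating subring $\ggk\Gamma^{\fin}[*]$ (embedded in $\ggk\RES[*]$ through the monomorphism of the commutative square preceding \eqref{tensor}, and in $\ggk\Gamma[*]$ tautologically); the induced map on the tensor product, precomposed with $\Psi^{-1}$, is the sought homomorphism. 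For $\bb E_g$, on $\ggk\RES[*]$ take $\lambda_g(x)=\iota(x)[\A]^{-k}$ for $x$ homogeneous of degree $k$, extended additively --- a ring homomorphism, since $\iota$ is one and multiplication in $\ggk\RES[*]$ adds degrees, whose image is exactly the zeroth graded piece of $\sggk\RES[*][[\A]^{-1}]$ (as $\iota$ is onto); on $\ggk\Gamma[*]$ take $\mu_g(y)=\chi_g(y)\,[\G_m]^{l}[\A]^{-l}$ for $y$ homogeneous of degree $l$, a ring homomorphism because $\chi_g$ is one (Notation~\ref{gam:euler}) and $[\G_m][\A]^{-1}$ is a central element of degree $0$, again landing in the zeroth graded piece. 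For $\bb E_b$ use the same formulas with $[\A]$ replaced throughout by $[1]\in\ggk\RES[1]$.

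\emph{Gluing over $\ggk\Gamma^{\fin}[*]$.} Since $\ggk\Gamma^{\fin}[*]$ is additively generated by the classes $[\{\gamma\}]$ of definable points $\gamma\in\Gamma^{l}$ (a definable finite set being a finite union of these, with $[\{\gamma\}]$ the product of its coordinate classes), it is enough to match the two factor maps there. The image of $[\{\gamma\}]$ in $\ggk\RES[*]$ is $[\gamma^{\sharp}]$, and by the very definition of the ideal $!I$ (Notation~\ref{res:note}) one has $\iota([\gamma^{\sharp}])=[\G_m]^{l}$ in $\sggk\RES[l]$, so $\lambda_g$ sends $[\{\gamma\}]$ to $[\G_m]^{l}[\A]^{-l}$; on the other hand $\chi_g$ takes the value $1$ on a point, so $\mu_g$ sends $[\{\gamma\}]$ to $[\G_m]^{l}[\A]^{-l}$ as well, and likewise with $[1]$ in place of $[\A]$. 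Hence $\lambda_g,\mu_g$ glue to $\bb E_g$ and $\lambda_b,\mu_b$ to $\bb E_b$; formula \eqref{eb:eg:defn} then holds by construction (with $x$ there denoting $\iota(x)$), and the assertion about ranges was already observed.

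\emph{Vanishing of $\bm P-1$.} Write $\bm P-1=[1]-[\RV^{\circ\circ}]-1$ in $\ggk\RV[*]$, with $[1]\in\ggk\RV[1]$ the image of $[1]\in\ggk\RES[1]$. Under $\Psi^{-1}$ one has $[1]=[1]_{\RES[1]}\otimes 1$, and, since $\bm H^{\sharp}=\RV^{\circ\circ}$, also $[\RV^{\circ\circ}]=1\otimes[\bm H]$ by \eqref{assgn:Psi}. Feeding these into \eqref{eb:eg:defn} together with $\chi_g(\bm H)=-1$ and $\chi_b(\bm H)=0$ (Notation~\ref{gam:euler}), the sum telescopes: for $\bb E_g$ the two nontrivial terms recombine via $[\A]=[\G_m]+[1]$ in $\ggk\RES[1]$ to give $[\A][\A]^{-1}=1$, which cancels the trailing $1$; for $\bb E_b$ the $[\RV^{\circ\circ}]$-term drops out because $\chi_b(\bm H)=0$, while $\iota([1]_{\RES[1]})[1]^{-1}=1$ cancels the trailing $1$. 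Thus $\bb E_g(\bm P-1)=\bb E_b(\bm P-1)=0$.

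\emph{The main obstacle.} The one genuinely delicate step is the gluing: one must use precisely how $\ggk\Gamma^{\fin}[*]$ embeds into $\ggk\RES[*]$ (so its generators land on the $[\gamma^{\sharp}]$) together with the design of the ideal $!I$ (so these become $[\G_m]^{l}$ after $\iota$). Everything else is bookkeeping, the remaining point demanding attention being the grading of $[\A]$, $[\G_m]$ and $[1]$ and its interaction with the localizations, which is exactly what makes the $\bm P-1$ computation come out to $0$.
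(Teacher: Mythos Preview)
Your proof is correct and follows the standard construction. The paper itself does not supply a proof for this proposition but simply refers to \cite[Theorem~10.5]{hrushovski:kazhdan:integration:vf}, \cite[\S~5.1]{HL:modified}, and \cite[\S~2.5]{hru:loe:lef}; your argument --- gluing factor maps over $\ggk\Gamma^{\fin}[*]$ via the universal property of the tensor product, with the ideal $!I$ designed precisely so that $[\gamma^\sharp]$ collapses to $[\G_m]$ --- is essentially what one finds in those references.
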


By the first clause, they may be regarded as homomorphisms on ${\ggk} \RV[*] / (\bm P - 1)$.

\begin{rem}\label{eg:eb:differ}
For a proper invariant object $A$ of $\VF_*$ of dimension $n$, the homomorphisms  $\bb E_g \circ \int$, $\bb E_b \circ \int$ only differ by a factor in $\sggk \RES[[\A]^{-1}]$; see \cite[Remark~7.28]{HL:modified}. Consequently, we have
\begin{equation}\label{off:fac:A}
\bb E_g \Big (\int [A] \Big ) = \bb E_b \Big (\int [A] \Big )[\A]^{-n}.
\end{equation}

In \S~\ref{sec:tcon}, the \T-convex versions of $\bb E_b \circ \int$, $\bb E_g \circ \int$ yield the Euler characteristics of the closed and  open topological Milnor fibers. Then (\ref{off:fac:A}) may be specialized to one between these two numerical quantities; see Corollary~\ref{link:open:closed}.\

More generally, (\ref{off:fac:A}) is a manifestation of the Bittner duality \cite{B1}; this will be explained elsewhere.
\end{rem}

There is also  a version for the doubly bounded category with volume forms:

\begin{prop}\label{prop:eu:retr:k:db}
There is a graded ring homomorphism
\[
  \mgE^{\db}: \ggk \mgRV^{\db}[*] \fun \sggk \RES[*]
\]
such that $(\bm P_\Gamma)$ vanishes and, for all $x \in \ggk \mgRES[*]$, $\mgE^{\db}(x) = \phi(x)$, where $\phi$ is the forgetful homomorphism $\phi : \ggk \mgRES[*] \fun \sggk \RES[*]$.
\end{prop}

See \cite[\S~4]{HL:modified} for the notation and a full explanation on how to obtain these results.

The composition of $\mgE^{\db}$ with the forgetful homomorphism $\sggk \RES[*] \fun \sggk \RES$ is denoted by $\bb E^{\diamond}$.  By \cite[Remark~4.14]{HL:modified}, the diagram commutes:
\begin{equation}\label{edb:eb:com}
\bfig
 \Vtriangle(0,0)/->`->`->/<400, 400>[{\ggk \mgRV^{\db}[*]}`{\ggk \RV[*]}`\sggk \RES; `\bb E^{\diamond}`\bb E_b]
\efig
\end{equation}
which may serve as an alternative and more direct (but less comprehensive) construction of $\bb E^{\diamond}$.

\begin{rem}\label{big:dia:up}
Appending (\ref{edb:eb:com}) to the left square in (\ref{diag-diamond-interpol}), we obtain the upper portion of (\ref{sum:narr}) (the homomorphism $\Theta$ is given in (\ref{hom:theta})).
\end{rem}

\begin{rem}\label{eb:not:eg}
The homomorphism $\bb E_b$ will be used in the construction of motivic Milnor fiber in \S~\ref{section:milnor}, but not  $\bb E_g$, because it does not quite commute with $\bb E^{\diamond}$ (say, by (\ref{off:fac:A}), $\bb E_g([1]) = [\A]^{-1}$ whereas $\bb E^{\diamond}([1]) = \bb E_b([1]) = 1$). It is intriguing to ponder how $\bb E_g$ fit in with the grand scheme of things (more on this in Remark~\ref{g:comm:alm} below).

For the Thom-Sebastiani formula in \S~\ref{section:TS} to hold, we must also use $\bb E_b$ (otherwise certain terms in the computation would not vanish, see Remark~\ref{fub:why:xb}).
\end{rem}

As in \cite{HL:modified}, we shall carry out computations (in \S~\ref{subsec:milnor} and throughout \S~\ref{section:TS}) using    reduced cross-section.  Recall that a \memph{cross-section} of $\Gamma$ is a group homomorphism $\csn : \Gamma \fun \VF^{\times}$ with $\vv \circ \csn = \id$. The corresponding \memph{reduced cross-section} of $\Gamma$ is the function $\rcsn = \rv \circ \csn : \Gamma \fun \RV$. If such a reduced cross-section exists then it induces an isomorphism $\RV  \cong \Gamma \oplus \K^\times$. In general this is not guaranteed, that is, the short exact sequence in (\ref{LRV:diag}) may not split (definably).

\begin{exam}\label{exam:pui:C}
We may think of the procyclic group $\hat {\mu} = \lim_n \mu_n$ as the Galois group $\gal(\puC / \C \dpar t)$, since they are canonically isomorphic. For each element $\xi = (\xi_n)_n \in \hat \mu$, the assignment $n \efun \xi_n t^{1/n}$ gives a reduced cross-section $\rcsn_\xi : \Q \cong \Gamma \fun \RV$, and the map given by $\xi \efun \rcsn_\xi$ is  a bijection between $\hat \mu$ and the set $\Omega$ of reduced cross-sections $\rcsn$ with $\rcsn(1) = \rv(t)$; in other words, $\hat \mu$ acts freely and transitively on $\Omega$ via multiplication.
\end{exam}

\begin{defn}\label{twistback}
Relative to a reduced cross-section $\rcsn$, the \memph{twistback} function $\tbk : \RV \fun \K$ is given by $u \efun u / \rcsn(\vrv(u))$, where $\infty / \infty = 0$. For any set $U \sub \RV^n_\infty$ and $\gamma \in \Gamma_{\infty}^n$, the set $\tbk(U_{\gamma}) \sub \K^n$ is called the \memph{$\gamma$-twistback} of $U$. If $\tbk(U_{\gamma}) = \tbk(U_{\gamma'})$ for all $\gamma, \gamma' \in \vrv(U)$ then $U$ is called a \memph{twistoid}, in which case we simply write $\tbk(U)$ for the unique twistback. A definable finite partition $(U_i)_i$ of $U$ is called a \memph{twistoid decomposition} of $U$ if every $U_i$ is a twistoid.
\end{defn}

The few facts on reduced cross-sections, twistbacks, and \memph{bipolar} twistoid decompositions (see \cite[Definition~5.10]{HL:modified}) that we shall need are all explained in \cite[\S~5]{HL:modified}.

\section{Motivic Milnor fiber}\label{section:milnor}

In this section we first specialize the integration theory above to henselian subfields of $\bb U$, such as the field $\puR$ of  real Puiseux series. This is followed by a discussion on how to construct various Grothendieck rings, especially those in real algebraic geometry. We then adapt  \cite[\S~8]{HL:modified} to the current context and show that   the real motivic zeta function and thence the motivic Milnor fiber can be recovered from an object of  $\VF_*$, namely the nonarchimedean Milnor fiber. Our method, when combined with the virtual Poincar\'e polynomial of real varieties, yields a new invariant.

\subsection{Specialization to henselian subfields}\label{section:spec:hen}

The descent procedure described here is based on \cite[\S~12]{hrushovski:kazhdan:integration:vf}. We work in $\bb U$ with the parameter space $\bb S = \R \dpar{ t }$.

It may seem at first glance that we may as well take $\bb S = \puR$  since every element in $\puR$ is, after all, definable over $\R \dpar t$. However, generally speaking, elements in $\puR$ are definable over $\R \dpar t$ only in $\puR$, not in $\bb U$, in other words, they are not quantifier-free definable over $\R \dpar t$ in $\puR$ (to define them one needs to use the ordering, which is not quantifier-free definable, and this point will come up again in \S~\ref{sec:tcon}).

Let $\bb M$ be a  substructure of $\bb U$ in which the map $\rv$ is surjective. Recall that the substructure $\bb S = \R \dpar{ t }$ is regarded as a part of the language and hence all other substructures contain it. The main case of interest is $\bb M = \puR$ (see also \cite[\S~8.1]{HL:modified}).

If $X \sub \VF^n  \times \RV^m$ is a definable (and hence quantifier-free definable) set then the \memph{trace} of $X$ in $\bb M$, denoted by $X(\bb M)$, is the set of $\bb M$-rational points of $X$, that is,
\[
X(\bb M) = X \cap (\VF(\bb M)^n  \times \RV(\bb M)^m).
\]
Such a trace is also called a \memph{constructible set} in $\bb M$ since it is indeed quantifier-free definable in $\bb M$. Note that, however, if $f : X \fun \Gamma$ is a definable function then the image $f(X(\bb M))$  is not necessarily a set in $\Gamma(\bb M)$, but rather a set in the divisible hull $\Q \otimes \Gamma(\bb M)$  of $\Gamma(\bb M)$. For instance, if $\bb M = \C \dpar t$ then $\Gamma(\bb M) = \Z$ and hence $\gamma \in \Gamma$ is definable if and only if $\gamma \in \Q \otimes \Gamma(\bb M) = \Q$. On the other hand, if $X$ is a set in $\Gamma$ and $f$ is a piecewise $\mgl_k(\Z)$-transformation on $X$ then $f(X(\bb M))$ is of course a set in $\Gamma(\bb M)$; this is the situation in the $\Gamma$-categories.

Assume  that the valued field $(\VF(\bb M), \OO(\bb M))$  is henselian. This is equivalent to the condition that $\bb M$ is definably closed and $\Gamma(\bb M)$ is nontrivial (see \cite[Example~12.8]{hrushovski:kazhdan:integration:vf} or  \cite[Lemma~2.11]{HL:modified}). It follows that $\bb M$ is \memph{functionally closed}, that is,  for any definable set $X$ and any definable function $f$ on $X$ (no $\Gamma$-coordinates are allowed), the image $f(X(\bb M))$ is a set in $\bb M$ and hence  is definable (constructible)  in $\bb M$, or more concisely, $f(X(\bb M)) = f(X)(\bb M)$.

The rest of this subsection is devoted to explaining the middle portion of (\ref{sum:narr}):
\begin{equation}\label{hen:sum}
\bfig
  \hSquares(0,0)/->`->`->`->`->`->`->/<400>[{\ggk \VF_*}`{\ggk \RV[*]/(\bm P - 1)}`{\sggk \RES}`{\ggk \VF_{\bb M}}`{\ggk \RV_{\bb M}[*]/(\bm P - 1)}`{\sggk \RES_{\bb M}}; \int`\bb E_b`-/ {\gF_{\bb M}}`-/{\gR_{\bb M}}`-/{\gR_{\bb M}}`\int_{\bb M}`\bb E_{b, \bb M}]
\efig
\end{equation}
where $\bb E_{b}$, $\bb E_{b, \bb M}$ can be replaced by  $\bb E_{g}$, $\bb E_{g, \bb M}$ if $[\A]$ is inverted in the right column.

\begin{defn}[$\bb M$-constructible categories]\label{def:hen:cat}
An object of the category $\RV_{\bb M}[k]$ is a pair of the form $\bm U(\bb M) = (U(\bb M),  f \rest U(\bb M))$, where $\bm U = (U, f) \in \RV[k]$. Any constructible function of the form $F(\bb M) : \bm U(\bb M) \fun \bm V(\bb M)$, where $F : \bm U \fun \bm V$ is a $\RV[k]$-morphism, is a morphism of $\RV_{\bb M}[k]$.

The categories $\VF_{\bb M} = \bigcup_k \VF_{\bb M}[k]$, $\Gamma_{\bb M}[k]$, $\RES_{\bb M}[k]$, etc., are formulated analogously.
\end{defn}

We call $\gsk \VF_{\bb M}$,  etc.,  \memph{$\bb M$-constructible Grothendieck semirings} associated with $\bb M$.

Since $\bb M$ is functionally closed, the following binary relation is well-defined and is indeed a semiring congruence relation:
\[
\gF_{\bb M} = \set{([A], [B]) \in (\gsk \VF_*)^2 \given [A(\bb M)] = [B(\bb M)] \text{ in } \gsk \VF_{\bb M} }.
\]
The semiring congruence relations $\gR_{\bb M} \sub (\gsk \RV[*])^2$,  $\gG_{\bb M} \sub (\gsk \Gamma[*])^2$ are defined analogously. The restriction of $\gR_{\bb M}$ to $(\gsk \RES[*])^2$ and the corresponding ideal of $\sggk \RES$ are both still denoted by $\gR_{\bb M}$. We have $\gsk \VF_{\bb M} \cong {\gsk \VF_*} / \gF_{\bb M}$, etc.

Suppose that $([A], [B]) \in \gF_{\bb M}$. By the definition of  $\VF_{\bb M}$, we can find a  $\VF_*$-morphism  $F : A'\fun B'$ with $A(\bb M) \sub A' \sub A$ and $B(\bb M) \sub B' \sub B$ that witnesses this (there may not exist a $\VF_*$-morphism between $A$ and $B$, though). Let $A'' = A \mi A'$ and $B'' = B \mi B'$. By Theorem~\ref{main:prop}, there are $U, V \in \RV[*]$ such that $[A''] = [{\bb L}{\bm U}]$ and $[B''] = [{\bb L}{\bm V}]$. By functional closedness, if $\bm U(\bb M) \neq \0$ then ${\bb L}\bm U (\bb M) \neq \0$ and hence $A''(\bb M) \neq \0$, which contradicts the choice of $A''$. So $\bm U(\bb M) = \0$ and similarly $\bm V(\bb M) = \0$. This means that $([\bm U], [\bm V]) \in \gR_{\bb M}$, in other words, $\int_+ [A''] = \int_+ [B'']$ modulo $\gR_{\bb M}$. Therefore,
\[
\int_+ [A] = \int_+ [A'] + \int_+ [A''] =_{\gR_{\bb M}} \int_+ [B'] + \int_+ [B''] = \int_+ [B].
\]
Conversely, by a similar reasoning, if $([\bm U], [\bm V]) \in \gR_{\bb M}$ then ${\bb L}{\bm U}(\bb M)$, ${\bb L}{\bm V}(\bb M)$ are isomorphic in $\VF_{\bb M}$, in other words, $([{\bb L}{\bm U}], [{\bb L}{\bm V}]) \in \gF_{\bb M}$. So $\int$ induces an isomorphism $\int_{\bb M}$ between $\ggk \VF_{\bb M}$ and $\ggk \RV_{\bb M}[*]/(\bm P - 1)$.

Next, let $\gR_{\bb M} \otimes \gG_{\bb M}$ be the semiring congruence relation on $\gsk \RES[*] \otimes \gsk \Gamma[*]$ generated by $\gR_{\bb M}$ and $\gG_{\bb M}$. By the universal mapping property of tensor product, there exists a canonical isomorphism
\[
\gsk \RES[*] \otimes \gsk \Gamma[*] / \gR_{\bb M} \otimes \gG_{\bb M} \cong \gsk \RES_{\bb M}[*] \otimes_{\gsk \Gamma^{\fin}_{\bb M}[*]} \gsk \Gamma_{\bb M}[*].
\]
So the assignment (\ref{assgn:Psi})  induces a $\gsk \Gamma^{\fin}_{\bb M}[*]$-linear map
\begin{equation*}
  \Psi_{\bb M}: \gsk \RES[*] \otimes \gsk \Gamma[*] / \gR_{\bb M} \otimes \gG_{\bb M} \fun \gsk \RV[*] / \gR_{\bb M} \cong \ggk \RV_{\bb M}[*].
\end{equation*}
Functional closedness and Proposition~\ref{red:D:iso} show that the two semiring congruence relations match exactly via $ \Psi_{\bb M}$ and hence $\Psi_{\bb M}$ is an isomorphism as well.

Finally,  suppose that $\Gamma(\bb M)$ is divisible,  for instance, $\bb M=\puR$. By \omin-minimal cell decomposition and induction on dimension,  for any $I, J \in \Gamma [*]$, if $I(\bb M) = J(\bb M)$ then $\chi_b(I) = \chi_b(J)$ and $\chi_g(I) = \chi_g(J)$, and hence this is so if, more generally, $([I], [J]) \in \gG_{\bb M}$. So $\ggk \Gamma_{\bb M}[*]$ also admits two ring homomorphisms into $\Z$ and the assignment (\ref{eb:eg:defn}) yields two ring homomorphisms $\bb E_{g, \bb M}$, $\bb E_{b, \bb M}$ from $\ggk \RV_{\bb M}[*]/(\bm P - 1)$ into $\sggk \RES_{\bb M}$.


\subsection{Grothendieck rings in real and complex geometry}\label{sect:Groth:real}

The complexification of a  variety $X$ over $\R$ is denoted by $X \otimes_{\R}\C$, which is a variety over $\C$ endowed with an antiholomorphic involution coming from the complex conjugation $\bm c$ over $\C$; the Grothendieck ring of the corresponding category is denoted by $\ggk^{\bm c} \Var_{\C}$. Conversely, to every quasi-projective variety $Y$ over $\C$ endowed with an antiholomorphic involution there corresponds a unique variety $X$ over $\R$ such that $Y \cong X \otimes_{\R}\C$. So extension of scalars induces an isomorphism $\ggk\Var_{\R} \to \ggk^{\bm c}\Var_{\C}$.

Taking the fixed points of the set $X(\C)$ of the complex points of a variety $X$ over $\R$ under the complex conjugation gives a real
variety in the sense of \cite{BCR}; this is denoted by $X(\R)$. Such sets of real points of varieties over $\R$, considered with their sheaves of regular functions over $\R$, form the category $\RVar$ of real  varieties, and taking  real points induces a surjective homomorphism $\ggk\Var_{\R} \to \ggk{\RVar}$ with kernel generated by the varieties without real points (see \cite[Corollary~1.11]{GF:MA}).

We consider also an equivariant version of the Grothendieck ring of complexified varieties over $\R$, taking into account  group actions by roots of unity that are compatible with the complex conjugation.

\begin{nota}
Denote the dihedral group $\gal(\C / \R) \ltimes \mu_n$ by $\delta_n$, where the $\gal(\C / \R)$-action  on $\mu_n$ corresponds to taking the inverse. Set $\hat {\delta} = \lim_n \delta_n$, which is canonically isomorphic to $\gal(\puC / \puR) \ltimes \hat \mu$, where the action of $\gal(\puC / \puR)$ on $\hat \mu$ corresponds again to taking the inverse. It may also be identified with the Galois group $\gal(\puC / \R \dpar t)$. More explicitly, if $\sigma = (\sigma_n)_{n\geq 1}$ is a coherent system of roots of unity, then it acts on a complex Puiseux series $\phi=\sum_k a_k t^{k/m}$ by
$\sigma \cdot \phi=\sum_k a_k \sigma_m^k t^{k/m}$. The conjugation automorphism of $\puC$ is also denoted by $\bm c$. It acts on $\phi$ by $\bm c \cdot \phi= \sum_k \overline a_k t^{k/m}$.
\end{nota}

\begin{lem}\label{chch} For any element $\sigma$ of $\hat \mu$, the relation $\bm c \sigma \bm c \sigma = 1$ holds in $\hat \delta$.
\end{lem}

\begin{proof} Denote by $\xi=(\xi_n)_{n\geq 1}$ the topological generator of $\hat \mu$ given by $\xi_n=\exp ({2i\pi /n})$. So the orbit of $\xi$ is dense in the topological group $\hat \mu$. The equality $\overline{\xi_n \overline{\xi_n z}} =z$ holds for any $n\geq 1$ and any  $z \in \C$, which implies that $\bm c \xi \bm c \xi = 1$ in $\hat \delta$. Moreover, for any integer $m \geq 0$,
\[
\bm c \xi^{m+1} \bm c  \xi^{m+1} =\bm c \xi^{m} (\xi \bm c \xi) \xi^m =\bm c \xi^{m} \bm c (\bm c \xi \bm c  \xi )\xi^{m} = \bm c \xi^{m} \bm c \xi^{m},
\]
and hence, by induction, the relation $\bm c \xi^{m} \bm c \xi^{m} = 1$ holds for any $m$. So the relation $\bm c \sigma \bm c \sigma = 1$ holds for any element of $\hat \mu$ because $\xi$ is a topological generator of $\hat \mu$.
\end{proof}

\begin{defn}\label{ct:var:over:R}
A $\hat \delta$-action $\hat h$ on a complexified variety $X$ over $\R$ is \memph{good} if it factors through some $\delta_n$-action and the induced $\gal(\C / \R)$-action is the canonical antiholomorphic involution.

The category of complexified varieties over $\R$ with good $\hat \delta$-actions consists of objects of the form $\bm X = (X, \hat h)$, where $X$ is a complexified quasi-projective variety over $\R$ and $\hat h$ is a good $\hat \delta$-action on $X$, and $\hat \delta$-equivariant morphisms between such objects.

The Grothendieck ring of this category is denoted by  $\ggk^{\flat, \hat \delta}\Var_{\R}$. The ring $\gdv$ is the quotient of $\ggk^{\flat, \hat \delta}\Var_{\R}$ by the ideal generated by the elements of the form
\begin{equation}\label{compl:flat}
[\bm X \times ( \A^n_\C, \hat h)] - [\bm X \times (\A^n_\C, \bm c)],
\end{equation}
where $\hat h$ is a good linear $\hat \delta$-action.
\end{defn}

\begin{rem}\label{rem-inv}
Let $\bm X = (X, \hat h)$ be a complexified variety over $\R$ with a good $\hat \delta$-action. Then $\hat h$ and the group involution of $\hat \delta$ together induce another good $\hat \delta$-action $\hat h'$ on $X$. This means that there is a natural ring involution of  $\ggk^{\flat, \hat \delta}\Var_{\R}$ and also $\gdv$.
\end{rem}

An arc $\spec \C \dbra t \fun X$ on a variety $X$ over $\C$ may have branches, which are represented by complex Puiseux series in $\puC$. Galois actions over $\C \dbra t$ on these branches encode certain information on the singularity in question and hence are an integral part of the construction in \cite{hru:loe:lef}. These Galois actions are gone when we restrict to real branches of real arcs, corresponding to the  pair $\puR$ and $\R \dbra t$, albeit a faint trace remains.

\begin{rem}\label{shat:section}
We have seen in Example~\ref{exam:pui:C} above that there is a natural bijection between $\hat \mu$ and the set of reduced cross-sections $\rcsn : \Q \fun \RV$ with $\rcsn(1) = \rv(t)$ in $\puC$. Similarly, there is such a bijection between $\hat \delta$ and such a set but with $\rcsn(1) = \rv(\pm it)$.

In contrast, there is only one such reduced cross-section in $\puR$, which is but another way of saying the fact that $\gal(\puR / \R \dpar t)$ is trivial. Nevertheless, if $n$ is even then $\gal(\R \dpar{t^{1/n}} / \R \dpar t) \cong \mu_2$, and there are two such reduced cross-sections in $\R \dpar{t^{1/n}}$, determined by the two choices $\pm t^{1/n}$, and if $n$ is odd then there is only one.
\end{rem}

\begin{defn}\label{Rvar:mu2}
The category of real varieties with $\mu_2$-actions consists of objects of the form $\bm X = (X, h)$, where $X$ is a real variety and $h$ is a $\mu_2$-action on $X$, and $\mu_2$-equivariant morphisms between such objects (the actions and morphisms are all given by regular maps in the sense of \cite{BCR}).

The Grothendieck ring of this category is denoted by $\ggk^{\flat,\mu_2}\RVar$. The ring $\gsv$ is the quotient of $\ggk^{\flat,\mu_2}\RVar$ by the ideal generated by the elements of the form
\begin{equation}\label{real:flat}
 [\bm X \times ( \A^n_\R, h)] - [\bm X \times (\A^n_\R, \id)],
\end{equation}
where $h$ is any linear $\mu_2$-action.
\end{defn}

Let $(X,\hat h)$  be a complexified variety over $\R$ with a good $\hat \delta$-action, factoring through a $\delta_n$-action.

\begin{rem}
Since $\mu_n$ is cyclic, the induced $\hat \mu$-action on the $\hat h$-orbit  of any (closed) point $x \in X$  factors through a faithful $\mu_{d_x}$-action with $d_x | n$. Thus, $\hat h$ factors through a $\delta_{d_x}$-action $h_{d_x}$.

For all $\sigma \in \mu_{d_x} < \delta_{d_x}$, since $\bm c\sigma\bm c \sigma = 1$, we have $\bm c h_{d_x}(\sigma) \bm c h_{d_x}(\sigma)(x) = x$. Moreover, if $x$ is a real closed point, that is, if $x \in X(\R)$, then $h_{d_x}(\sigma) \bm c h_{d_x}(\sigma)(x) = x$.
In particular, if $d_x$ happens to be even and if we choose $\sigma \in \mu_2 < \delta_{d_x}$ (so that $h_{d_x}(\sigma)^2=1$), then
$\bm c h_{d_x}(\sigma)(x) = h_{d_x}(\sigma)(x)$, meaning that $h_{d_x}(\sigma)(x)$ is a real closed point as well.
\end{rem}

\begin{defn}\label{def:mu2:var}
The $\mu_2$-action $\hat h(\R)$ on $X(\R)$ is given by $x \efun h_{d_x}(\sigma)(x)$ for $\sigma \in \mu_2$ if $d_x$  is even and $x \efun x$ otherwise.
\end{defn}

See  Remark~\ref{mu2:failure} for an illuminating example.

\begin{lem}\label{mu2:var}
The assignment $(X,\hat h)\efun (X(\R),\hat h(\R))$ induces a surjective group homomorphism
\begin{equation}\label{down:mu2}
 \Xi : \ggk^{\hat \delta}\Var_{\R} \fun \gsv.
\end{equation}
\end{lem}
\begin{proof}
If the induced $\mu_n$-action is faithful on $X$, then $(X(\R),\hat h(\R))$ is a real variety with a $\mu_2$-action in the sense of Definition \ref{Rvar:mu2}. Otherwise, we stratify $X(\R)$ into Zariski constructible subsets on which the $\mu_2$-action is regular as follows. For any positive integer $d$ dividing $n$, the set $X_d$ of fixed points of the $\mu_d$-action on $X$ induced by the morphism $\mu_n \fun \mu_d : \sigma \efun \sigma^{n/d}$ is a complexified variety over $\R$ with a $\delta_{n/d}$-action. Since the $\mu_{n/d}$-action is faithful on  $X_d' = X_d \mi \bigcup_{d | l| n, d < l} X_l$, the $\mu_2$-action induced on the Zariski constructible subset $X_d'(\R)$ of $X(\R)$ is regular as expected.

So  summing up the classes of these constructible sets with $\mu_2$-actions makes sense in $\gsv$. Since isomorphic complexified varieties $\bm X$, $\bm Y$ over $\R$ with $\hat \delta$-actions give rise to isomorphic constructible sets with $\mu_2$-actions, and (\ref{compl:flat}), (\ref{real:flat}) are essentially the same condition, it follows that $\Xi$ is well-defined, and the assignment $[\bm X] \efun [\bm X(\R)]$ does respect addition by construction. For surjectivity, only note that a $\mu_2$-action that is regular on a real variety can be linearized and hence gives rise to a complexified variety over $\R$ with a $\delta_2$-action in a natural way.
\end{proof}

Note, however, that $\Xi$ fails to respect product and hence is not a ring homomorphism: if $x$, $y$ are two points belonging to  $\bm X$, $\bm Y$ then $d_{(x,y)} = \gcd(d_x, d_y)$ and hence the $\mu_2$-action on $(x, y)$ as a point belonging to $\bm X \times \bm Y$ is not necessarily the product of the   $\mu_2$-actions on $x$, $y$. On the other hand, in light of (\ref{compl:flat}) and (\ref{real:flat}), it can be upgraded to an $\mdl A_{\C}$-module homomorphism via the natural ring homomorphism $\mdl A_{\C} \fun \mdl A_{\R}$, where $\mdl A_{\C}$ is the subring of $\gdv$ generated by $[(\A_{\C}, \bm c)]$ and $\mdl A_{\R}$ is the subring of $\gsv$ generated by $[\A_{\R}]$.

A similar construction at the level of $\ggk^{\flat ,\hat \delta}\Var_{\R}$ instead of $\ggk^{\hat \delta}\Var_{\R}$ yields a group homomorphism $\Xi^\flat : \ggk^{\flat ,\hat \delta}\Var_{\R} \fun \ggk^{\flat,\mu_2}{\RVar}$.

\begin{figure}[htb]
\begin{equation}\label{tauhat:forget}
\bfig
 \iiixiii(0,0)/..>`->`<..`<-`<-`->`->`->`->`->`->`->/<800, 400>[{\ggk^{\flat, \mu_2}{\RVar}}`{\gsv}`{\ggk{\RVar}}`{\ggk^{\flat,\hat \delta}\Var_{\R}}`{\ggk^{\hat \delta}\Var_{\R}}`{\ggk\Var_{\R}}`{\ggk^{\flat, \hat \mu}\Var_{\C}}`{\ggk^{\hat \mu}\Var_{\C}}`{\ggk\Var_{\C}}; ``\Xi^\flat`\Xi`\Xi_{\R}``\Phi`````]
\efig
\end{equation}
\end{figure}
In \cite{hru:loe:lef}, similar Grothendieck rings $\ggk^{\flat, \hat \mu}\Var_{\C}$, $\ggk^{\hat \mu}\Var_{\C}$ are defined for categories of varieties over $\C$. We summarize the situation in the diagram (\ref{tauhat:forget}), where the right column of horizontal arrows are the forgetful homomorphisms,  the first row of vertical arrows are obtained by taking real points, and the second row of vertical arrows are obtained by  forgetting the  antiholomorphic (that is, the  real) structure. This diagram does commute except for the upper left square:  the two routes from $\ggk^{\flat,\hat \delta}\Var_{\R}$ to $\gsv$ are actually not identical  (see Remark~\ref{mu2:failure}), and the construction below uses the one that passes through $\Xi$.

\begin{rem}[Polynomial realizations]\label{rem:beta}
There is a unique ring homomorphism $\beta: \ggk \RVar \fun \Z[u]$ that coincides with the Poincar\'e polynomial
$\sum_{i \in \N} \dim H_i(X,\F_2)u^i$ for compact nonsingular real varieties $X$; see \cite{mccrory:paru:virtual:poin}. Similarly, there is a unique group homomorphism
\[
\beta^{\mu_2}: \gsv \fun \Z \dbra{u^{-1}}[u]
\]
that coincides with the equivariant Poincar\'e series
$\sum_{i\in \Z} \dim H_i(\bm X, \F_2)u^i$ for compact nonsingular real varieties $\bm X$ endowed with $\mu_2$-actions; see \cite{Fseries}. We will say more about the virtual Poincar\'e polynomial in \S~\ref{vPp}.
\end{rem}

\subsection{Piecewise retraction to $\RES$}

We now proceed to replicate the construction in \cite[\S~8]{hru:loe:lef} so to recover the motivic zeta function with coefficients in $\gdv[[\A]^{-1}]$ and the corresponding motivic Milnor fiber; many arguments are formally the same, hence we shall be brief. The subsequent specialization to $\gsv[[\A]^{-1}]$  is new and points to deeper phenomena in the real algebraic environment.

For the remainder of this section we  work in $\puC$ with $\bb S = \R \dpar t$.

As in  \cite[\S~4.3]{hru:loe:lef}, using the  twistback function given by a chosen reduced cross-section $\rcsn$, we construct a homomorphism
\begin{equation}\label{hom:theta}
\Theta : \sggk \RES \fun \gdv.
\end{equation}
This does not depend on the choice of $\rcsn$.

\begin{rem}\label{first:theta}
Several variants of $\Theta$ will appear below. To show that they are  injective, we may simply follow the argument in the proof of \cite[Proposition~4.3.1]{hru:loe:lef}. For surjectivity, however, some modification is needed, and how much of it is needed varies.

For $\Theta$ it is quite simple. Let $[(X,\hat h)] \in \ggk^{\hat \delta}\Var_{\R}$ with $\hat h$ factoring through a $\delta_n$-action. We may assume that $X$ is quasi-projective and irreducible. The induced $\mu_n$-action on $X$ gives a quotient variety $X / \mu_n$, which is also quasi-projective and carries an antiholomorphic involution and hence is defined over $\R$. Then the Kummer-theoretic construction in the proof of \cite[Proposition~4.3.1]{hru:loe:lef} yields a $U \in \RES$ with $\Theta([U]) = [(X, \hat h)]$.

The situation in $\puR$ is  trickier (even for injectivity, due to how the category $\RES_{\puR}$ is formulated in Definition~\ref{def:hen:cat}). Let $U \in \RES_{\puR}$. For each $u \in U$, let $d_u$ be the least positive integer such that $u$ is a tuple in $\RV(\R \dpar{t^{1/d_u}})$, or equivalently, $\vrv(u) \in d_u^{-1}\Z$. As implied by Remark~\ref{shat:section},  there is a nontrivial $\mu_2$-action on a two-element set $\{u, u'\} \sub U$ if $d_u$ is even. Thus, similar to Definition~\ref{def:mu2:var}, we can construct a $\mu_2$-action on $U$ by $u \efun u'$  if $d_u$ is even and $u \efun u$ otherwise. If we choose the reduced cross-section  that is  in $\puR$ then the twistback function yields a homomorphism
\begin{equation}\label{theta:puR}
  \Theta_{\puR} : \sggk \RES_{\puR} \fun \gsv
\end{equation}
such that the diagram
\begin{equation}\label{real:RV:com}
\bfig
\Square(0,0)/->`->`->`->/<400>[{\sggk \RES}`{\gdv}`{\sggk \RES_{\puR}}`{\gsv}; \Theta`\Xi_{\puR}`\Xi`{\Theta_{\puR}}]
\efig
\end{equation}
commutes, where $\Xi_{\puR}$ is just the map $-/{\gR_{\puR}}$ with $\bb M = \puR$ in (\ref{hen:sum}), but has to be treated as an $\mdl A_{\C}$-module homomorphism instead of a ring homomorphism, for the same reason that  $\Xi$ has to be treated as such. So $\Theta_{\puR}$ must be surjective. Comparing the constructions of $\Xi_{\puR}$ and $\Xi$, we see that $\Theta_{\puR}$ must be injective as well.
\end{rem}

\begin{rem}\label{note:vol}
Appending (\ref{real:RV:com}) to (\ref{hen:sum}) with $\bb M = \puR$, we derive  two volume operators
\[
\Vol^{\mu_2}: \ggk \VF_*\to^{\Vol^{\hat \delta} = \Theta \circ \bb E_{b} \circ \int} \gdv \to^{\Xi} \gsv
\]
and
\[
\Vol_{\puR}^{\mu_2} : \ggk \VF_{\puR} \to^{\Theta_{\puR} \circ \bb E_{b, \puR} \circ \int_{\puR}} \gsv,
\]
satisfying $\Vol^{\mu_2} = \Vol^{\mu_2}_{\puR} \circ (- /\gF_{\puR})$. Also, according to \cite[Notation~5.14]{HL:modified},  for any $\bm U \in \RV[*]$ and any bipolar twistoid decomposition $(U_i)_i$ of $\bm U$, we have
\begin{equation}\label{twist:C}
  (\Theta \circ \bb E_b)([\bm U]) = \sum_i \chi_b(\vrv(U_i))[\tbk(U_{i})]^{\hat \delta} \in \gdv,
\end{equation}
where $[\tbk(U_i)]^{\hat \delta} \in \gdv$ is just $\Theta([U_i \cap \gamma^\sharp])$ for any $\gamma \in I_i = \vrv(U_i)$, but is a more useful notation since it is computationally more suggestive.

We can also consider changing $\bb E_b$ to $\bb E_g$ (see Remark~\ref{g:comm:alm}).
\end{rem}

For any ring $R$, let $R[T^{\Q}]$ denote the ring of Puiseux polynomials over $R$, that is, the group ring of $\Q$ over $R$.
Consider the subring $\gdv[[\A]^{-1}][T, T^{-1}]$ of $\gdv[[\A]^{-1}][T^{\Q}]$. The canonical image of $\gdv[T^{\Q}]$ in  $\gdv[[\A]^{-1}][T^{\Q}]$ is still denoted as such. The assignment $T \efun [\A]$ determines a ring homomorphism
\begin{equation}\label{pui:laurent}
 \bm \eta: \gdv[[\A]^{-1}][T, T^{-1}] \fun \gdv[[\A]^{-1}].
\end{equation}

Let $\bm U = (U, f, \omega) \in \mgRV^{\db}[k]$. Following the discussion leading to \cite[(8.2)]{HL:modified}, using the notation therein in particular, we assign to $\bm U$ the expression
\begin{equation}\label{hm:ass}
  h_m(\bm U) = \sum_i [\tbk(U_i)]^{\hat \delta} \sum_{\gamma \in I_{i,m}} T^{-m \sigma_i(\gamma)} \in \gdv[[\A]^{-1}][T^{\Q}].
\end{equation}

The ``piecewise retraction'' formula (\ref{hm:ass}) is indeed quite similar to the ``uniform retraction'' formula (\ref{twist:C}). The difference is just that the coefficient $\chi_b(I_i)$ in the latter is replaced by a formal ``counting'' sum $\sum_{\gamma \in I_{i,m}} T^{-m \sigma_i(\gamma)}$ in the former, which also takes the volume form into account. That the ``uniform retraction'' formula does not depend on the choice of the bipolar twistoid decomposition is simply a consequence of the construction of $\bb E_b$ itself. That  (\ref{hm:ass}) does not depend on the choice of the bipolar twistoid decomposition and is invariant on isomorphism classes needs an argument, see the proof of \cite[Lemma~8.2]{HL:modified} (it is here  that we need functional closedness in $\C \dpar{ t^{1/m} }$). Therefore, we have constructed a ring homomorphism
\[
\bm h_m : \ggk \mgRV^{\db}[*] \fun  \gdv[[\A]^{-1}][T^{\Q}].
\]

Let $\ggk^\natural_m \mgRV^{\db}[*]$ denote the subring $(\bm h_m)^{-1}(\gdv[[\A]^{-1}][T, T^{-1}])$ of $\ggk \mgRV^{\db}[*]$. By the proof \cite[Lemma~8.3]{HL:modified}, the homomorphism
\[
\bm \eta \circ \bm h_m : \ggk^\natural_m \mgRV^{\db}[*] \fun \gdv[[\A]^{-1}]
\]
vanishes on $(\bm P_{\Gamma})$. Note that the ideal $(\bm P_\Gamma)$ of $\ggk \mgRV^{\db}[*]$ in Notation~\ref{pgamma} is now generated by the elements $\bm P_\gamma$ with $\gamma \in \Z^+ = \Gamma^+(\R \dpar t)$.

\begin{rem}\label{dia:to:db:dag}
If $\bm U = (U, f, l) \in \mgRV^{\db}[*]$ with $l \in m^{-1}\Z$ (constant volume form) then the exponents in (\ref{hm:ass}) are all integers and hence $[\bm U] \in \ggk^\natural_m \mgRV^{\db}[*]$. Actually we shall only need the case $l = 0$.

The ring $\bigcap_{m \in \Z^+} \ggk^\natural_m \mgRV^{\db}[*]$ is denoted by $\ggk^\natural \mgRV^{\db}[*]$.

If $\bm A = (A, l) \in \mgVF^\diamond[*]$ with $l$  constant then $\int^\diamond [\bm A]$ may be expressed as $[(U, f, l)]/ (\bm P_{\Gamma})$, and hence if $l \in \Z$ then $\int^\diamond [\bm A]$ belongs to the quotient $\ggk^\natural \mgRV^{\db}[*]/ (\bm P_{\Gamma})$. In that case,  for every $m \in \Z^+$, the expression $(\bm \eta \circ \bm h_m \circ \int^\diamond)([\bm A]) \in \gdv[[\A]^{-1}]$ makes sense  since $\bm \eta \circ \bm h_m$ vanishes on $(\bm P_{\Gamma})$.
\end{rem}

Denote by $\RES_{m}$ the full subcategory of $\RES$ such that $U \in \RES_{m}$ if and only if every $\gamma \in \vrv(U)$ is a tuple in $m^{-1} \Z$, or equivalently, $U \in \RES_{m}$ if and only if the action on $U$ of the kernel of the canonical projection $\hat \delta \fun \delta_m$ is trivial.

Let $\beta = (\beta_1, \ldots, \beta_n) \in (m^{-1} \Z)^n$ and $A \sub \OO^n \times \RV^l$ be a proper $\beta$-invariant definable set such that $\pvf \rest A$ is finite-to-one. Then we may associate with $A$ an element $[A[m;\beta]][\A]^{- m\sum \beta }$ in $\ggk \RES_{m}[[\A]^{-1}]$, see \cite[\S~4.2]{hru:loe:lef} for detail. By \cite[Lemma~4.2.1]{hru:loe:lef}, it actually does not depend on $\beta$ by \cite[Lemma~4.2.1]{hru:loe:lef} and hence may be denoted by $\tilde A[m]$.

The localization of $\Theta$ at $[\A]$ is still denoted by $\Theta$. The image of $\tilde A[m]$ in $\sggk \RES[[\A]^{-1}]$ is still denoted as such. The following result is crucial for recovering the motivic zeta function.

\begin{lem}\label{inv:dir:com}
$(\bm \eta \circ \bm h_m \circ \int^\diamond)([(A, 0)]) = \Theta (\tilde A[m])$ in $\gdv[[\A]^{-1}]$.
\end{lem}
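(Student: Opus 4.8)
The plan is to recognize both sides of the claimed identity as ring homomorphisms and to check that they coincide on a convenient set of generators. On the right, the assignment $[(A,0)]\mapsto(\Theta\circ\iota)(\tilde A[m])$ is additive over disjoint unions and invariant under $\mgVF^\diamond$-isomorphism by \cite[\S~4.2]{hru:loe:lef} (a relatively unary proper covariant morphism together with its inverse induces, away from subsets of smaller $\VF$-dimension that do not contribute to $A[m;\beta]$, an isomorphism of the associated truncated-arc varieties compatible with their $\delta_m$-actions), and it is multiplicative because the arc variety of a product is the product of the arc varieties and truncation commutes with products; since $\mgL$ is a homomorphism whose kernel is $(\bm P_\Gamma)$, this assignment descends to a ring homomorphism on $\ggk\mgRV^{\db}[*]/(\bm P_\Gamma)$ via $[\bm U]\mapsto(\Theta\circ\iota)\big(\widetilde{\bb L\bm U}[m]\big)$. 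On the left, $[(A,0)]\mapsto(\bm\eta\circ\bm h_m\circ\int^\diamond)([(A,0)])$ is a ring homomorphism by Remark~\ref{dia:to:db:dag} together with Lemma~\ref{PG:van} (which guarantees that $\bm\eta\circ\bm h_m$ descends modulo $(\bm P_\Gamma)$, where $\int^\diamond[(A,0)]$ lands). As $\int^\diamond$ inverts the isomorphism induced by $\mgL$, it therefore suffices to verify that $\bm\eta\circ\bm h_m$ and the composite $[\bm U]\mapsto(\Theta\circ\iota)\big(\widetilde{\bb L\bm U}[m]\big)$ agree on a set of ring generators of $\ggk^\natural_m\mgRV^{\db}[*]/(\bm P_\Gamma)$.

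By Lemma~\ref{coh:decom} and the $\dag$-versions of Lemmas~\ref{shift:K:csn:def} and~\ref{gam:red:lrv}, such a generating set is provided by the classes $[V]$ with $V\in\RES$ such that $\vrv(V)$ is a single tuple lying in $(m^{-1}\Z)^j$, and the classes $[(I^\sharp,\pr_{\leq k})]$ with $I\in\Gamma^{\db}[k]$; classes whose value-group data are not rational at level $m$ are annihilated by both maps (on the left because the relevant $I_{i,m}$ are empty, on the right because they carry no $\C\dpar{t^{1/m}}$-points), and the generators with nontrivial volume form lie outside the subring in question. On $[(I^\sharp,\pr_{\leq k},0)]$ the left-hand side is, by (\ref{hm:ass}) and (\ref{pui:laurent}),
\[
[\tbk(I^\sharp)]^{\hat\delta}\sum_{\gamma\in I(m^{-1}\Z)}[\A]^{-m(\gamma_1+\cdots+\gamma_k)},
\]
since $\omega_{\pr_{\leq k}}$ $\vrv$-contracts on $I^\sharp$ to $\gamma\mapsto\gamma_1+\cdots+\gamma_k$; on $[V]$ it is $[\tbk(V)]^{\hat\delta}[\A]^{-m\Sigma\vrv(V)}$. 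For the right-hand side one computes $\bb L$ of each generator as an explicit union of polydiscs and applies \cite[\S~4.2]{hru:loe:lef}: choosing $\beta\in(m^{-1}\Z)^n$ large, a polydisc of radius $\gamma$ contributes, after truncation at $\beta$, a product of $m\Sigma(\beta-\gamma)$ copies of $\A$ with a torsor recording the admissible leading coefficients, on which $\gal(\C\dpar{t^{1/m}}/\R\dpar t)=\delta_m$ acts through the Kummer cocycle $\sigma\cdot t^{\gamma}=\zeta^{m\gamma}t^{\gamma}$; dividing by $[\A]^{m\Sigma\beta}$ cancels the dependence on $\beta$, and summing over $\gamma\in I(m^{-1}\Z)$ (respectively taking the single value $\gamma=\vrv(V)$, while carrying along the $\K$-set $V$ itself) reproduces the same expression with each leading-coefficient torsor in place of the corresponding twistback. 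Applying $\Theta\circ\iota$ then matches the two, because $\Theta$ is by construction (Remark~\ref{first:theta}, Notation~\ref{bipo:gdv}) the homomorphism sending $\iota$ of such a twistback class to the class in $\gdv$ of the variety carrying precisely this Kummer $\hat\delta$-action.

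The crux of the argument, and the step I expect to require genuine care, is this last matching of equivariant data: one must verify that the $\delta_m$-action on the leading-coefficient torsors produced by the arc-truncation recipe of \cite[\S~4.2]{hru:loe:lef} agrees, uniformly in the finite sum over $\gamma\in I(m^{-1}\Z)$, with the $\hat\delta$-action attached by $\Theta$ through Kummer theory, i.e. that the two passages from $\sggk\RES$ into $\gdv$ — one factoring through $A[m;\beta]$ at level $m$, the other through $\Theta$ — are compatible with the canonical projection $\hat\delta\fun\delta_m$. The real (conjugation) part adds nothing further, since it enters both constructions only through the relation $\bm c\sigma\bm c\sigma=1$ used throughout \S~\ref{sect:Groth:real}. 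With this compatibility in hand the identity holds for every proper $\beta$-invariant $A$ with $\pvf\rest A$ finite-to-one, and the case of a variable invariance radius reduces to it by a finite definable decomposition of $A$ according to the radius function.
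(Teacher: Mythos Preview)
Your overall strategy — reduce both sides to ring homomorphisms on $\ggk\mgRV^{\db}[*]/(\bm P_\Gamma)$ and compare on generators coming from twistoid decomposition — matches the paper's approach. The paper likewise reduces to $A=\bb L\bm U$, then to $\vrv(U)$ a singleton, and finishes by the same explicit computation you outline. Where the two arguments diverge is in how the reduction from $A$ to $\bb L\bm U$ is justified, and this is where your proposal has a genuine gap.

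You assert that $[(A,0)]\mapsto(\Theta\circ\iota)(\tilde A[m])$ is invariant under $\mgVF^\diamond$-isomorphism, with the parenthetical justification that a relatively unary proper covariant morphism ``induces, away from subsets of smaller $\VF$-dimension that do not contribute to $A[m;\beta]$, an isomorphism of the associated truncated-arc varieties.'' This is precisely the step the paper flags as nontrivial: the paragraph preceding its proof states explicitly that Theorem~\ref{main:prop:dia} alone is \emph{not} enough, and that one needs a \emph{proper special covariant bijection} $F:(A,0)\to(\bb L\bm U,0)$ with sufficiently high aperture, citing \cite[Definition~6.6, Proposition~6.12, Lemma~6.14]{HL:modified} (not \cite[\S~4.2]{hru:loe:lef}). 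Your argument does not supply this. A general morphism in $\mgVF^\diamond$ is a composition of relatively unary proper covariant bijections whose covariance radii $(\alpha_t,\beta_t)$ depend on $t$ and need not lie in $m^{-1}\Z$, so there is no a priori reason the induced map on polydisc quotients is a $\RES_m$-isomorphism between $A[m;\beta]$ and $B[m;\beta']$. The aperture condition is exactly what forces $\bb L\bm U$ to be $\beta$-invariant for the \emph{same} $\beta$ and forces $F$ to act on $A[m;\beta]$ by changing only the $\RV$-coordinates, so that $[A[m;\beta]]=[\bb L\bm U[m;\beta]]$ in $\ggk\RES_m$. Also note that in $\mgVF^\diamond$ all morphisms are honest bijections (Remark~\ref{dia:cat:well}), so your ``away from subsets of smaller $\VF$-dimension'' clause has no content here.

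Once this reduction is properly handled, your generator computation and the matching of the Kummer $\delta_m$-action with the $\hat\delta$-action encoded by $\Theta$ are correct and coincide with what the paper calls ``a simple computation.''
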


To show this lemma, the statement of Theorem~\ref{main:prop:dia} itself is not quite enough. We need the fact that there exists a special $\mVF^\diamond[*]$-morphism $F : (A, 0) \fun (\bb L \bm U, 0)$, called a \memph{special covariant bijection}, with $\bm U \in \RV^{\db}[*]$; see the proof of \cite[Lemma~8.6]{HL:modified} for detail.

Recall the diagrams (\ref{tauhat:forget}) and (\ref{real:RV:com}). Applying the ring homomorphism  $\Xi_{\R} \circ \Phi$, localized at $[\A]$, to both sides of the equality in Lemma~\ref{inv:dir:com}, we get an equality in $\ggk{\RVar}[[\A]^{-1}]$; similarly in $\gsv[[\A]^{-1}]$ if the $\mdl A_{\C}$-module homomorphism $\Xi$, localized at $[(\A_{\C}, \bm c)]$, is used.

\subsection{Motivic zeta function and motivic Milnor fiber}\label{subsec:milnor}

Let $X$ be a nonsingular connected variety of dimension $d$ over $\R$ and $f$ a nonconstant morphism, also over $\R$, from $X$ to the affine line. Let $z \in f^{-1}(0)$ be an $\R$-rational point.  Since $X$, $f$, and $z$ are fixed, we shall not always carry them in notation and terminology.
Let $\pi$ be the reduction map $X(\OO) \fun X(\K)$. The \memph{nonarchimedean Milnor fiber} of $f$ is the set
\begin{equation}\label{com:nonarch}
\mdl X = \set{ x \in X(\OO) \given \rv(f(x)) = \rv(t) \text{ and } \pi(x) = z }.
\end{equation}
Note that $\mdl X$ may be constructed in an affine neighborhood of $z$ and hence is indeed a (quantifier-free) definable set. Moreover, it is $\beta$-invariant for every $\beta \geq 1$. Thus, $\mdl X$ is an object of $\mgVF^\diamond[*]$ equipped with the constant volume form $0$. We define the  \memph{positive motivic zeta function} of $f$ to be the power series
\begin{equation}\label{real:zeta}
Z^{1}(T) = \sum_{m \in \Z^+} \Big(\Xi \circ \bm \eta \circ \bm h_m \circ \int^{\diamond} \Big)([\mdl X]) T^m \in \gsv[[\A]^{-1}] \dbra T.
\end{equation}

Recall from \S~\ref{sec:intro}, for each $m \in \Z^+$, the set of truncated arcs at $z$:
\[
\mdl X_{m} = \set{ \varphi \in X(\C[t] / t^{m+1} ) \given f(\varphi) = t^m \mod t^{m+1} \tand \varphi(0) = z  },
\]
which may be considered as a complexified variety over $\R$ with a natural $\delta_m$-action. Taking real points gives rise to a real variety with a $\mu_2$-action:
\[
\mdl X_{m}(\R) = \set{ \varphi \in X(\R[t] / t^{m+1} ) \given f(\varphi) = t^m \mod t^{m+1} \tand \varphi(0) = z  };
\]
we denote it by $\mdl X_{m}^1$ to emphasize the sign of $t^m$. Note that $\mdl X_{m}^{1} \otimes_{\R}\C=\mdl X_m$.

\begin{prop}
The zeta function in (\ref{real:zeta}) coincides with the classical one as in (\ref{old:zeta}).
\end{prop}
\begin{proof}
By Remark~\ref{dia:to:db:dag}, the expression $(\bm \eta \circ \bm h_m \circ \int^{\diamond} )([\mdl X]) \in \gdv[[\A]^{-1}]$ makes sense, and by Lemma~\ref{inv:dir:com}, we have $(\bm \eta \circ \bm h_m \circ \int^{\diamond} )([\mdl X]) = \Theta(\tilde{\mdl X}[m])$. Notice that
\[
\mdl X_{m} \cong \set{ \varphi \in X(\C[t^{1/m}] / t^{(m+1)/m} ) \given \rv(f(\varphi)) = \rv(t) \tand \varphi(0) = z }.
\]
So  $\Theta (\tilde{\mdl X}[m]) = [\mdl X_{m}][\A]^{-md}$. So the coefficients of $Z^{1}(T)$ can be recast as
\begin{equation}\label{z:coeff:change}
\Big(\Xi \circ \bm \eta \circ \bm h_m \circ \int^{\diamond} \Big)([\mdl X]) = \Xi ([\mdl X_m][\A]^{-md}) = [\mdl X_m^1][\A]^{-md}. \qedhere
\end{equation}
\end{proof}

\begin{rem}\label{mu2:failure}
The $\mu_2$-action on $\mdl X_{m}^{1}$ considered here is in general different from the one  in \cite{Fseries}, where it is simply induced by $t \efun -t$ for $m$ even. For instance, suppose that $X$ is the affine line, $f$ is the square function, and $z = 0$, then the $\mu_2$-action given by $\Xi$ on
\[
\mdl X_4^1=\set{ \pm t^2 + bt^3 +ct^4 \in \R[t] / t^5 \given  (b,c) \in \R^2 } \cong \{x^2=1\} \times \R^2
\]
swaps any two elements of the form $\pm t^2 +ct^4$ and hence induces the unique nontrivial $\mu_2$-action on the first factor of $\{x^2=1\}\times 0 \times \R$,  whereas the action induced by $t \efun -t$ is entirely trivial. Actually, the $\mu_2$-action induced by $t \efun -t$ corresponds to the dotted route from $\ggk^{\flat ,\hat \delta} \Var_{\R}$ to $\gsv$ in (\ref{tauhat:forget}).

The motivic zeta function $Z^{G}(T)$ studied in \cite{Fseries} is shown to be a rational series (the rational formula given therein needs to be revised, though) and hence  one can take the limit as $T$ goes to infinity, as we shall do to $Z^{1}(T)$ too below. Unfortunately, this process of ``taking the limit'' kills off the $\mu_2$-actions on the coefficients of $Z^{G}(T)$ and, consequently, the limit of $Z^{G}(T)$ does not actually carry any $\mu_2$-action. In contrast, the limit of $Z^{1}(T)$ often retains a $\mu_2$-action and lives in $\gsv[[\A]^{-1}]$.
\end{rem}

There is the negative counterpart $Z^{-1}(T)$ of $Z^{1}(T)$. Since the situation is the same, we shall concentrate on the positive one and drop the qualifier ``positive'' from the terminology. We do remark that, although complexification has seen success to some extent, for instance, the result on the Euler characteristics in \cite{McParu:mono} or the fact that the involution defined in Remark \ref{rem-inv} exchanges $Z^{1}(T)$ and $Z^{-1}(T)$ (as observed in \cite[Lemma 3.2]{F-Rfib} for truncated arcs), it is unclear how the duality of ``the positive'' and ``the negative'' here works.

\begin{nota}
Let $\gsv[[\A]^{-1}][T]_\dag$ be the localization of $\gsv[[\A]^{-1}][T]$ with respect to the multiplicative family generated by the elements $1 - [\A]^{a}T^b$, where $a \in \Z$ and $b \in \Z^+$; it may be regarded as a subring of  $\gsv[[\A]^{-1}] \dbra T$. Similarly for $\ggk{\RVar}[[\A]^{-1}][T]_\dag$.
\end{nota}

Applying the forgetful homomorphism $\Phi$ in (\ref{tauhat:forget}) termwise to the coefficients of $Z^{1}(T)$, we obtain a zeta function $\bar Z^{1}(T)$. It is shown in \cite{F1}, using resolution of singularities, that $\bar Z^{1}(T)$ belongs to  $\ggk{\RVar}[[\A]^{-1}][T]_\dag$.
Letting ``$T$ go to infinity'' as described in \cite[\S~8.4]{hru:loe:lef}, we get a limit
\begin{equation}\label{pos:mil:no:act}
\bar{\mathscr S}^{1} \coloneqq - \lim_{T \limplies \infty} \bar Z^{1}(T) \in \ggk{\RVar}[[\A]^{-1}],
\end{equation}
which is understood as the \memph{real motivic Milnor fiber} of $f$. The following finer result is in the same spirit.

\begin{thm}\label{direct:mil}
The zeta function $Z^{1}(T)$ belongs to $\gsv[[\A]^{-1}][T]_\dag$ and
\begin{equation}\label{upsi:to:eb}
 \mathscr S^{1} \coloneqq - \lim_{T \limplies \infty} Z^{1}(T) = \Big( \Xi \circ \Theta \circ \bb E^{\diamond} \circ \int^{\diamond} \Big)([\mdl X]) \in \gsv[[\A]^{-1}].
\end{equation}
\end{thm}

The proof is the same as that of \cite[Theorem~8.11]{HL:modified}, one only needs to insert ``$\Xi$'' at suitably places; this is left to the reader.

As  pointed out in \cite[Remark~8.14]{HL:modified}, we cannot really take  the motivic Milnor fiber $\mathscr S^{1}$ of $f$ in $\gsv$, at least not if $\mathscr S^{1}$ is viewed as something obtained through  $Z^{1}(T)$. On the other hand, in light of Theorem~\ref{direct:mil} and Remark~\ref{big:dia:up}, we can forego the zeta function point of view and recover $\mathscr S^{1}$ in $\gsv$ directly as
\[
\Vol^{\mu_2}([\mdl X])= \Big( \Xi \circ \Theta \circ \bb E_{b} \circ \int \Big)([\mdl X]) = \Big( \Xi \circ \Theta \circ \bb E^{\diamond} \circ \int^{\diamond} \Big)([\mdl X]).
\]

\begin{rem}\label{comp:real:non}
The \memph{real nonarchimedean Milnor fiber} $\mdl X^{1}$ of $f$ is the set $\mdl X(\puR)$ of $\puR$-rational points of $\mdl X$. We can calculate $\mathscr S^{1}$ directly  as $\Vol_{\puR}^{\mu_2}([\mdl X^1])$. This is sometimes much simpler than working with the complex nonarchimedean Milnor fiber $\mdl X$; see  Example~\ref{mil:exam} below. The reason is that $\puR$ is real closed (and indeed \omin-minimal). This additional structure does give rise to a variant of the Hrushovski-Kazhdan  construction, which we shall discuss in \S~\ref{sec:tcon}.
\end{rem}

\begin{rem}\label{g:comm:alm}
By  (\ref{off:fac:A}),  the upper portion of (\ref{sum:narr}) almost commutes if $\bb E_b$ is replaced by $\bb E_g$. At any rate,  one can  consider the homomorphism $\Vol^{\mu_2}_g$, using $\bb E_g$ instead of $\bb E_b$, and attach $\Vol^{\mu_2}_g([\mdl X])$ to $f$ directly. This gives $\Vol^{\mu_2}_g([\mdl X]) = \Vol^{\mu_2}([\mdl X])[\A]^{-d}$. On the other hand,  the work in \cite{B2} shows that the Bittner dual $\mdl D(\mathscr S^{1})$ of $\mathscr S^{1} = \Vol^{\mu_2}([\mdl X])$ is $\mathscr S^{1}[\A]^{1-d}$. So $\Vol^{\mu_2}_g([\mdl X])[\A] = \mdl D(\mathscr S^{1})$. We will explain in a future paper how to recover these results around the Bittner duality without using the weak factorization theorem of \cite{WFT}.

One cannot help but wonder if there is  a more geometric interpretation of  $\Vol^{\mu_2}_g([\mdl X])$, on a par with that of the motivic Milnor fiber $\mathscr S^{1}$, and if their duality is actually a shadow of some sort of cohomological duality (categorification). In the opposite direction, going further down to the level of Euler characteristic, something definite can be said, see Corollary~\ref{link:open:closed}.
\end{rem}

\begin{exam}\label{mil:exam}
Consider the polynomial function $f(x,y)=x^6+x^2y^2+y^6$ on the affine plane and take $z$ to be the origin. We decompose the real nonarchimedean Milnor fiber $\mdl X^1$ into the following sets in $\RV(\puR)$:
\begin{gather*}
  A= \{y=0 \} \cap \{\rv(x^6) = \rv(t)\}, \quad A'= \{x=0\} \cap \{\rv(y^6) = \rv(t)\},\\
  B= \{\infty > \vv (y^6) >1\} \cap \{\vv(x^2y^2)>1 \} \cap \{\rv(x^6) = \rv(t)\},\\
    B'= \{\infty > \vv(x^6) >1\} \cap \{\vv(x^2y^2)>1\} \cap \{\rv(y^6) = \rv(t)\},\\
  C= \{\vv(x^6) =\vv(x^2y^2) = 1 \} \cap \{\rv(x^6+x^2y^2)=\rv(t)\}, \\
   C'= \{\vv(x^2y^2) = \vv (y^6) = 1\} \cap \{\rv(x^2y^2 + y^6) = \rv(t)\},\\
   D= \{\vv (x^6) >1\} \cap \{\vv (y^6) >1\} \cap \{\rv(x^2y^2)= \rv(t)\}.
\end{gather*}
By symmetry, $[ A] = [A']$, $[B] = [B']$, and $[ C] = [C']$ in $\ggk \VF_{\puR}$. Observe that if we work with the complex nonarchimedean Milnor fiber $\mdl X$ then the first term in $C$ should be  $\{ \vv(x^6) =\vv(x^2y^2) \leq 1 \}$, but  the only possibility for $C$ in $\RV(\puR)$ is the indicated condition because the leading terms of $x^6$ and $x^2y^2$ cannot cancel in $\puR$. This simplifies the computation tremendously. In comparison, we shall perform a similar decomposition in $\puC$ for a simpler polynomial (no mixed terms) in \S~\ref{section:TS}.

In terms of elements in $\ggk \RES_{\puR}[*] \otimes \ggk \Gamma_{\puR}[*]$ modulo $(\bm P - 1)$, the integrals $\int_{\puR}[A]$, $\int_{\puR}[B]$, and $\int_{\puR}[C]$ work out at, respectively,
\begin{gather*}
  [\{x^6 =\rv(t)\}]  \in \ggk \RES_{\puR}[1],\\
  [\{x^6=\rv(t)\}] \otimes [(1/3,\infty)^\sharp] \in \ggk \RES_{\puR}[1] \otimes \ggk \Gamma_{\puR}[1],\\
  [\set{(x,y) \in (1/6)^\sharp \times (1/3)^\sharp \given \tbk(x^6)+ \tbk(x^2y^2) = 1 }] \in \ggk\RES_{\puR}[2].
\end{gather*}
The assignment $(x,y)\efun (xy,y)$ gives a definable bijection between $D$ and
$$
\set{(x,y)\in \MM^2 \given \rv(x^2) =\rv(t) \tand 1/6<\vv(y)<1/3}
$$
and hence $\int_{\puR}[D]$ works out at
$$
[\{x^2=\rv(t)\}] \otimes [(1/6,1/3)^\sharp] \in \ggk \RES_{\puR}[1] \otimes \ggk \Gamma_{\puR}[1].
$$
Since $\chi_b((1/3,\infty))=0$ and $\chi_b((1/6,1/3))=-1$, we get, in $\gsv$,
\begin{equation}\label{compu:6}
\begin{split}
 \Vol_{\puR}^{\mu_2}([\mathcal X^1]) &= 2 \Vol_{\puR}^{\mu_2} ([A])+ 2 \Vol_{\puR}^{\mu_2} ([B]) + 2 \Vol_{\puR}^{\mu_2}([C])+ \Vol_{\puR}^{\mu_2} ([D])\\
    &= 2[\{x^6=1\}]+2[\{x^6+x^2y^2=1\}\cap \G_m^2]-[\G_m][\{x^2=1\}]\\
    &= 2[\{x^6+x^2y^2=1\}]-[\G_m][\{x^2=1\}],
\end{split}
\end{equation}
where the $\mu_2$-action is given by $(x,y)\efun (-x,y)$ for the first term and $x\efun -x$ for the second term. Then, applying the realization map $\beta^{\mu_2}$ in Remark \ref{rem:beta}, we get
\begin{equation}\label{exam:equi:beta}
(\beta^{\mu_2} \circ \Vol_{\puR}^{\mu_2})([\mathcal X^1]) = 2u-(u-1)=u+1.
\end{equation}
If we forget the $\mu_2$-action on $\Vol_{\puR}^{\mu_2}([\mathcal X^1])$, it becomes
$2[\{x^6+x^2y^2=1\}]-2[\G_m]$ in $\ggk \RVar$. If we take further the virtual Poincar\'e polynomial then it becomes $0$,
since $\{x^6+x^2y^2=1\}$ has the same virtual Poincar\'e polynomial as the unit circle minus two points.
\end{exam}

\subsection{Concerning the virtual Poincar\'e polynomial}\label{vPp}

Let $R$ be a real closed field. An $R$-variety is defined in the same way as a real  variety, but with  $\R$ replaced by $R$. The  corresponding  category of $R$-varieties is denoted by ${R}{\Var}$ and its Grothendieck ring by $\ggk {R}{\Var}$; we have seen the special case $R = \R$ in \S~\ref{sect:Groth:real}.

The virtual Poincar\'e polynomial is an invariant of $\RVar$, which is defined in \cite{mccrory:paru:virtual:poin}. The proof for its existence there relies on the weak factorization theorem of \cite{WFT} and Poincar\'e duality; the former is  valid over any field of characteristic $0$ and the latter is available for singular homology of compact nonsingular real algebraic varieties with $\F_2$-coefficients. Replacing singular homology with semialgebraic homology $H^{sa}$ with $\F_2$-coefficients (see \cite{DelKne} or \cite[\S~11.7]{BCR}), Poincar\'e duality still holds (in the semialgebraic setting ``compact'' means ``closed and bounded''). Thus, the proof goes through almost verbatim for ${R}{\Var}$:

\begin{thm}
There exists a unique  homomorphism $\beta^{R}: \ggk {R}{\var} \fun \Z[u]$ that assigns to each compact nonsingular $R$-variety $X$ its Poincar\'e polynomial $\sum_{i \in \N} \dim H_i^{sa}(X,\F_2)u^i$.
\end{thm}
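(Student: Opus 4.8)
The plan is to transfer the proof of existence and uniqueness of the virtual Poincar\'e polynomial from \cite{mccrory:paru:virtual:poin} to an arbitrary real closed base field $R$, isolating the two ingredients of that proof that are base-field sensitive: the weak factorization theorem and Poincar\'e duality for compact nonsingular real varieties with $\F_2$-coefficients. The first needs no modification, since \cite{WFT} is valid over every field of characteristic zero, in particular over $R$. The second is to be replaced by semialgebraic Poincar\'e duality with $\F_2$-coefficients for the semialgebraic homology $H^{sa}_*$ of Delfs and Knebusch over $R$ (see \cite{DelKne} and \cite[\S 11.7]{BCR}). Once these substitutions are in place, I expect the remaining argument to go through mechanically.

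First I would dispose of uniqueness. Since $\mathrm{char}(R) = 0$, resolution of singularities and compactification are available over $R$, so, stratifying, resolving, compactifying, and using the scissor relations, every class in $\ggk{R}{\var}$ is a $\Z$-linear combination of classes of compact nonsingular $R$-varieties, exactly as over $\R$. Hence a ring homomorphism $\ggk{R}{\var}\fun\Z[u]$ is determined by its values on compact nonsingular $R$-varieties, and uniqueness follows.

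For existence, I would first set up the topological bookkeeping over $R$: $H^{sa}_*(-,\F_2)$ is a homology theory on semialgebraic sets (semialgebraic homotopy invariance, the long exact sequence of a pair, excision, Mayer--Vietoris) with finitely generated groups, so that the Poincar\'e polynomial $\sum_i \dim H^{sa}_i(X,\F_2)\,u^i \in \Z[u]$ is well defined for every semialgebraic $X$, it is multiplicative for products by the K\"unneth formula with field coefficients, and, from the affine cell decomposition of projective space over $R$ together with the relations $\beta(\mathrm{pt}) = 1$ and $\beta(\A^1_R) = u$ (the latter from $[\PP^1_R] = [\A^1_R] + [\mathrm{pt}]$ and the computation $\beta(\PP^1_R) = 1+u$), one gets $\beta(\PP^n_R) = 1 + u + \cdots + u^n$. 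I would then define $\beta^R$ on compact nonsingular $R$-varieties by the Poincar\'e polynomial and establish the blow-up formula: for $C$ a compact nonsingular closed subvariety of codimension $c$ in a compact nonsingular $X$, with $\widetilde X = \mathrm{Bl}_C X$ and exceptional divisor $E$ a $\PP^{c-1}_R$-bundle over $C$,
\[
\beta^R(\widetilde X) - \beta^R(E) = \beta^R(X) - \beta^R(C), \qquad \beta^R(E) = (1 + u + \cdots + u^{c-1})\,\beta^R(C).
\]
This is the step where semialgebraic Mayer--Vietoris and semialgebraic Poincar\'e duality are used to control the complements involved. Finally I would invoke weak factorization over $R$ to conclude that the value obtained by resolving and compactifying an arbitrary $R$-variety and then applying $\beta^R$ to the resulting compact nonsingular pieces is independent of all choices; this yields a well-defined $\beta^R$ on $\ggk{R}{\var}$, additive by construction and multiplicative by K\"unneth, with the stated normalization on compact nonsingular varieties.

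The main obstacle I anticipate lies entirely in the topological input over a non-archimedean real closed field, namely making sure that Delfs--Knebusch semialgebraic (co)homology over $R$ really does satisfy finiteness, the Eilenberg--Steenrod axioms in the semialgebraic category, and Poincar\'e duality with $\F_2$-coefficients for the semialgebraic manifold underlying a compact nonsingular $R$-variety, together with the expected behaviour of $\PP^n_R$ and of Zariski-locally-trivial projective bundles over $R$. Granting these, nothing in \cite{mccrory:paru:virtual:poin} is special to $\R$: the weak factorization argument and the Grothendieck-ring manipulations are purely algebraic and already work verbatim over $R$.
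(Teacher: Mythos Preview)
Your proposal is correct and follows essentially the same route as the paper: both isolate weak factorization (valid over any field of characteristic $0$ by \cite{WFT}) and Poincar\'e duality as the only base-field-sensitive inputs in \cite{mccrory:paru:virtual:poin}, and replace singular homology by the semialgebraic homology $H^{sa}_*$ of \cite{DelKne} (see also \cite[\S 11.7]{BCR}) so that Poincar\'e duality with $\F_2$-coefficients remains available for closed-and-bounded nonsingular $R$-varieties. The paper's proof is in fact even terser than yours --- it simply records these two substitutions and asserts that the argument goes through verbatim --- so your more detailed outline of uniqueness via resolution and of existence via the blow-up relation is a faithful unpacking of the same argument.
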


If $R = \R$ then we  denote $\beta^{\R}$ simply by $\beta$ as in Remark \ref{rem:beta}.

\begin{rem}\label{poin:inv:ext}
Let $R \to R'$ be a real closed field extension. Let $X$ be an $R$-variety. Then the virtual Poincar\'e polynomial of the extension $X(R')$ of $X$ to $R'$ is equal to the virtual Poincar\'e polynomial of $X$. Actually, for $X$  compact and nonsingular, this follows immediately from  the invariance of semialgebraic homology under real closed field extension. The general case follows from additivity, expressing the class of $X$ in terms of classes of compact nonsingular $R$-varieties via resolution of singularities.
\end{rem}


Write $\Vol_{\puR}$ for the composition of $\Vol_{\puR}^{\mu_2}$ with the forgetful homomorphism $\gsv \fun \ggk \RVar$. Since $\tRVar$ is a subcategory of $\VF_{\puR}$, there is a natural homomorphism $\ggk \tRVar \fun \ggk \VF_{\puR}$. Composing this with $\Vol_{\puR}$ and then the virtual Poincar\'e polynomial map $\beta$, we obtain a homomorphism $\beta^{\lim} : \ggk \tRVar \fun  \Z[u]$. Thus we have found two homomorphisms $\beta^{\lim}$, $\beta^{\puR}$ from $\ggk \tRVar$ to  $\Z[u]$.

\begin{rem}
Over the algebraic closure of a henselian discretely valued field, it is shown in \cite[Proposition~3.23]{Nic:Pay:trop} that the analogue of $\beta^{\lim}$, defined with the Hodge-Deligne polynomial instead of the virtual Poincar\'e polynomial, gives the  Hodge-Deligne polynomial of the limit mixed Hodge structure associated with a variety. It would seem interesting to also compare $\beta^{\lim}$ with a similar map on limit structures, but such structures have yet to be constructed in the real framework.

Also, the duality of $\bb E_b$ and $\bb E_g$ described in Remarks~\ref{eb:not:eg} and~\ref{g:comm:alm} yields another  homomorphism $\beta^{\lim}_g : \ggk \tRVar \fun \Z[u]$.
\end{rem}

\begin{lem}\label{comsmo:same}
For all compact smooth real variety $X$, $\Vol^{\hat \delta}([X(\puC)]) = [X(\C)]$ in $\gdv$ and hence $\Vol_{\puR}^{\mu_2}([X(\puR)]) = [X]$ in $\gsv$ .
\end{lem}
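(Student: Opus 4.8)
The plan is to evaluate $\Vol^{\hat \delta}([X(\puC)]) = (\Theta \circ \bb E_b \circ \int)([X(\puC)])$ by first replacing $X$ with an affine étale model, then computing the Hrushovski--Kazhdan integral $\int$ there as a pure residue-field class, and finally reading off the effect of $\bb E_b$ and $\Theta$; the passage to $\gsv$ will be a formal consequence of the descent diagrams. To begin with, since $X$ is proper over $\R$ and $\OO = \C \dbra{t^{\infty}}$ is a henselian valuation ring with fraction field $\puC$, the valuative criterion of properness gives $X(\puC) = X(\OO)$, and there is the reduction map $\pi : X(\OO) \fun X(\C)$, which is surjective with every fibre a coset of $\MM^d$, $d = \dim X$, because $X$ is smooth. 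Pick a finite cover of $X$ by Zariski-open affine $\R$-subvarieties $X_j$, each carrying an étale $\psi_j : X_j \fun \A^d$ over $\R$. As $\spec \OO$ is local, any $\OO$-point of $X$ whose closed point lands in $X_j$ factors through $X_j$, so $X(\OO) = \bigcup_j X_j(\OO)$, and every finite intersection $X_J = \bigcap_{j \in J} X_j$ is again smooth affine over $\R$ with an étale map to $\A^d$. Since $\Vol^{\hat \delta}$ and, on the other side, $[\,\cdot \otimes_\R \C\,]$ in $\gdv$ are additive, inclusion--exclusion reduces the statement to the case $X = Y$ smooth affine over $\R$ with an étale $\psi : Y \fun \A^d$ over $\R$; then $Y(\OO)$ is a definable set on its own.

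For such a $Y$, formal étaleness of $\psi$ and henselianity of $\OO$ make $x \mapsto (\pi(x), \psi(x))$ a bijection from $Y(\OO)$ onto $A = \set{(z, w) \in Y(\C) \times \OO^d \given \res(w) = \bar\psi(z)}$, where $\res : \OO \fun \K$ is the (definable) reduction map; hence $[Y(\OO)] = [A]$ in $\ggk \VF_*$. Now stratify $Y(\C) = \bigsqcup_{P \subseteq [d]} S_P$ according to the vanishing pattern $\set{ i \given \bar\psi_i(z) = 0 } = [d] \mi P$. Over $S_P$ the fibre of $A$ is $\prod_{i \in P} \bar\psi_i(z)^\sharp \times \MM^{[d] \mi P}$, so, after discarding the auxiliary $\RV$-coordinates that record the $\rv$ of the $\MM$-factors (which does not change the class in $\ggk \VF_*$), $A \cap (S_P \times \OO^d)$ equals $\bb L(\bm U_P)$ for $\bm U_P = \bigl( S_P \times (\RV^{\circ \circ}_\infty)^{[d] \mi P},\ f_P \bigr)$ with $f_P(z, \rho) = \bigl( (\bar\psi_i(z))_{i \in P}, \rho \bigr)$; here $\pvf$ is finite-to-one because $\psi$ is étale and $\bar\psi(z)$ is pinned down by its $P$-components on $S_P$. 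Hence $\int_{+} [A \cap (S_P \times \OO^d)] = [\bm U_P] / \isp$ by Theorem~\ref{main:prop}.

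In $\ggk \RV[*] / (\bm P - 1)$ one has $[\RV^{\circ \circ}_\infty] \equiv [1]$, so $[\bm U_P]$ reduces to the pure residue-field class $[(S_P, (\bar\psi_i)_{i \in P})] \cdot [1]^{d - |P|} \in \ggk \RES[d]$; the defining formula for $\bb E_b$ in Proposition~\ref{base:Eb:Eg} (trivial $\Gamma$-part, $\chi_b = 1$, degree normalisation $[1]^{-d}$) sends it to $[S_P] \in \sggk \RES$, and summing over $P$ gives $(\bb E_b \circ \int)([Y(\OO)]) = [Y(\C)]$ in $\sggk \RES$. Applying $\Theta$ and using that the twistback of a set genuinely living in the residue field is the identity, $\Theta([Y(\C)])$ is the complexification $Y \otimes_\R \C$ with trivial $\hat \mu$-action, that is, $[X(\C)]$ in $\gdv$; this is the first assertion. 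For the second, append the descent square (\ref{hen:sum}) with $\bb M = \puR$ and the square (\ref{real:RV:com}) to (\ref{upsi:to:eb}), exactly as in Remark~\ref{comp:real:non}, to get $\Vol_{\puR}^{\mu_2}([X(\puR)]) = \Xi\bigl( \Vol^{\hat \delta}([X(\puC)]) \bigr) = \Xi([X(\C)])$; since the $\hat \mu$-action is trivial, $d_x = 1$ for every point, so Definition~\ref{mu2:var} yields the trivial $\mu_2$-action on $X(\R)$, whence $\Xi([X(\C)]) = [X]$ in $\gsv$.

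The step I expect to require the most care is the middle one --- showing that $\int[Y(\OO)]$ really is a pure residue-field class, i.e.\ that the étale model together with henselianity genuinely trivialises $Y(\OO) \fun Y(\C)$ into polydisc bundles in an $\lan{}{RV}{}$-definable fashion, along with the bookkeeping across the vanishing strata $S_P$ and the $\infty$-coordinates. Everything afterwards --- the action of $\bb E_b$, $\Theta$ and $\Xi$ on pure residue-field classes, and the descent --- is formal from the material recalled above.
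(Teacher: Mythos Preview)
Your proof is correct and follows essentially the same route as the paper's. Both arguments reduce to showing $\int[X(\puC)] = [(X(\C), f)]/(\bm P - 1)$ for a quasi-finite (piecewise \'etale) $f: X \to \A^d$; the paper simply invokes \cite[Lemma~13.3(2)]{hrushovski:kazhdan:integration:vf} for this step, whereas you unpack it explicitly via the affine \'etale cover, the henselian lifting $x \mapsto (\pi(x), \psi(x))$, and the stratification by vanishing patterns of $\bar\psi$. The derivation of the second clause from (\ref{hen:sum}) and (\ref{real:RV:com}) is identical in both.
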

\begin{proof}
Let $n$ be the  dimension of $X$ and choose a quasi-finite morphism $f : X \fun \A^n$ over $\R$. Set $\bm X = (X(\C) , f)$, which is treated as an object of $\RES[n]$. We have $[X(\C)] = \Theta(\bb E_b([\bm X]))$ in $\gdv$. Thus, for the first clause, it is enough to show $\int [X(\puC)] = [\bm X] / (\bm P - 1)$. This is  essentially the content of \cite[Lemma~13.3(2)]{hrushovski:kazhdan:integration:vf}, and the same proof works almost vertatim (the function $f$ needs to be adjusted so to become piecewise \'etale). The second clause is immediate from (\ref{hen:sum}) and (\ref{real:RV:com}).
\end{proof}

Combining this lemma with Remark~\ref{poin:inv:ext}, we get the following equality:
\begin{cor}
For any real algebraic variety $X$,
\[
\beta^{\lim} ([X(\puR)])= \beta([X]) = \beta^{\puR}([X(\puR)]).
\]
\end{cor}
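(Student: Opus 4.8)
The plan is to deduce both equalities from the Lemma just proved, together with Remark~\ref{poin:inv:ext}, reducing the general case to compact nonsingular varieties by additivity.

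The equality $\beta([X]) = \beta^{\puR}([X(\puR)])$ is immediate: $\beta^{\puR}([X(\puR)])$ is by construction the virtual Poincar\'e polynomial of the $\puR$-variety $X(\puR)$ and $\beta([X])$ that of the real variety $X$, and these coincide because the virtual Poincar\'e polynomial is invariant under the real closed field extension $\R \hookrightarrow \puR$ --- this is exactly Remark~\ref{poin:inv:ext} with $R = \R$ and $R' = \puR$.

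For $\beta^{\lim}([X(\puR)]) = \beta([X])$, recall that by construction $\beta^{\lim}$ is the composite
\[
\ggk \RVar \fun \ggk\tRVar \fun \ggk\VF_{\puR} \xfun{\Vol_{\puR}} \ggk\RVar \xfun{\beta} \Z[u],
\]
where the first arrow is extension of scalars $[X] \mapsto [X(\puR)]$, the second the inclusion $\tRVar \sub \VF_{\puR}$, and $\Vol_{\puR}$ is the composite of $\Vol_{\puR}^{\mu_2}$ with the forgetful homomorphism $\gsv \fun \ggk \RVar$; the first three arrows are ring homomorphisms. Denote their composite by $\rho \colon \ggk\RVar \fun \ggk\RVar$, so that $\rho([X]) = \Vol_{\puR}([X(\puR)])$ and $\beta^{\lim}([X(\puR)]) = \beta(\rho([X]))$. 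By the Lemma, $\Vol_{\puR}^{\mu_2}([X(\puR)]) = [X]$ in $\gsv$ whenever $X$ is compact and nonsingular, hence $\rho([X]) = [X]$ in $\ggk\RVar$ for such $X$. Since $\rho$ is additive and, by resolution of singularities, the classes of compact nonsingular real varieties generate $\ggk\RVar$ as an abelian group --- precisely the reduction used in the proof of Remark~\ref{poin:inv:ext} --- we conclude that $\rho = \id$. Applying $\beta$ then gives $\beta^{\lim}([X(\puR)]) = \beta([X])$, which completes the proof.

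There is no genuine obstacle here, as the corollary is a formal consequence of the Lemma; the only point requiring care is that $\beta$, $\beta^{\puR}$, $\beta^{\lim}$ and $\rho$ are all additive, which is what legitimises checking the two identities on the generating set of compact nonsingular varieties.
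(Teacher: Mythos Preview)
Your proof is correct and follows the same line as the paper: the corollary is stated there as an immediate consequence of the preceding Lemma together with Remark~\ref{poin:inv:ext}, and you have simply made explicit the additivity argument that reduces to the compact nonsingular case. Your extra observation that in fact $\rho = \id$ on $\ggk\RVar$ (not just $\beta\circ\rho = \beta$) is a harmless strengthening.
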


However, the two homomorphisms do not coincide in general. Here is a counterexample:

\begin{exam}\label{exam:two:poin}
Consider the polynomial $f(x,y)=x^6+x^2y^2+y^6$ again. Let $X \sub \tilde \R^2$ be the $\puR$-variety given by the equation $f(x,y)=t$. Observe that we actually have $X \sub \MM(\puR)^2$ and hence $X$ is closed and bounded.

For any $t' \in \VF$ with $\rv(t') = \rv(t)$, there is an immediate automorphism $\sigma$ of $\puC$ over $\R$ with $\sigma(t') = t$, where ``immediate'' means that $\sigma$ fixes $\RV$ pointwise. Therefore, changing $t$ to $t'$ in the definition of $X$ does not change the value $\int [X(\puC)]$. It follows from compactness  that
\begin{equation}\label{fib:thick}
\int [\mdl X] = \int [\rv(t)^\sharp] \int [X(\puC)] =  [1]\int [X(\puC)]
\end{equation}
and hence, by (\ref{hen:sum}), $\int_{\puR} [\mdl X^1] = [1]\int_{\puR} [X]$, where again $\mdl X$, $\mdl X^1$ are the complex and  real nonarchimedean  Milnor fibers associated with $f$.


Now, since $X$ is  nonsingular and has only one connected component, it follows that $\beta^{\puR}([X])=1+u$.
On the other hand,
\[
(\beta \circ \Vol_{\puR})([\mdl X^1]) = (\beta \circ \Vol_{\puR})([X]) = \beta^{\lim}([X]).
\]
The expression $\Vol_{\puR}([\mdl X^1])$  may be understood as the motivic Milnor fiber $\bar{\mathscr S}^{1}$ of $f$ in (\ref{pos:mil:no:act}), taken  in $\ggk \RVar$. The computation towards the end of Example~\ref{mil:exam} shows that its virtual Poincar\'e polynomial  is $0$. So  $\beta^{\lim}([X]) \neq \beta^{\puR}([X])$.
\end{exam}


\section{In $T$-convex valued field}\label{sec:tcon}

It is also shown in \cite[\S~8]{hru:loe:lef} that one can recover, in a localization of $\gmv[\A^{-1}]$, the motivic zeta function and then the motivic Milnor fiber $\mathscr S$ of $f$ from its nonarchimedean Milnor fiber $\mdl X$. In \cite[Remark~8.5.5]{hru:loe:lef}, these results yield a proof, without using resolution of singularities but still using other sophisticated algebro-geometric machineries, that the Euler characteristic of $\mathscr S$ equals that of the topological Milnor fiber of $f$ (whether finer invariants such as the Hodge-Deligne polynomial can be recovered in this way is still unknown). In this section, we aim to prove this equality and its real analogue using a geometric argument at the level of $T$-convex sets instead. Moreover, as is already mentioned in Remark~\ref{eg:eb:differ}, in the real environment, the difference between the bounded and the geometric Euler characteristics in the $\Gamma$-sort is manifested as an equality relating  the Euler characteristics of the closed and the open topological Milnor fibers.

\subsection{The universal additive invariant}\label{intro:tcon}
We first summarize the main result of \cite{Yin:tcon:I}. To begin with, let $T$ be a complete polynomially bounded \omin-minimal \LT-theory extending the theory $\usub{\textup{RCF}}{}$ of real closed fields. It is not necessary in \cite{Yin:tcon:I}, but here we assume that $\R$ is a \T-model. Let $\mdl R \coloneqq (R, <, \ldots)$ be a nonarchimedean  \T-model containing $\R$ and $\OO \sub R$ be the convex hull of $\R$. Then $\OO$ is a \memph{proper} \T-convex subring of $\mdl R$ in the sense of \cite{DriesLew95}, that is, $\OO$  is a convex subring of $\mdl R$ such that, for every definable (no parameters allowed) continuous function $f : R \fun R$, we have $f(\OO) \sub \OO$. According to \cite{DriesLew95}, the theory $T_{\textup{convex}}$ of the pair $(\mdl R, \OO)$, suitably axiomatized in the language $\lan{}{convex}{}$ that extends $\lan{T}{}{}$ with a new unary relation symbol, is complete. We further assume that $T$ admits quantifier elimination and is universally axiomatizable, which can always be arranged through definitional extension. Then $T_{\textup{convex}}$ admits quantifier elimination too. It also follows that  $\R$ is an elementary \LT-substructure of $\mdl R$.


We may also view $\mdl R$ as an $\lan{}{RV}{}$-structure. To construct Hrushovski-Kazhdan style integrals in this environment, however, we need to work with a different language, which extends $\lan{}{RV}{}$. Since $1 + \MM$ is a convex subset of $R^{\times}$, the total ordering on $R^{\times}$ induces a total ordering on $\RV$. This turns $\RV$ into an ordered group and $\K$ into an ordered field. By the general theory of \T-convexity, there is a canonical way of turning $\K$ further into a \T-model, which is isomorphic to the \T-model  $\R$, with the isomorphism  given by the residue map $\res$. Let $\K^+$ be the set of positive elements of $\K$ (similarly for other totally ordered sets with a distinguished element), which forms a convex subgroup of $\RV$.

\begin{nota}\label{TG:euler}
Denote the quotient map $\RV \fun \Gamma \coloneqq \RV / \K^+$ by $\vrv$. The composition $\vv \coloneqq \vrv \circ \rv: R^\times \fun \Gamma$ is referred to as a \memph{signed} valuation map. The corresponding value group is a ``double cover'' of the  traditional value group. Consequently, the Euler characteristics, still denoted by $\chi_g$ and $\chi_b$, are slightly different from the ones in \cite[Remark~4.2]{HL:modified}.
\end{nota}

All of this structure can be expressed in a  two-sorted first-order language $\lan{T}{RV}{}$, in which $R$ is referred to as the $\VF$-sort and $\RV$ is taken as a new sort. The resulting theory $\TCVF$ (see \cite[Definition~2.7]{Yin:tcon:I}) is complete and weakly \omin-minimal, and admits quantifier elimination. Informally and for all practical purposes, the language $\lan{T}{RV}{}$ may be viewed as an extension of the language $\lan{}{convex}{}$.

\begin{exam}\label{exam:RtQ}
If $T = \RCF$ then we can turn $\puR$ into a model of $\TCVF$, with signed valuation, as follows. First note that $\rv$ is just the leading term map described in Example~\ref{exam:pui:C}, and  we may identify $\RV$ with $\Q \oplus \R^\times$. Then the ordering on $\RV$ is the same as the lexicographic ordering on $\Q \oplus \R^+$ or $\Q \oplus \R^-$ (but not both of them together due to the issue of sign). The quotient group $\Gamma = (\Q \oplus \R^\times) / \R^{+}$ is naturally isomorphic to the subgroup $\pm e^{\Q} \coloneqq e^{\Q} \cup - e^{\Q}$ of $\R^\times$, where $e=\exp(1)$, so that $\Q$ is identified with $e^\Q$ via the map $q \efun e^q$. Adding a new symbol $\infty$ to $\RV$, now we interpret $\puR$ as an $\lan{T}{RV}{}$-structure, with the signed valuation given by
\[
x \efun \rv(x) = (q, a_q) \efun \sgn(a_q)e^{-q},
\]
where $\sgn(a_q)$ is the sign of $a_q$. It is also a model of $\TCVF$: all the axioms in \cite[Definition~2.7]{Yin:tcon:I} are more or less immediately derivable from the valued field structure, except (Ax.~7), which holds since $\RCF$ is polynomially bounded, and (Ax.~8), which follows from \cite[Proposition~2.20]{DriesLew95}.
\end{exam}

Henceforth we assume $T = \RCF$ and work in the $\TCVF$-model $\puR$, with all parameters allowed. The reason that here all parameters are allowed is that we really gain nothing by restricting to $\bb S = \R \dpar t$ since, as has been remarked at the beginning of \S~\ref{section:spec:hen},  every element in $\puR$ is definable over $\R \dpar t$.

The categories $\VF[k]$, $\RV[k]$, $\RES[k]$, and $\RES$ are defined as before. Of course all notions are now formulated relative to $\TCVF$, in particular, ``definable'' means ``$\lan{T}{RV}{}$-definable,'' and so on. To distinguish them from the previous similar-looking categories, we shall write $\TVF[k]$, $\TRV[k]$, $\TRES[k]$, and $\TRES$ instead.

The $\Gamma$-categories contain subtle differences, though.

\begin{defn}[$\TG$-categories]\label{def:TGa:cat}
The objects of the category $\TG[k]$ are the finite disjoint unions of definable subsets of $\Gamma^k$. Any definable bijection between two such objects is a \memph{morphism} of $\Gamma[k]$. The category $\TG^{\fin}[k]$ is the full subcategory of $\TG[k]$ such that $I \in \TG^{\fin}[k]$ if and only if $I$ is finite.
\end{defn}

Every $\TG[k]$-morphism is definably a piecewise $\mgl_k(\Q)$-transformation; compare with \cite[Remark~3.15]{HL:modified}.

Observe that $\ggk \TG^{\fin}[k]$ is naturally isomorphic to $\Z$ for all $k$ and hence $\ggk \TG^{\fin}[*] \cong \Z[X]$. There is still a $\ggk \TG^{\fin}[*]$-linear map
\[
\Psi^T : \ggk \TRES[*] \otimes_{\ggk \TG^{\fin}[*] } \ggk \TG[*] \fun \ggk \TRV[*],
\]
which is an isomorphism of graded rings.

\begin{rem}[Explicit description of ${\ggk \TRES}$]\label{expl:res}
The semiring $\gsk \TRES$ is actually generated by isomorphism classes $[U]$ with $U$ a set in $\K^+$. We have the following explicit description of $\gsk \TRES$. Its underlying set is $(0 \times \N) \cup (\N^+ \times \Z)$, where the first coordinate indicates the dimension and the second the \omin-minimal Euler characteristic. For all $(a, b), (c, d) \in \gsk \TRES$,
\[
(a, b) + (c, d) = (\max\{a, c\}, b+d), \quad (a, b) \times (c, d) = (a + c, b \times d).
\]
The dimensional part is lost in the groupification $\ggk \TRES$ of $\gsk \TRES$, that is, $\ggk \TRES \cong \Z$, which is  much simpler than $\gsk \TRES$.
\end{rem}

The elements $[1]$, $\bm P$, and $[\A]$ in $\ggk \TRV[*]$, the lifting map $\bb L$, and the semiring congruence relation $\isp$ are also defined as before.

Proposition~\ref{base:Eb:Eg} still holds in the current environment:

\begin{prop}[{\cite[Proposition~4.24]{Yin:tcon:I}}]\label{prop:retr:TRES}
There are two ring homomorphisms
\[
\bb E_{g}^T: \ggk \TRV[*] \fun \ggk \TRES \cong \Z \quad \text{and} \quad \bb E^T_{b}: \ggk \TRV[*] \fun \ggk \TRES \cong \Z
\]
such that
\begin{itemize}[leftmargin=*]
  \item $\bm P - 1$ vanishes under both of them,
  \item for all $x \in \ggk \TRES[k]$ and all $y \in \ggk \TG[l]$,
  \begin{equation*}
    \bb E_{g}^T(x \otimes y) = (-1)^{k} \chi_g(y) x\dand \bb E_{b}^T(x \otimes y) = (-1)^l\chi_b(y)x.
  \end{equation*}
  where $x \otimes y$ stands in for the  element  $(\Psi^T)^{-1}(x \otimes y) \in \ggk \TRV[*]$.
\end{itemize}
\end{prop}

We can also write the last two equalities in a form that is not simplified so to make the similarity to Proposition~\ref{base:Eb:Eg} apparent (the classes are replaced by their Euler characteristics in the residue field):
\begin{equation*}
    \bb E_{g}^T(x \otimes y) = \chi_g(y) x (-1)^{l}(-1)^{-(k+l)} \dand \bb E_{b}^T(x \otimes y) = \chi_b(y)x (-1)^l 1^{-(k+l)}.
\end{equation*}
Note that $-1$ in the expression $(-1)^{l}$ is the Euler characteristic of the half torus (think $\R^+$), not the torus (think $\R^\times$); this is related to the use of signed valuation map, see Notation~\ref{TG:euler}. Both $\bb E^T_{b}$ and $\bb E^T_{g}$ will be relevant to  our construction below.

\begin{thm}[{\cite[Theorem~5.40]{Yin:tcon:I}}]\label{main:prop:tcon}
For each $k \geq 0$ there exists a canonical isomorphism of semigroups
\[
\int_{+}^T : \gsk  \TVF[k] \fun \gsk  \TRV[{\leq} k] /  \isp
\]
such that $\int_{+}^T [A] = [\bm U]/  \isp$ if and only if $[A] = [\bb L\bm U]$. Passing to the colimit yields a canonical isomorphism of semirings
\[
\int_{+}^T : \gsk \TVF_* \fun \gsk  \TRV[*] /  \isp.
\]
\end{thm}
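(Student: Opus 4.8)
The plan is to reduce Theorem~\ref{main:prop:tcon} to its known analogue over $\ACVF$, namely Theorem~\ref{main:prop}, by checking that the entire machinery that produces $\int_+$ in the $\lan{}{RV}{}$-setting has a faithful transcription in the $\lan{T}{RV}{}$-setting. The only real novelty is that the value group is now a $\mathbb{Q}$-vector space (the ``double cover'' of Notation~\ref{TG:euler}), that $\TCVF$ is weakly \omin-minimal rather than \cmin-minimal, and that the $\Gamma$-categories use $\mgl_k(\mathbb{Q})$ rather than $\mgl_k(\mathbb{Z})$ (Lemma~\ref{gam:pulback:mono}); none of these affects the formal structure of the argument. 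So first I would recall the three structural ingredients already in place above: the isomorphism $\Psi^T$ expressing $\ggk\TRV[*]$ as a tensor product $\ggk\TRES[*]\otimes_{\ggk\TG^{\fin}[*]}\ggk\TG[*]$ (stated just before Remark~\ref{expl:res}); the lifting map $\bb L:\TRV[{\leq}k]\to\TVF[k]$, $\bm U\mapsto U_f$, defined exactly as before; and the congruence $\isp$ generated by $([1],[\RV^{\circ\circ}_\infty])$. With these, the statement to be proved is precisely that $\bb L$ descends to a well-defined semigroup isomorphism $\gsk\TRV[{\leq}k]/\isp\xrightarrow{\ \sim\ }\gsk\TVF[k]$ with inverse $\int_+^T$.

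The core of the proof is then the two halves: (i) $\bb L$ induces a \emph{surjective} semiring homomorphism $\gsk\TRV[*]\to\gsk\TVF_*$, and (ii) the kernel congruence of this homomorphism is exactly $\isp$. For (i), surjectivity is a cell-decomposition statement: every definable $A\subseteq\VF^n$ with $\pvf\!\restriction A$ finite-to-one is, up to a definable bijection, of the form $\bb L\bm U$. This is the content of the ``fibration'' or RV-fibration lemmas; in the $\TCVF$ setting this is carried out in \cite{Yin:tcon:I} using the weakly \omin-minimal cell decomposition there, and I would cite \cite[Theorem~5.40]{Yin:tcon:I} (equivalently the relevant preparatory lemmas) for it, noting that the inductive descent on $\VF$-dimension via relatively unary maps is formally identical to the $\ACVF$ case, with ``valuative disc'' replaced by the appropriate convex definable sets. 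For (ii), one shows $\bb L([1])=[1+\MM]=[\MM]$ and $\bb L([\RV^{\circ\circ}_\infty])=[\MM]$ as well (the extra point $\infty$ lifts to $0\in\MM$), so $\bb L$ kills the generating pair of $\isp$; conversely, if $[\bb L\bm U]=[\bb L\bm V]$ in $\gsk\TVF_*$ then $[\bm U]\equiv[\bm V]\pmod{\isp}$, which is the substantive ``kernel'' computation and is again exactly \cite[Theorem~5.40]{Yin:tcon:I}. The grading and the passage to the colimit over $k$ are formal: the isomorphisms $\int_+^T$ for successive $k$ are compatible with the inclusions $\TRV[{\leq}k]\hookrightarrow\TRV[{\leq}k{+}1]$ and $\TVF[k]\hookrightarrow\TVF[k{+}1]$ because $\bb L$ commutes with these inclusions, so the colimit of semigroup isomorphisms is a semiring isomorphism (multiplicativity on the colimit follows from $\bb L(\bm U\times\bm V)=\bb L\bm U\times\bb L\bm V$ up to the standard reindexing).

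Concretely I would organize it as follows. Step 1: record that $\bb L$ is well-defined on isomorphism classes and additive, and that $\bb L(\isp)=0$ by the two identities above, so $\bb L$ factors through $\gsk\TRV[{\leq}k]/\isp$. Step 2: surjectivity of the induced map onto $\gsk\TVF[k]$ — invoke the $\TCVF$ RV-reduction (cell decomposition plus the relatively-unary induction) from \cite{Yin:tcon:I}. Step 3: injectivity — the kernel of $\bb L$ on $\gsk\TRV[{\leq}k]$ is contained in (hence equals) $\isp$; this is the deep part and is precisely the main theorem of \cite{Yin:tcon:I}, so I would cite it rather than reprove it. Step 4: set $\int_+^T$ to be the inverse, verify the stated characterization $\int_+^T[A]=[\bm U]/\isp\iff[A]=[\bb L\bm U]$ (immediate from Steps 2--3), and Step 5: take the colimit over $k$, checking compatibility with the inductive systems and multiplicativity, to get the semiring isomorphism $\int_+^T:\gsk\TVF_*\to\gsk\TRV[*]/\isp$.

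The main obstacle — or rather the place where one must be careful rather than where genuinely new work is needed — is Step 2/Step 3, i.e. the RV-reduction and kernel computation in the weakly \omin-minimal setting: one has to make sure the cell decomposition for $\TCVF$ handles the mixed $\VF$/$\RV$ sorts correctly and that the ``special bijections'' or blow-up maps used to trivialize the kernel still work when the value group is divisible and only weak \omin-minimality (not \cmin-minimality) is available. Since Theorem~\ref{main:prop:tcon} is quoted verbatim above as \cite[Theorem~5.40]{Yin:tcon:I}, the honest proof here is to assemble the already-cited pieces ($\Psi^T$, $\bb L$, the explicit $\gsk\TRES$ of Remark~\ref{expl:res}, and the retractions of Proposition~\ref{prop:retr:TRES}) and point to \cite{Yin:tcon:I} for the cell-decomposition input, emphasizing that every step of the $\ACVF$ argument of \cite{hrushovski:kazhdan:integration:vf} transfers once one replaces \cmin-minimality by the cell decomposition of \cite{Yin:tcon:I} and $\mathbb{Z}$-linearity in $\Gamma$ by $\mathbb{Q}$-linearity (Lemma~\ref{gam:pulback:mono}).
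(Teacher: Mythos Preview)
The paper does not give its own proof of this theorem: it is stated with the citation \cite[Theorem~5.40]{Yin:tcon:I} and nothing further. Your proposal is a reasonable expository outline of how the argument in \cite{Yin:tcon:I} runs, and you correctly identify that the substantive steps (surjectivity via $\TCVF$ cell decomposition and the kernel computation) must ultimately be drawn from that reference; in that sense your approach and the paper's are the same, namely to defer to \cite{Yin:tcon:I}.
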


\begin{thm}\label{gen:Euler}
There are a generalized Euler characteristic and two specializations to $\Z$:
\[
\chi^T_g, \chi^T_b : \ggk \TVF_* \to^{\int^T} \ggk \TRV[*] / (\bm P - 1) \two^{\bb E^T_{g}}_{\bb E^T_{ b}} \ggk \TRES \cong \Z.
\]
\end{thm}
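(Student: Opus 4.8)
The statement is essentially a matter of assembling three ingredients that are already in place, so the plan is to compose them carefully. First I would groupify the semiring isomorphism $\int_+^T : \gsk \TVF_* \fun \gsk \TRV[*]/\isp$ of Theorem~\ref{main:prop:tcon}. Exactly as in the passage from the semigroup statements to the ring statement in Theorem~\ref{main:prop}, the congruence $\isp$ is generated by the pair $([1], [\RV^{\circ\circ}_\infty])$ and, since $[\RV^{\circ\circ}_\infty] = [\RV^{\circ\circ}] + 1$, its groupification is precisely the principal ideal generated by $\bm P - 1 = [1] - [\RV^{\circ\circ}] - 1$. Hence $\int_+^T$ induces a canonical ring isomorphism $\int^T : \ggk \TVF_* \fun \ggk \TRV[*]/(\bm P - 1)$; it is a homomorphism of (ungraded) rings because the cartesian product induces the multiplication on $\gsk \TVF_*$ and both $\isp$ and $(\bm P - 1)$ are multiplicative.

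Second, I would invoke Proposition~\ref{prop:retr:TRES}: both $\bb E_g^T$ and $\bb E_b^T$ are ring homomorphisms on $\ggk \TRV[*]$ whose ranges are the zeroth graded pieces $(\ggk \TRES[*][[\A]^{-1}])_0$ and $(\ggk \TRES[*][[1]^{-1}])_0$, each of which is canonically isomorphic to $\ggk \TRES$, and by the explicit computation of Remark~\ref{expl:res} the latter is canonically isomorphic to $\Z$. Crucially, the same proposition records that $\bm P - 1$ vanishes under both maps, so each of them factors uniquely through the quotient $\ggk \TRV[*]/(\bm P - 1)$; I keep the notation $\bb E_g^T$, $\bb E_b^T$ for the induced maps. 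Third, I would form the composites $\chi^T_g = \bb E_g^T \circ \int^T$ and $\chi^T_b = \bb E_b^T \circ \int^T$ with values in $\ggk \TRES \cong \Z$; each is a composite of ring homomorphisms, hence a ring homomorphism, and since $\int^T$ is an isomorphism these are exactly the transports of the two retractions along $\int^T$. This is the content of the theorem. If one wishes, one then records, adapting Remark~\ref{eb:neq:eg:counter} to the signed valuation, that $\chi^T_g$ and $\chi^T_b$ are genuinely distinct, since they already disagree on the class of $[1]$, reflecting $\chi_g(\bm H) = -1 \neq 0 = \chi_b(\bm H)$.

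There is no serious obstacle here; the only point requiring a little care is the bookkeeping in the first step, namely that groupifying the semiring quotient $\gsk \TRV[*]/\isp$ yields $\ggk \TRV[*]/(\bm P - 1)$ rather than something larger. This follows from the universal property of groupification applied to a multiplicative congruence, exactly as in \cite{hrushovski:kazhdan:integration:vf} and in the proof of Theorem~\ref{main:prop}. Everything else is formal.
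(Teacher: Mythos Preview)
Your proposal is correct and matches the paper's approach: the paper states Theorem~\ref{gen:Euler} without proof, treating it as the immediate composite of Theorem~\ref{main:prop:tcon} (groupified) with the two retractions of Proposition~\ref{prop:retr:TRES}, together with Remark~\ref{expl:res} for $\ggk\TRES\cong\Z$. Your verification that the two characters differ is confirmed by the paper's Example~\ref{rem:MM}, where $\chi^T_g([\MM])=-1$ and $\chi^T_b([\MM])=1$.
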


\begin{exam}\label{rem:MM}
Let us compute the images of $[\MM]$ under these two generalized Euler characteristics. To begin with, $\int^T[\MM] = [1] / (\bm P - 1)$. Since $[1] + [\A] = 0$ in $\ggk \TRES[1]$, we have
\[
\chi^T_g([\MM]) = [1][\A]^{-1} = -1 \in (\ggk \TRES[*][[\A]^{-1}])_0 \cong \ggk \TRES.
\]
Similarly $\chi^T_b([\MM])=1$ in $(\ggk \TRES[*][[1]^{-1}])_0 \cong \ggk \TRES$. Thus  $\chi^T_g([\MM^+])= -1$ and $\chi^T_b([\MM^+])= 0$. It follows that, for any interval $(0,a]$ with $a \in \R^+$, by additivity, we have
\[
\chi^T_g([(0,a] \mi \MM^+])=1 \dand \chi^T_b([(0,a] \mi \MM^+])=0.
\]
\end{exam}

\begin{rem}\label{relate:Tcon:alg}
We can relate the isomorphism in Theorem~\ref{main:prop:tcon} to the ``purely algebraic'' isomorphism in Theorem~\ref{main:prop} via the following commutative diagram, extending (\ref{hen:sum}) with $\bb M = \puR$:
\begin{equation}\label{uniadd:to:tcon}
\bfig
 \hSquares(0,0)/->`->`->`->`->`->`->/<400>[{\ggk \VF_{\puR}}`{\ggk \RV_{\puR}[*] / (\bm P - 1)}`{\sggk \RES_{\puR}}`{\ggk \TVF_*}`{\ggk \TRV[*] / (\bm P - 1)}`{\ggk \TRES}; \int_{\puR}`\bb E_{b, \puR}````\int^T`\bb E^T_b]
\efig
\end{equation}
where the vertical arrows are all induced by the subcategory functors. If $\bb E_{b, \puR}$, $\bb E^T_b$ are replaced by $\bb E_{g, \puR}$, $\bb E^T_g$ then the diagram still commutes, and extends (\ref{hen:sum}) with   $\bb E_{g}$, $\bb E_{g, \puR}$ instead of $\bb E_{b}$, $\bb E_{b, \puR}$.
\end{rem}

\subsection{Link with the topological Milnor fiber}\label{sect:top}

Denote by $\Def_T$ the category of \LT-definable sets and  \LT-definable bijections. So $\Def_T$ is a subcategory of $\TVF_*$ and we have an induced  homomorphism
\[
\bm i : \ggk \Def_T \fun \ggk \TVF_*.
\]
Let $\chi :  \ggk \Def_T \fun \Z$ be the \omin-minimal Euler characteristic, which is an isomorphism; see \cite{dries:1998}. On the other hand, $\ggk \Def_T$ is also canonically isomorphic to $\ggk \TRES$ (Remark~\ref{expl:res}). Since $\chi$, $\chi_g^T \circ \bm i $, and $\chi_b^T \circ \bm i$ all agree on the class of the singleton $\{1\}$, they must be equal.

\subsubsection{The real case}\label{sec:Treal}

In semialgebraic geometry, the Borel-Moore homology is defined for locally compact semialgebraic sets and satisfies a long exact sequence, which gives rise to an additive and multiplicative Euler characteristic $\chi^{BM}$. It is equal to the Euler characteristic of the singular cohomology with compact supports, also only defined for locally compact semialgebraic sets. One can compute $\chi^{BM}$ on a cell decomposition, and the formula obtained can be used  to extend the definition of $\chi^{BM}$ to any semialgebraic set; see \cite[\S~1.8]{cost:RAS}. Consequently, $\chi^{BM}$ coincides with $\chi$ (this holds in general for any \omin-minimal theory, but we do not know a reference that contains a complete account of it).

\begin{rem}\label{eu:add}
The Euler characteristic coming from, say, the singular cohomology on semialgebraic sets, is not additive in general. It is however the case if one restricts to varieties over $\C$, for which it actually coincides with $\chi^{BM}$.
\end{rem}

\begin{nota}\label{nota:top:mil}
Let $X$, $f$, and $z$ be as in \S~\ref{subsec:milnor}.
Recall that the (positive) closed topological Milnor fiber is instantiated  by \LT-definable sets (in $\R$) of the form
$$
\bar F_{a,r} = \set{x\in X(\R) \given \norm{x-z} \leq r \text{ and } f(x) =a }, \quad 0<a \ll r  \ll 1,
$$
where $\norm{} : \VF^d \fun \VF $ denotes the Euclidean norm restricted to $\R$. The (positive) open topological Milnor fiber is similarly instantiated  by \LT-definable sets $F_{a,r}$, but with $\norm{x-z} \leq r$ replaced by $\norm{x-z} < r$.

Fix a $t\in \MM^+$. For each $r\in \VF^+$, the set $\bar{\mdl F}_r$ is defined as $\bar F_{a,r}$, but with $X(\R)$ replaced by $X(\VF)$ and $a$ by $t$ (since $t$ does not vary anymore, we drop it from the notation); similarly for $\mdl F_r$. So $\bar{\mdl F}_r$ is the topological closure of $\mdl F_r$. Let $\partial \bar{\mdl F}_r$ be the boundary of $\bar{\mdl F}_r$, that is,
$$
\partial \bar{\mdl F}_r = \bar{\mdl F}_r \mi \mdl F_r = \set{x \in X(\VF) \given \norm{x-z} = r \text{ and } f(x)=t }.
$$
Set $\mdl F = \bigcap_{r\in \UU^+} \bar{\mdl F}_r = \bigcap_{r\in \UU^+} \mdl F_r$, where $\UU = \OO \mi \MM$, or equivalently,
$$
\mdl F = \set{x \in X(\OO) \given \norm{x-z} \in \MM \text{ and } f(x)=t }.
$$
Since $\OO$ is the convex hull of $\R$, we can also write $\mdl F = \bigcap_{r\in \R^+} \bar{\mdl F}_r = \bigcap_{r\in \R^+} \mdl F_r$. Note that $\mdl F$ is definable but is in general not \LT-definable.
\end{nota}

\begin{prop}\label{propF}
The \omin-minimal Euler characteristic $\chi([\bar F_{a,r}])$ of the closed topological Milnor fiber is equal to $\chi^T_b ([\mdl F])$. Similarly, for the open topological Milnor fiber $F_{a,r}$,  we have $\chi([F_{a,r}]) = \chi_g^T ([\mdl F])$.
\end{prop}

The proof essentially consists of the following two lemmas.

\begin{lem}\label{lem:hardt}
If $r\in \UU^+$ is sufficiently small then, in $\ggk \TVF_*$,
$$
[\mdl F] = [\bar{\mdl F}_r]-[(0,r]\mi \MM^+][\partial \bar{\mdl F}_r] = [\mdl F_r]-[(0,r)\mi \MM^+][\partial \bar{\mdl F}_r].
$$
\end{lem}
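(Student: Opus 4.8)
\textbf{Proof sketch for Lemma~\ref{lem:hardt}.}

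The plan is to realize $\mdl F$ as an intersection of a definable decreasing family $\bar{\mdl F}_r$ and apply a Hardt-type triviality theorem. First I would recall that by Hardt's theorem for \LT-definable families (available in any \omin-minimal theory, see \cite{dries:1998}), the family $\set{\bar F_{a,r}}$, parametrized by $(a,r)$ with $0 < a \ll r \ll 1$, is definably trivial over a small definable neighborhood of $(0,0)$ in the parameter space; the same holds for the families $F_{a,r}$, $\partial \bar F_{a,r}$ and for the inclusions among them. This triviality is what underlies the very definition of the topological Milnor fiber, and it transfers verbatim from $\R$ to the \T-model $\mmdl$. The key consequence I want is that for a fixed sufficiently small $r\in\UU^+$, the difference $\bar{\mdl F}_r \mi \mdl F$ is definably a trivial fiber bundle over the interval $(0,r]\mi\MM^+$ of ``intermediate radii'' with fiber (definably homeomorphic to) $\partial\bar{\mdl F}_r$. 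Concretely, $\bar{\mdl F}_r \mi \mdl F = \set{x\in X(\VF) \given \norm{x-z}\in (0,r]\mi\MM,\ f(x)=t}$, and Hardt triviality applied to the norm function on $\bar{\mdl F}_r$ gives a definable bijection of this set with $((0,r]\mi\MM^+)\times\partial\bar{\mdl F}_r$.

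Next I would extract the scissor relations. In $\ggk\TVF_*$ we have the disjoint decomposition $\bar{\mdl F}_r = \mdl F \,\sqcup\, (\bar{\mdl F}_r\mi\mdl F)$; since $\mdl F$ is definable (Notation~\ref{nota:top:mil}) these are legitimate classes, and the class identity $[\bar{\mdl F}_r] = [\mdl F] + [(0,r]\mi\MM^+][\partial\bar{\mdl F}_r]$ follows from the definable trivialization just described. This gives the first displayed equality after rearranging. For the second, note $\bar{\mdl F}_r = \mdl F_r \,\sqcup\, \partial\bar{\mdl F}_r$ and $(0,r]\mi\MM^+ = \big((0,r)\mi\MM^+\big)\,\sqcup\,\{r\}$, so
\[
[\mdl F] = [\bar{\mdl F}_r] - [(0,r]\mi\MM^+][\partial\bar{\mdl F}_r]
= [\mdl F_r] + [\partial\bar{\mdl F}_r] - \big([(0,r)\mi\MM^+] + 1\big)[\partial\bar{\mdl F}_r]
= [\mdl F_r] - [(0,r)\mi\MM^+][\partial\bar{\mdl F}_r],
\]
as claimed.

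The one genuine subtlety — and the main obstacle — is verifying that ``$r$ sufficiently small'' can be chosen uniformly so that the trivialization is valid \emph{across the whole interval of intermediate radii down into $\MM$}, i.e.\ that $\mdl F$ really is the common fiber of the family $\bar{\mdl F}_{r'}$ for all $r'\in\UU^+$ with $r'\le r$. This is where the hypothesis that $\OO$ is the convex hull of $\R$ is used: the curve-selection/triviality data is \LT-definable over $\R$, hence its domain of validity is an \LT-definable neighborhood of $0$, which necessarily contains some standard $r\in\R^+$ and therefore all of $\UU^+$ below it; by \omin-minimality the local triviality of $\norm{}$ on $\bar{\mdl F}_r$ over $(0,r]$ has only finitely many ``jumps'', all standard, so it is trivial over an initial segment that swallows $\MM^+$. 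Thus $\mdl F=\bigcap_{r'\in\R^+}\bar{\mdl F}_{r'}$ is exactly the fiber over any infinitesimal radius, and the bundle structure on $\bar{\mdl F}_r\mi\mdl F$ is the restriction of the standard one. Once this uniformity is in hand the rest is the elementary scissor bookkeeping above.
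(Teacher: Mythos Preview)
Your approach is essentially the same as the paper's: both apply \omin-minimal trivialization (Hardt) to the family $\{\partial\bar{\mdl F}_{r'}\}_{r'}$, identify $\bar{\mdl F}_r\smallsetminus\mdl F$ with $((0,r]\smallsetminus\MM^+)\times\partial\bar{\mdl F}_r$, and read off the scissor relation. Your derivation of the second equality from the first is exactly what the paper means by ``clear''.

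The one place where your argument goes astray is the last paragraph. The claim that the triviality ``jumps'' are ``all standard'' is false: the family $r'\mapsto\partial\bar{\mdl F}_{r'}$ depends on the infinitesimal parameter $t$, so the breakpoints of its trivialization are only $t$-definable in $\mdl R$, not $\R$-definable. (For instance, if $f(x)=\|x\|^2$ then $\bar{\mdl F}_{r'}=\varnothing$ for $r'<\sqrt t$, a genuinely infinitesimal jump.) Invoking the two-parameter family over $\R$ does not save this, since slicing an $\R$-definable cell decomposition of the $(a,r)$-plane at $a=t$ again produces $t$-definable endpoints. The correct (and simpler) justification, which is what the paper uses, is that the trivialization partitions $\VF^+$ into finitely many $\lan{T}{}{}$-definable intervals, while the cut determined by $\MM^+$ is not realized by any point of $\VF$; hence some interval $[a,b]$ must have $a\in\MM^+$ and $b\notin\MM^+$. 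One then takes $r=b$. No appeal to ``standardness'' of the breakpoints is needed or available.
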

\begin{proof}
The second equality is clear. For the first equality, we shall think of the \LT-definable subset
$A =\bigcup_{r\in \VF^+} r \times \partial \bar{\mdl F}_r$ of $\VF \times \VF^d$
as a fibration over $\VF^+$. By \omin-minimal trivialization (see \cite[\S~9.2.1]{dries:1998}), there exists an interval $[a, b] \sub \VF^+$ such that the sets $[a,b] \cap \MM$, $[a,b] \mi \MM$ are both nonempty and the fibration $A$ is \LT-definably trivial over $[a,b]$, that is, there is an \LT-definable homeomorphism
\[
 h:[a,b]\times \partial \bar{\mdl F}_b \fun \bigcup_{r\in [a,b]} r \times \partial \bar{\mdl F}_r,
\]
compatible with the projections onto $[a,b]$. Now, by additivity, it suffices to compute $[\bar{\mdl F}_b \mi \mdl F]$ in ${\ggk}{\TVF_*}$. Since $h$ induces a definable bijection between $\bar{\mdl F}_b \mi \mdl F$ and the product $((0,b]\mi \MM^+) \times \partial \bar{\mdl F}_b$, the desired equality follows.
\end{proof}

\begin{lem}\label{lem:same:Euler}
$\chi([\bar F_{a,r}]) = \chi([\bar{\mdl F}_r])$ and $\chi( [F_{a,r}]) = \chi([\mdl F_r])$.
\end{lem}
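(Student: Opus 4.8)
\textbf{Proof proposal for Lemma~\ref{lem:same:Euler}.}

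The statement asserts that replacing the generic archimedean parameters $(a,r)$ (with $0<a\ll r\ll 1$ in $\R$) by the nonstandard pair $(t,r)$, with $t\in\MM^+$ fixed and $r$ now ranging over $\VF^+$, does not change the \omin-minimal Euler characteristic of the closed (resp. open) topological Milnor fiber. The plan is to exploit the definability of the whole family over the parameter space and the fact that $\chi$ is a definable invariant that is locally constant in definable families. First I would set up the \LT-definable family
\[
\mdl B = \set{(a,r,x) \in \VF^+ \times \VF^+ \times X(\VF) \given \norm{x-z}\leq r \tand f(x)=a},
\]
viewed as fibered over the parameter plane $\VF^+\times\VF^+$ via $(a,r,x)\efun(a,r)$, together with its open analogue $\mdl B^{\circ}$ using $\norm{x-z}<r$. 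By \omin-minimal cell decomposition of $\mdl B$ relative to the projection onto $(a,r)$, there is a finite partition of $\VF^+\times\VF^+$ into \LT-definable cells over each of which the fibers $\bar F_{a,r}$ form a \LT-definably trivial fibration; on each such cell the function $(a,r)\efun\chi([\bar F_{a,r}])$ is constant. This uses \cite[\S~9.2.1]{dries:1998} in the form already invoked in the proof of Lemma~\ref{lem:hardt}.

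Next I would identify which cell contains the relevant points. The classical Milnor data condition $0<a\ll r\ll 1$ says precisely that $(a,r)$ lies in an \LT-definable set of the form $\set{0<a<\varepsilon(r),\ 0<r<r_0}$ for suitable \LT-definable $\varepsilon$ and constant $r_0$; shrinking, we may assume this region meets only one cell $C$ of the decomposition, so $\chi([\bar F_{a,r}])$ equals a single integer $c$ for all such archimedean $(a,r)$. The point is now that the nonstandard pair $(t,r)$, for $r\in\UU^+$ sufficiently small, also lies in $C$: indeed $C$ is \LT-definable, the condition ``$(a,r)\in C$'' is expressed by an \LT-formula, and since $t\in\MM^+$ while $r$ is taken in $\UU^+$ with $r$ small enough that the pair $(t,r)$ respects the same inequalities $t<\varepsilon(r)$, $r<r_0$ (which hold because $\MM^+$ lies below every positive element of $\R$ and $\varepsilon(r)\in\R^+$ for $r\in\R^+$, and then by saturation/transfer for $r\in\UU^+$), the pair $(t,r)$ satisfies the defining formula of $C$. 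Therefore $\bar{\mdl F}_r = \bar F_{t,r}$ is \LT-definably homeomorphic, inside the family, to $\bar F_{a,r'}$ for archimedean $(a,r')\in C$, and hence $\chi([\bar{\mdl F}_r]) = c = \chi([\bar F_{a,r}])$. The open case is identical, working with $\mdl B^{\circ}$ and $F_{a,r}$, $\mdl F_r$.

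I expect the main obstacle to be the transfer step: making precise that ``sufficiently small $r\in\UU^+$'' lands in the cell $C$ that was cut out using archimedean data. The clean way is to phrase everything over the \LT-substructure $\R$ first — obtaining the decomposition, the cell $C$, and the constancy of $\chi$ on $C$ as statements true in $\R$ — and then invoke that $\R$ is an elementary \LT-substructure of $\mmdl$ (noted in \S~\ref{sec:tcon}), so the same \LT-formulas define the corresponding objects in $\mmdl$, and the triviality homeomorphism persists. One must check that the defining inequalities of $C$ (of the shape $a<\varepsilon(r)$, $r<r_0$) are satisfied by $(t,r)$: for any $r\in\R^+$ with $r<r_0$ this is the classical statement, and for $r\in\UU^+$ infinitesimally close to such an $r$ one uses that $\varepsilon$ is continuous and $\varepsilon(r)\in\UU^+$ is not infinitesimal while $t$ is, so $t<\varepsilon(r)$ still holds; hence $(t,r)\in C(\mmdl)$. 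Granting this, the \LT-definable trivialization over $C$ restricts to give the required \LT-definable bijection $\bar{\mdl F}_r \cong \bar F_{a,r}$ up to \LT-definable homeomorphism, and equality of $\chi$ follows since $\chi$ is invariant under \LT-definable bijections.
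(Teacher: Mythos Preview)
Your proposal is correct and rests on the same idea as the paper: \omin-minimal trivialization of the family of Milnor-type fibers together with the fact that $\R$ is an elementary $\lan{T}{}{}$-substructure of $\mdl R$. The one difference worth noting is in the final step. You insist that the trivialization be carried out over $\R$, so that the cell $C$ is $\R$-definable, and then argue directly that the nonstandard pair $(t,r)$ satisfies the defining inequalities of $C$. The paper instead fixes the real $r$, transfers $\bar F_{a,r}$ to $\mdl R$ (same $\chi$ by cell decomposition), trivializes in $\mdl R$ over the single parameter $a'$ to find \emph{some} $t'\in\MM^+$ in the same trivialization piece as $a$, and then closes with an automorphism of $\mdl R$ over $\R$ taking $t'$ to $t$ (they realize the same cut in $\R$). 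Your route avoids the automorphism/type step at the cost of tracking where the trivialization parameters live; the paper's route is indifferent to those parameters but uses one extra homogeneity fact.
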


Note that all the occurrences of $\chi$ here stand for the \omin-minimal Euler characteristic, but on one side of the equality it is taken in $\R$, and in $\mdl R$ on the other side.

\begin{proof}
Considering $\bar F_{a,r}$ as a definable set in $\mdl R$, it has the same Euler characteristic (since any cell decomposition in $\R$ is also a cell decomposition in $\mdl R$) and, by \omin-minimal trivialization, there is a $t' \in \MM^+$ such that $\chi([\bar F_{a,r}]) = \chi([\bar{\mdl F}'_r])$, where $\bar{\mdl F}'_r$ is defined as $\bar{\mdl F}_r$ but with $t$ replaced by $t'$. Since $t$, $t'$ make the same cut in $\R$, there is an automorphism of $\mdl R$ over $\R$ mapping $\bar{\mdl F}_r$ to $\bar{\mdl F}'_r$. The first equality follows. The second equality is similar.
\end{proof}


\begin{proof}[Proof of Proposition~\ref{propF}]
By Lemma~\ref{lem:same:Euler}, we may show  $\chi^T_b ([\mdl F])=\chi([\bar{\mdl F}_{r}])$ and $\chi_g^T ([\mdl F])=\chi([\mdl F_{r}])$ instead. This is immediate by Example~\ref{rem:MM} and Lemma \ref{lem:hardt}.
\end{proof}

Recall the (positive) nonarchimedean Milnor fiber $\mdl X^{1}$ from Remark~\ref{comp:real:non}, which, in the presence of the Euclidean norm, may now be written as
\[
\set{x \in X(\OO) \given \norm{x-z} \in \MM \text{ and } \rv(f(x)) = \rv(t) }.
\]

\begin{thm}\label{thmX}
We can recover the Euler characteristics of the closed and the open topological Milnor fibers by applying $\chi_b^T$, $\chi_g^T$ to  $\mdl X^{1}$. More precisely,
\[
\chi_b^T ([\mdl X^{1}]) = \chi([\bar{F}_{a,r}]) \dand \chi_g^T ([\mdl X^{1}]) = -\chi([F_{a,r}]).
\]
\end{thm}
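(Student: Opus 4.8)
The plan is to reduce the statement to Proposition~\ref{propF} by showing that the passage from $\mdl F$ to $\mdl X^{1}$ multiplies $\chi^T_b$ by $1$ and $\chi^T_g$ by $-1$. Recall that $\mdl F = \set{x \in X(\OO) \given \norm{x-z} \in \MM \tand f(x) = t}$, whereas $\mdl X^{1} = \set{x \in X(\OO) \given \norm{x-z} \in \MM \tand \rv(f(x)) = \rv(t)}$; thus $f$ restricts to a definable map $\mdl X^{1} \fun \rv(t)^\sharp$ whose fiber over $a \in \rv(t)^\sharp$ is $\mdl F^{(a)} \coloneqq \set{x \in X(\OO) \given \norm{x-z} \in \MM \tand f(x) = a}$, with $\mdl F = \mdl F^{(t)}$.

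First I would establish the $\TCVF$-analogue of Lemma~\ref{rv:int:bun}: in $\ggk \TRV[*] / (\bm P - 1)$,
\[
\int^T [\mdl X^{1}] = \int^T [\MM] \cdot \int^T [\mdl F].
\]
The argument is the one used for Lemma~\ref{rv:int:bun} and equation~(\ref{fib:thick}). Since for $p \in \R[x]$ the leading term $\rv(p(a))$ depends only on $\rv(a)$ whenever $a \in \MM$, any two elements $a, a' \in \rv(t)^\sharp$ have the same type over $\R$ by quantifier elimination for $\TCVF$; passing to a sufficiently saturated $\TCVF$-model (which changes neither the Grothendieck rings nor $\chi^T_b$, $\chi^T_g$), this type equality extends to an automorphism over $\R$, which fixes $X$, $f$, $z$ and $\OO$ and hence carries $\mdl F^{(a)}$ onto $\mdl F^{(a')}$. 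So $\int^T [\mdl F^{(a)}]$ is independent of $a \in \rv(t)^\sharp$, and by compactness together with Theorem~\ref{main:prop:tcon} the class $\int^T [\mdl X^{1}]$ is $\int^T [\mdl F]$ times the factor $\int^T [\rv(t)^\sharp]$ contributed by the disc; finally $\rv(t)^\sharp \cong 1 + \MM \cong \MM$ via the definable bijections $x \efun x/t$ and $x \efun x - 1$, so $\int^T [\rv(t)^\sharp] = \int^T [\MM]$ in $\ggk \TVF_*$ and hence in $\ggk \TRV[*] / (\bm P - 1)$.

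Next I would apply the ring homomorphisms $\chi^T_b = \bb E^T_b \circ \int^T$ and $\chi^T_g = \bb E^T_g \circ \int^T$ (Theorem~\ref{gen:Euler}, Proposition~\ref{prop:retr:TRES}) to the displayed identity. Being ring homomorphisms, they yield $\chi^T_b([\mdl X^{1}]) = \chi^T_b([\MM]) \, \chi^T_b([\mdl F])$ and $\chi^T_g([\mdl X^{1}]) = \chi^T_g([\MM]) \, \chi^T_g([\mdl F])$; by Example~\ref{rem:MM}, $\chi^T_b([\MM]) = 1$ and $\chi^T_g([\MM]) = -1$, so $\chi^T_b([\mdl X^{1}]) = \chi^T_b([\mdl F])$ and $\chi^T_g([\mdl X^{1}]) = -\chi^T_g([\mdl F])$. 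Combining with Proposition~\ref{propF}, which identifies $\chi^T_b([\mdl F])$ with $\chi([\bar F_{a,r}])$ and $\chi^T_g([\mdl F])$ with $\chi([F_{a,r}])$, gives the theorem.

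The main obstacle is the first step: making rigorous that the family $\set{\mdl F^{(a)}}_{a \in \rv(t)^\sharp}$ is constant up to $\int^T$ and that this extracts precisely the factor $\int^T[\MM]$. This is exactly the point handled in the complex setting by the immediate-automorphism argument of Example~\ref{exam:two:poin}. The only genuinely new bookkeeping is that automorphisms of a real closed field are rigid, so one must pass to a saturated $\TCVF$-model and invoke quantifier elimination and homogeneity to produce the required automorphism over $\R$, and one should check that the signed valuation of the $\TCVF$-setting introduces no sign discrepancy — it does not, since $\rv(t)^\sharp$ is a single disc on which $f$ has constant sign, and the relevant normalizations are already built into Example~\ref{rem:MM}. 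Everything else is formal, given Theorem~\ref{main:prop:tcon}, Proposition~\ref{prop:retr:TRES}, Example~\ref{rem:MM}, and Proposition~\ref{propF}.
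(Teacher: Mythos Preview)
Your proposal is correct and follows essentially the same route as the paper. The paper isolates the key equality as Lemma~\ref{recov:top:euler}, namely $\int^T [\mdl X^{1}] = [1]\int^T [\mdl F]$ in $\ggk \TRV[*]/(\bm P-1)$, and then concludes from $\bb E^T_b([1]) = 1$, $\bb E^T_g([1]) = -1$ together with Proposition~\ref{propF}; since $\int^T[\MM] = [1]/(\bm P-1)$, your factor $\int^T[\MM]$ and appeal to Example~\ref{rem:MM} are just the $\VF$-side phrasing of the same computation.

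One precision is worth flagging: the automorphism you invoke should fix not merely $\R$ but all of $\RV$ (i.e.\ be \emph{immediate}), since $\int^T$ lands in $\ggk \TRV[*]/(\bm P-1)$ and an automorphism that moved $\RV$ could a priori move the integral. You do acknowledge this when you point to the immediate-automorphism argument of Example~\ref{exam:two:poin}; the paper makes it explicit by citing \cite[Lemma~2.22]{Yin:tcon:I} for the existence of such automorphisms in the $\TCVF$ setting, which is what your sketch ``same type over $\R$, pass to a saturated model'' should be upgraded to.
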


This follows from the equality:

\begin{lem}\label{recov:top:euler}
In $\ggk \TRV[*] / (\bm P - 1)$, $\int^T [\mdl X^{1}] = [1]\int^T [\mdl F]$.
\end{lem}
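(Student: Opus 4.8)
\textbf{Proof plan for Lemma~\ref{recov:top:euler}.}

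The plan is to reduce the desired identity $\int^T [\mdl X^{1}] = [1]\int^T [\mdl F]$ to a fibration argument over the leading-term direction, exactly parallel to the computation behind (\ref{fib:thick}) in the algebraic setting, but carried out now inside $\TCVF$. First I would observe that $\mdl X^{1}$ fibers over the set $\rv(t)^\sharp \sub \VF$ via the map $x \efun f(x)$: writing $\mdl X^{1}_a = \set{x \in X(\OO) \given \norm{x-z} \in \MM \tand f(x) = a}$ for $a \in \rv(t)^\sharp$, we have $\mdl X^{1} = \bigcup_{a \in \rv(t)^\sharp} \mdl X^1_a$. The key point is that all these fibers are definably isomorphic in $\TVF_*$ and, moreover, each is isomorphic to $\mdl F$. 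Indeed, for any $a, a' \in \VF$ with $\rv(a) = \rv(a') = \rv(t)$ there is an immediate automorphism $\sigma$ of $\mmdl$ over the relevant parameters — one that fixes $\RV$ pointwise (such automorphisms exist because $\rv(a/a') = 1$, i.e.\ $a/a' \in 1 + \MM$, and the valued field is sufficiently saturated) — with $\sigma(a) = a' = t$; since $X$, $f$, $\norm{\cdot}$, and $z$ are all defined over the base, $\sigma$ carries $\mdl X^1_{a'}$ to $\mdl F$. Hence in $\ggk \TVF_*$ the class $[\mdl X^1_a]$ is constant, equal to $[\mdl F]$, over $a \in \rv(t)^\sharp$.

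Next I would package this fiberwise statement into the integral. By Theorem~\ref{main:prop:tcon} (the \T-convex isomorphism $\int^T$), applied with parameters, there is $\bm U \in \TRV[*]$ with $\int^T [\mdl F] = [\bm U]/\isp$ and $[\mdl F] = [\bb L \bm U]$; by the uniformity just established (and a compactness argument to make the witnessing definable bijections $\mdl X^1_a \cong \mdl F$ uniform in $a$, as in the proof of Lemma~\ref{muhat:section}), the fibered set $\mdl X^1$ is in definable bijection, over $\rv(t)^\sharp$, with $\rv(t)^\sharp \times \bb L \bm U$, i.e.\ with $\bb L(\rv(t)^\sharp \times \bm U)$ where $\rv(t)^\sharp$ is regarded as the object $([1], \id) \in \RV[1]$ after translating $\rv(t)$ to $1$ by a further immediate automorphism. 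Therefore $\int^T [\mdl X^1] = [1]\cdot [\bm U]/\isp = [1]\int^T[\mdl F]$ in $\ggk \TRV[*]/\isp$, and passing to the quotient by $(\bm P - 1)$ gives the stated equality in $\ggk \TRV[*]/(\bm P - 1)$.

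The main obstacle is the uniformity: I must ensure that the family of isomorphisms $\mdl X^1_a \cong \mdl F$ can be chosen definably and uniformly in the parameter $a \in \rv(t)^\sharp$, so that the fibered union $\mdl X^1$ genuinely becomes a product $\rv(t)^\sharp \times \mdl F$ up to definable bijection rather than merely having fibers of constant class. This is where Theorem~\ref{main:prop:tcon} together with compactness does the work — one lifts a quantifier-free formula witnessing $\int^T[\mdl X^1_a] = \int^T[\mdl F]$ for a generic $a$ and spreads it over a definable neighborhood, then covers $\rv(t)^\sharp$ by finitely many such pieces and reassembles; the only subtlety is checking that the translation automorphisms used to normalize $a$ to $t$ and $\rv(t)$ to $1$ are immediate (fix $\RV$, or the relevant part of it, pointwise) so that they descend correctly through $\bb L$ and do not disturb the $\RV$-side bookkeeping. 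Everything else is the same formal manipulation of $\int^T$, $\bb L$, and $\isp$ already used for (\ref{fib:thick}) and Lemma~\ref{rv:int:bun}.
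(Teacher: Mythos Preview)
Your proposal is correct and follows essentially the same approach as the paper: fiber $\mdl X^{1}$ over $\rv(t)^\sharp$ via $f$, use immediate automorphisms to show that each fiber has the same integral as $\mdl F$, and invoke compactness to obtain the product identity $\int^T[\mdl X^1] = [\rv(t)^\sharp]\int^T[\mdl F] = [1]\int^T[\mdl F]$. The paper's own proof is a two-line pointer back to the argument behind (\ref{fib:thick}), with the only new ingredient being the citation of \cite[Lemma~2.22]{Yin:tcon:I} for the existence of immediate automorphisms in the \T-convex setting; your appeal to saturation is the right instinct, but in the concrete model $\mmdl$ one should cite that lemma rather than assume saturation. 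One small slip: you cannot ``translate $\rv(t)$ to $1$ by a further immediate automorphism'' --- immediate automorphisms fix $\RV$ pointwise, and $\rv(t)$, $1$ lie in different $\vrv$-fibers --- but this is harmless, since $\{\rv(t)\}$ and $\{1\}$ are already isomorphic singletons in $\TRV[1]$ by an ordinary definable bijection.
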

\begin{proof}
The argument has already been given in Example~\ref{exam:two:poin}. In the current setting, the immediate automorphisms in question are provided by \cite[Lemma~2.22]{Yin:tcon:I}.
\end{proof}

\begin{proof}[Proof of Theorem~\ref{thmX}]
Since $\bb E^T_b([1]) = 1$ and $\bb E^T_g([1]) = -1$, this is immediate by Lemma~\ref{recov:top:euler} and Proposition \ref{propF}.
\end{proof}

\begin{rem}\label{recov:xb:xg}
Composing the two diagrams  (\ref{hen:sum}), (\ref{uniadd:to:tcon}) together, we  recover the real motivic Milnor fiber of $f$ as $\Vol_{\puR}([\mdl X^{1}])$ (this is not written $\Vol_{\puR}^{\mu_2}([\mdl X^{1}])$ as suggested in Remark~\ref{comp:real:non} because all parameters are allowed in the current setting, which kills all the $\mu_2$-actions) and its Euler characteristic as $\chi^T_b([\mdl X^{1}])$. In parallel with \cite[Remark~8.5.5]{hru:loe:lef}, the latter, by the preceding discussion, is equal to the (Borel-Moore) Euler characteristic of the closed topological Milnor fiber of $f$. This result has been previously obtained in \cite[Theorem 4.12]{Comte:fichou}, whose method involves heavy dosage of resolution of singularities.

We have also recovered the Euler characteristic of the open topological Milnor fiber of $f$ (up to sign) from $\mdl X^{1}$ (for other method, see \cite[Remark 4.10]{Comte:fichou}), but this happens solely in the \T-convex environment and, unlike the closed topological Milnor fiber, whether it comes from a geometric object, dual to $\Vol_{\puR}([\mdl X^{1}])$ in some sense, or not, is unclear; see Remark~\ref{g:comm:alm}. The following equality might be a faint trace of this perceived duality.
\end{rem}

\begin{cor}\label{link:open:closed}
$\chi([\bar{F}_{a,r}]) = (-1)^{d-1} \chi([F_{a,r}])$
\end{cor}
\begin{proof}
This is immediate from (\ref{off:fac:A}) and Theorem~\ref{thmX}.
\end{proof}

This result has  also been obtained in \cite[Theorem~4.4]{Comte:fichou}. It would be very interesting to categorify this equality, that is, lifting it to one between homology groups. One conceivable way to do this, as suggested by the work in \cite{fichou:shiota:pui}, is to develop a sort of homology theory for definable sets in   $\puR$, or in $\puC$, which might also shed light on the mystery alluded to in Remark~\ref{g:comm:alm}.  The existence of such a theory, however, is purely hypothetical.

\begin{exam}
Consider the polynomial function $f(x,y)=x^py^q$ on the affine plane, where $p,q \in \Z^+$, and take $z$ to be the origin. Let $m = \gcd(p, q)$. Without loss of generality, $p /m$ is odd. Then the assignment $(x,y)\efun (x^{p/m}y^{q/m},y)$ gives a definable bijection between $\mdl X^1$ and
$$
\set{(x,y)\in \MM^2 \given \rv(x^m) =\rv(t) \tand 0 <\vv(y) < 1/q},
$$
which means that the integral $\int_{\puR}[\mdl X^1]/ (\bm P - 1)$ works out at
\[
[\{x^m=\rv (t)\}] \otimes [(0,1/q)^\sharp] \in \ggk \RES_{\puR}[1] \otimes \ggk \Gamma_{\puR}[1].
\]
So, in $\ggk \RVar[[\A]^{-1}]$,  we have
\[
\Vol_{\puR,b} ([\mdl X^1])=-[\G_m][\{x^m=1\}] \dand \Vol_{\puR,g} ([\mdl X^1])=-[\G_m][\{x^m=1\}][\A]^{-2},
\]
where the extra letters in the subscripts indicate which Euler characteristic is being used. Set $m'=1$ if $m$ is odd and $m'=2$ if $m$ is even. Then $\chi_b^T ([\mdl X^{1}]) = 2m'$ is the Euler characteristic of the closed topological Milnor fiber and $- \chi_g^T ([\mdl X^{1}])=-2m' $ is the Euler characteristic of the open topological Milnor fiber.
\end{exam}

The last vertical arrow in (\ref{uniadd:to:tcon}) and $\Theta_{\tilde \R}$ in (\ref{theta:puR}) (forgetting the $\mu_2$-actions) induce a homomorphism
\[
\ggk \RVar[[\A]^{-1}] \cong \sggk \RES_{\tilde \R}[[\A]^{-1}] \fun \ggk \TRES \cong \Z,
\]
which is just the semialgebraic Euler characteristic. Applying it termwise to the coefficients of the zeta function as defined in (\ref{real:zeta}), but using $\Xi_{\R} \circ \Phi$ instead of $\Xi$ (recall  (\ref{tauhat:forget})), we obtain a power series $Z^{top}(T)$ in $\Z \dbra T$. This series is, up to sign, the positive topological zeta function considered in \cite{koiParu:moti}. In more detail, for each $m \geq 1$, let $\mdl X_{m}^+$ be the following set of  truncated arcs at $z$:
\[
\set{ \varphi \in X(\R[t] / t^{m+1}) \given f(\varphi) = a t^m \mod t^{m+1} \text{ with } a \in \R^+ \tand \varphi(0) = z }.
\]

\begin{prop}
$Z^{top}(T) = -\sum_{m\geq 1} (-1)^{md} \chi (\mdl X_{m}^+) T^m \in \Z \dbra T$.
\end{prop}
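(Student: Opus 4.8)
The plan is to compute the coefficients of $Z^{top}(T)$ in two ways and match them. By definition, $Z^{top}(T)$ is obtained from $Z(\int^{\diamond}[\mdl X])(T)$ by applying termwise the homomorphism $\gsv[[\A]^{-1}] \fun \ggk \RVar[[\A]^{-1}] \cong \sggk \RES_{\puR}[[\A]^{-1}] \fun \ggk \TRES \cong \Z$ induced by the last vertical arrow of (\ref{uniadd:to:tcon}), composed with forgetting the $\mu_2$-actions. So the $m$-th coefficient of $Z^{top}(T)$ is the image of $(\Xi \circ \bm \eta \circ \bm h_m)([\bm U])$, where $[\bm U]/(\bm P_\Gamma) = \int^{\diamond}([\mdl X])$. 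First I would use (\ref{z:coeff:change}), which identifies $(\bm \eta \circ \bm h_m \circ \int^{\diamond})([\mdl X])$ with $[\mdl X_m^1 \otimes_{\R}\C][\A]^{-md}$ in $\gdv[[\A]^{-1}]$; applying $\Xi$ (localized at $[\A]$) and then forgetting $\mu_2$-actions, the $m$-th coefficient of $Z^{top}(T)$ is the image of $[\mdl X_m^1][\A]^{-md}$ under the Euler characteristic map $\ggk \RVar[[\A]^{-1}] \fun \Z$. Since $[\A]$ maps to the Euler characteristic of the affine line over $\R$, which is $-1$ (or, more precisely, since in $\ggk \TRES$ we have $[1] + [\A] = 0$ by Remark~\ref{expl:res}), the image of $[\A]^{-md}$ is $(-1)^{md}$, so the $m$-th coefficient equals $(-1)^{md}\chi([\mdl X_m^1])$ with $\chi$ the semialgebraic Euler characteristic of the real variety $\mdl X_m^1$.

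The next step is to identify $\chi([\mdl X_m^1])$ with $-\chi(\mdl X_m^+)$, where $\mdl X_m^+$ is the space of truncated arcs with positive leading coefficient. Here $\mdl X_m^1$ is the real variety of truncated arcs $\varphi \in X(\R[t]/t^{m+1})$ with $f(\varphi) = t^m \bmod t^{m+1}$ and $\varphi(0) = z$ (that is, leading coefficient exactly $1$), whereas $\mdl X_m^+$ allows the leading coefficient $a$ to be any positive real. I would exhibit $\mdl X_m^+$ as a semialgebraic fibration over $\R^+$ whose fiber over $a$ is obtained from $\mdl X_m^1$ by the coordinate rescaling $t \mapsto a^{1/m} t$ (this rescaling is available over $\R$ precisely because $a > 0$; it is the real-field analogue of the immediate automorphism argument used in Example~\ref{exam:two:poin} and Lemma~\ref{recov:top:euler}). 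Hence $\mdl X_m^+ \cong \R^+ \times \mdl X_m^1$ semialgebraically, and by multiplicativity of the semialgebraic Euler characteristic together with $\chi(\R^+) = -1$, we get $\chi(\mdl X_m^+) = -\chi([\mdl X_m^1])$, i.e. $\chi([\mdl X_m^1]) = -\chi(\mdl X_m^+)$.

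Combining the two computations, the $m$-th coefficient of $Z^{top}(T)$ is $(-1)^{md}\chi([\mdl X_m^1]) = -(-1)^{md}\chi(\mdl X_m^+)$, which gives exactly
\[
Z^{top}(T) = -\sum_{m \geq 1}(-1)^{md}\chi(\mdl X_m^+) T^m \in \Z\dbra T,
\]
as claimed. The main obstacle I anticipate is bookkeeping around the two sources of signs and the normalization: one must be careful that the composite $\gsv[[\A]^{-1}] \fun \ggk \RVar[[\A]^{-1}] \fun \Z$ really does send $[\A]$ to $-1$ (as opposed to the virtual Poincaré polynomial convention where $[\A] \mapsto u$), that the rescaling map $t \mapsto a^{1/m}t$ is genuinely semialgebraic over the ground field and respects the condition $\varphi(0)=z$ and the defining equation of $\mdl X_m^1$ (so that the fibration is semialgebraically trivial), and that passing through $\Xi$ and the forgetful map does not introduce extra factors beyond the power of $[\A]$ already accounted for in (\ref{z:coeff:change}). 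The comparison with \cite{koiParu:moti} is then a matter of matching conventions for the positive topological zeta function, which is why the statement says ``up to sign''.
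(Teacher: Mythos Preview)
Your proposal is correct and follows essentially the same route as the paper: both reduce the computation to (\ref{z:coeff:change}) together with the identity $\chi(\mdl X_m^+)=\chi(\R^+)\chi(\mdl X_m^1)=-\chi(\mdl X_m^1)$ coming from the trivial fibration $\mdl X_m^+ \to \R^+$. The only difference is that you give the explicit rescaling $t\mapsto a^{1/m}t$ to trivialize this fibration, whereas the paper simply cites \cite[Remark~1.1]{koiParu:moti} for the same fact.
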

\begin{proof}
Since the map $\mdl X_{m}^+  \fun \R^+$ given by $\varphi \efun a$ is a trivial fibration by \cite[Remark~1.1]{koiParu:moti}, we have $\chi(\mdl X_{m}^+)=\chi(\R^+)\chi(\mdl X_{m}^{1})$. So the equality follows from (\ref{z:coeff:change}).
\end{proof}

\subsubsection{The complex case}\label{top:complex}
We may consider the complex geometry in  $\puC$ over $\C \dpar t$ in the $\TCVF$-model $\puR$, since there is an interpretation, in the model-theoretic sense, of the $\lan{}{RV}{}$-structure $\puC$  in the $\lan{T}{RV}{}$-structure $\puR$; this is just a fancy way to say that, after fixing a square root $\sqrt{-1}$ of $-1$, $\puC$ may be identified with $\tilde \R^2$, $\C \dpar t$ with $\R \dpar t^2$, $\RV(\puC)$ with $\RV(\puR)^2$, and so on.

\begin{exam}
We think of $c \in \puC$ as  $a+\sqrt{-1}b$ but write it simply as a pair $(a, b) \in \tilde \R^2$; also denote $a$ by  $\Re c$ and $b$ by $\Im c$. Let $g, h\in \C[x_1,\ldots,x_n]$. Then the definable set
\[
\set{c \in \tilde\C^n \given \vv f(c)\leq \vv g(c) } \eqqcolon \{\vv f\leq \vv g\}
\]
can also be described as the union of the following two subsets of $\tilde \R^{2n}$:
\begin{gather*}
 \{\vv \Re f \leq \vv \Re g \} \cap \{\vv \Re f \leq \vv \Im g \}, \quad
  \{\vv \Im f \leq \vv \Re g \} \cap \{\vv \Im f \leq \vv \Im g \}.
\end{gather*}
\end{exam}

Thus, definable sets in $\puC$ may be regarded as definable sets in $\puR$. Let $\VF_*$, $\RV[*]$, etc., be the categories in \cite[\S~3]{HL:modified} with $\bb S = \C \dpar t$. Then
there is an induced faithful functor $\VF_* \fun \TVF_{*}$, which in turn yields a homomorphism $\gD_{\VF} : \ggk \VF_* \fun \ggk \TVF_{*}$.

For the pair of $\RV$-categories $\RV[*]$ and $\TRV[*]$, although a similar functor is available, we need to be more careful since these categories are graded. To illustrate the concern, consider the object $\RV_\infty(\puC) = \RV_\infty(\puR)^2 $. Since the real line $(\puR, 0) \sub \puC$ and the imaginary line $(0, \puR) \sub \puC$ have only one nonzero coordinate, this object has nonempty components in all of the three categories $\TRV[0]$, $\TRV[1]$, and $\TRV[2]$. This interpretation leads to an issue since, for instance, the complex points $(1, 0)$ and $(1, 1)$ should certainly be isomorphic objects, but they cannot be since they do not even belong to the same graded piece.

To resolve this issue, we can work with a dimension-free version of  $\ggk\RV[*]$, namely the zeroth graded piece $(\ggk\RV[*][[1]^{-1}])_0$ of $\ggk\RV[*][[1]^{-1}]$. This ring is indeed isomorphic to $\ggk \RV_*$, where $\RV_*$ is the category of definable sets and bijections in $\RV$, and can also be obtained from $\RV[*]$ by forgetting  $f$ in  $(U, f) \in \RV[*]$.
There is the forgetful  epimorphism $\ggk\RV[*] \fun \ggk \RV_*$. The pushforward ideal of $(\bm P - 1)$ along this epimorphism is still denoted as such. It follows from the construction of $\bb E_b$ that there is a homomorphism
\[
\bb E_{b*} : \ggk \RV_* / (\bm P - 1) \fun \sggk \RES
\]
whose composition with the epimorphism
\[
\ggk \RV[*] / (\bm P - 1) \fun \ggk \RV_* / (\bm P - 1)
\]
is $\bb E_b$. All this also applies to  $\ggk \TRV[*]$, $\ggk \TRV_*$, and $\ggk \TRES$, and  the corresponding homomorphism is denoted by $\bb E^T_{b*}$. Since, in $\ggk \TRV_*$, we have,
\[
[(1,1)] - [\RV^{\circ\circ}_\infty(\puC)] = 1 - [\RV^{\circ\circ}_\infty(\puR)]^2 = 1 - (2 - \bm P)^2 = (\bm P - 1)(3 - \bm P),
\]
the ideal $(\bm P - 1)$ of $\ggk \RV_*$ is included in the eponymous ideal of  $\ggk \TRV_*$.

\begin{figure}[htb]
\begin{equation}\label{complex:RV:cat}
\bfig
 \Atrianglepair(0,400)/<-`->`->`->`->/<1200,400>[{\ggk \RV[*] / (\bm P - 1)}`{\ggk \VF_*}`{\ggk \RV_* / (\bm P - 1)}`{\sggk \RES}; \int``\bb E_b``]
 \square(0,0)|blla|/->`->`->`->/<1200, 400>[{\ggk \VF_*}`{\ggk \RV_* / (\bm P - 1)}`{\ggk \TVF_*}`{\ggk \TRV_* / (\bm P - 1)}; \int^*`\gD_{\VF}``\int^T_{*}]
 \square(1200,0)|blla|/->`->`->`->/<1200,400>[{\ggk \RV_* / (\bm P - 1)}`{\sggk \RES}`{\ggk \TRV_* / (\bm P - 1)}`{\ggk \TRES}; \bb E^*_b`\gD_{\RV}`\gD_{\RES}`{\bb E^T_{b*}}]
 \Vtrianglepair(0,-400)/->`->`->`<-`<-/<1200,400>[{\ggk \TVF_*}`{\ggk \TRV_* / (\bm P - 1)}`{\ggk \TRES}`{\ggk \TRV[*] / (\bm P - 1)}; ``\int^T``{\bb E^T_{b}}]
\efig
\end{equation}
\end{figure}

In conclusion, we have a commutative diagram (\ref{complex:RV:cat}). Similarly, (\ref{complex:RV:cat}) still commutes if $\bb E_b$, $\bb E^*_b$, etc., are replaced by $\bb E_g$, $\bb E^*_g$, etc.

\begin{rem}
Since $\gD_{\RES}([\K(\puC)]) = [\K(\puR)]^2 = 1$ in $\ggk \TRES$, we may, in (\ref{complex:RV:cat}), replace $\sggk \RES$ with $\sggk \RES/([\A] - 1)$. This has the effect of equalizing $\bb E_b$, $\bb E_g$ (see \cite[Remark~4.4]{HL:modified}) and thence the upper portions of the two versions of (\ref{complex:RV:cat}). So there is only one homomorphism from $\ggk \VF_*$ to $\ggk \TRES \cong \Z$, in other words, $\chi^T_b \circ \gD_{\VF} = \chi^T_g \circ \gD_{\VF}$; denote it by $\chi_{\C}$. This is perhaps another manifestation of the phenomenon  alluded to in Remark~\ref{eu:add}.
\end{rem}

Let $X$, $f$ be defined over $\C$ and $z$ a $\C$-rational point. In light of the complex version of (\ref{upsi:to:eb}) (see \cite[Remark~8.14]{HL:modified}), the  motivic Milnor fiber $\mathscr S$ of $f$ may be taken as $(\Theta \circ \bb E_{b} \circ \int)([\mdl X]) \in \gmv$, where $\mdl X$ is the nonarchimedean Milnor fiber of $f$ as defined in (\ref{com:nonarch}).

Now, we proceed to recover the result  in \cite[Remark~8.5.5]{hru:loe:lef}. The sets $\bar F_{a,r}$, $\bar {\mdl F}_r$, $\partial \bar{\mdl F}_r$, and $\mdl F$ are defined as in Notation~\ref{nota:top:mil}, but with $X(\R)$ replaced by $X(\C)$ and $X(\VF)$ by $X(\puC)$. Then the computations in the real case still go through almost verbatim. In $\ggk \TVF_*$, we still have, for all sufficiently small $r\in \UU^+$,
\[
[\mdl F] = [\bar{\mdl F}_r]-[(0,r] \mi \MM^+(\puR)][\partial \bar{\mdl F}_r].
\]
Thus $\chi^T_b([\mdl F]) = \chi([\bar{\mdl F}_r])$. On the other hand, since $\chi(\partial \bar{\mdl F}_r)=\chi(\partial F_{a,r})=0$ (the smooth compact complex manifold $\partial F_{a,r}$ is of odd dimension), $\chi^T_g([\mdl F]) = \chi([\bar{\mdl F}_r])$ as well.
Now, observe that, in the present setting,
\[
\int^T [\mdl X] = [1]^2\int^T [\mdl F] \in \ggk \TRV[*] / (\bm P - 1),
\]
and hence it makes no difference which one of the generalized Euler characteristics $ {\chi^T_g}$, ${\chi^T_b}$ is used to relate $\mdl X$ and $\mdl F$.

\begin{thm}
The Euler characteristic of the topological Milnor fiber of $f$ is equal to $\chi^T_b([\mdl F])$ and $\chi^T_g([\mdl F])$,  which in turn are  equal to $\chi_{\C}([\mdl X])$.
\end{thm}

To see that this is indeed the result  in \cite[Remark~8.5.5]{hru:loe:lef}, we need to show  $\chi_{\C}([\mdl X]) = (\gD_{\RES} \circ \Theta^{-1})(\mathscr S) =  \chi_c(\mathscr S)$, where $\chi_c$ is the homomorphism
\[
 \gmv \fun \ggk \Var_{\C} \fun \ggk \Var_{\C} / ([\A] - 1) \fun \Z
\]
with the last arrow given by the compactly supported Euler characteristic, see \cite[(5.5.2)]{hru:loe:lef}. Since both $\gD_{\RES} \circ \Theta^{-1}$ and $\chi_c$ are induced by the topological Euler characteristic of varieties over $\C$, they must be equal.

\section{Thom-Sebastiani formula}\label{section:TS}

Let $X$ be a smooth connected variety and $f$, $g$ nonconstant morphisms from $X$ to the affine line, all defined over $\C$. In this section we aim to establish a local motivic Thom-Sebastiani formula for composite morphisms on $X$ of the form $h(f, g)$, where $h(x, y)$ is a polynomial of the form
\[
y^{N} + \sum_{2 \leq \imath \leq \ell} x^{m_{\imath}}, \quad m_2 \ll N \ll m_3 \ll \ldots \ll m_\ell;
\]
here we may rename $N$ as $m_1$, but it plays a special role and hence is denoted differently. The actual condition we shall assume is somewhat weaker than this, see Hypothesis~\ref{hyp:seg:inc}.

\subsection{Combinatorial data and Galois actions of the torus}\label{com:dat}

The said formula expresses the motivic Milnor fiber of $h(f, g)$ as a sum of (iterated) motivic Milnor fibers of morphisms derived from $f$, $g$ and their convolution products. Before diving into technicalities, we first describe how the various terms in the sum are singled out based on certain combinatorial data that is read off from the tropical curve of $h(x, y)$.

Consider the planes in $(\Q^+)^3$ defined by the following equations: $z = 1$, $z = Ny$, and $z = m_{\imath} x$ for $2 \leq \imath \leq \ell$. The points with the lowest $z$-coordinates on these planes form the surface of a convex polyhedron whose edges are the pairwise intersections of the three planes $z = 1$, $z = Ny$, and $z = m_{2} x$. The tropical curve $H$ of  $h(x, y)$ is the orthogonal projection of these edges in the $(x, y)$-plane. Thus, $H$ consists of two rays $H_1$, $H_2$ and a line segment $H_3$, all emanating from the point $(1/m_2, 1/N)$, see the illustration on the left in Figure~\ref{TS:fig}. Both $H_1$ and $H_2$ contribute a term  of (iterated) motivic Milnor fiber to the formula.

\begin{figure}[htb]
\begin{tikzpicture}[scale = 1]
\draw [<->, help lines] (0, 3) -- (0, 0) -- (4, 0); \node [below left] at (0, 0) {$(0, 0)$}; \node [below left] at (4, 0) {$x$}; \node [below left] at (0, 3) {$y$};
\draw [thick, red] (0,0) -- (2, 1);
\draw [thick] (2, 3) -- (2,1) -- (4,1);
\draw [dotted] (0, 1) -- (2, 1); \node [left] at (0, 1) {$\frac 1 N$};
\draw [dotted] (2, 0) -- (2, 1); \node [below] at (2, 0) {$\frac 1 {m_2}$};
\draw [thick, red] (6,0) -- (14, 0); \node [below] at (6, 0) {$(0,0)$}; \node [below] at (14, 0) {$(\frac 1 {m_2}, \frac 1 N)$};
\node [right] at (2, 2.2) {$H_2$}; \node [above] at (3.2, 1) {$H_1$}; \node [above] at (1.4, .1) {$H_3$};
\draw [thick] (6, 0) -- (6,3) -- (14,3) -- (14, 0);
\draw [thick] (6, 0) -- (14,3) (6, 0) -- (12,3) (6, 0) -- (10.5,3) (6, 0) -- (7.5,3);
\draw [dotted] (7.5, 0) -- (7.5, 3); \node (1) [below] at (7.5, 0) {$(\alpha_\ell, \beta_\ell)$};
\draw [dotted] (10.5, 0) -- (10.5, 3); \node (2) [below] at (10.5, 0) {$(\alpha_4, \beta_4)$};
 \path (1) -- node[auto=false]{\ldots} (2);
\draw [dotted] (12, 0) -- (12, 3); \node [below] at (12, 0) {$(\alpha_3, \beta_3)$};
\draw [fill]  (7.5, .56) circle [radius=.05] (10.5, 1.69) circle [radius=.05] (12, 2.25) circle [radius=.05] (7.5, .75) circle [radius=.05] (10.5, 2.25) circle [radius=.05] (7.5, 1) circle [radius=.05] (7.5, 3) circle [radius=.05] (10.5, 3) circle [radius=.05] (12, 3) circle [radius=.05] (14, 3) circle [radius=.05];
\node [above] at (6.7, -1.2) {$L_{\ell}$}; \node (3) [above] at (8.2, -1.2) {$L_{\ell-1}$}; \node (4) [above] at (10, -1.2) {$L_{4}$}; \node [above] at (11.3, -1.2) {$L_3$}; \node [above] at (13, -1.2) {$L_2$};
\path (3) -- node[auto=false]{\ldots} (4);
\end{tikzpicture}
  \caption{The tropical curve $H$ of $h(x,y)$ and the vertical rectangular pane $P$ of height $1$ on the line segment $H_3$.}\label{TS:fig}
\end{figure}
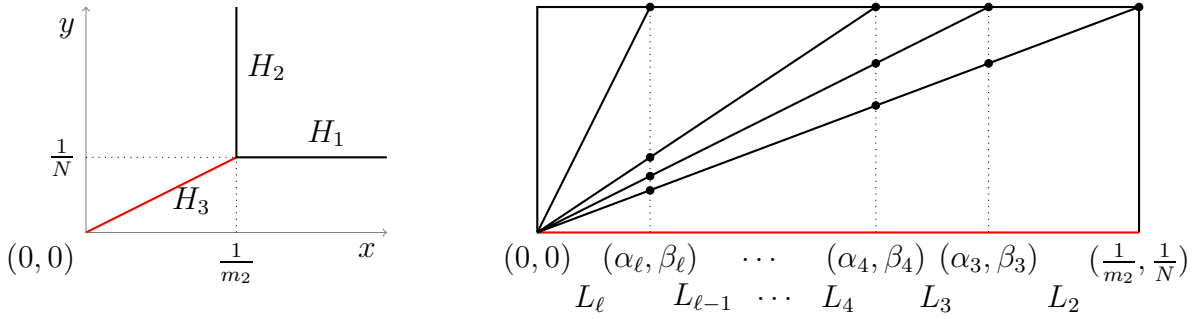

For the other terms, we need to examine the vertical rectangular pane $P \sub (\Q^+)^3$ of height $1$ standing on the line segment $H_3$, see the illustration on the right in Figure~\ref{TS:fig}. For each $2 \leq \imath \leq \ell$, let $\alpha_\imath = 1/m_\imath$ and $\beta_\imath = m_2 /N m_\imath$. Each plane $z = m_{\imath} x$ intersects $P$ at the oblique line segment connecting $(0,0,0)$ and $(\alpha_{\imath}, \beta_{\imath}, 1)$ in $(\Q^+)^3$. Let $L_\imath \sub (\Q^+)^2$ be the open line segment between the two points $(\alpha_\imath, \beta_\imath)$ and $(\alpha_{\imath+1}, \beta_{\imath+1})$, where we set $\alpha_{\ell+1} = \beta_{\ell+1} = 0$. Then:
\begin{itemize}[leftmargin=*]
  \item Each point $(\alpha_{\imath}, \beta_{\imath})$ with $\imath > 2$  contributes a term of (iterated) motivic Milnor fiber.
  \item For each $\imath \geq 2$, the sequence of points above $(\alpha_{\imath}, \beta_{\imath})$ that lie on the oblique line segments contribute another term,
  \item so do the corresponding  open line segments that lie over $L_\imath$.
\end{itemize}
These last two terms are jointly referred to as a term of convolution product.

\begin{rem}
In this section, we choose to work with varieties over $\C$ with $\G_m$-actions instead of $\hat \mu$-actions. This corresponds to  working in the $\ACVF$-model $\puC$ with $\bb S = \C \cup \Q$. Now, although  $\Gamma \cong \Q$ is only a definable sort of $\puC$ (see Remark~\ref{imag:Gam}), the Hrushovski-Kazhdan integration theory still goes through. This is not explicitly stated in \cite{hrushovski:kazhdan:integration:vf} but is included in the more general assumption of ``effectiveness'' there; in \cite{Yin:special:trans} and its sequels, $\vv(\VF(\bb S))$ is assumed to be nontrivial, but this is merely for convenience and is by no means an essential requirement. Note that $\Q$ will become redundant if any additional parameters from,  say, $\MM(\puC)$ or $\RV^{\circ\circ}(\puC)$ are used, since then every element in $\Q$ is definable.
\end{rem}

Let $z \in f^{-1}(0)$ be a $\C$-rational point. As before, since the discussion below will be of a local nature, we may assume that $X$ is  affine (hence a definable subset of $\VF^n$ for some $n$) and, without loss of generality, $z = 0$. Write $X \cap \MM^n$ as $X(\MM)$. We shall consider definable sets of the form
\[
\mdl X^\sharp_\gamma = \set{ x \in X(\MM) \given \vv(f(x)) = \gamma }, \quad \gamma \in \Gamma^+;
\]
for simplicity, $\mdl X^\sharp_1$ shall just be written as $\mdl X^\sharp$, which is of primary interest, and the restriction $f \rest \mdl X^\sharp_\gamma$  as $\mdl X^\sharp_\gamma$ (this will become a general notational scheme below). For each $u \in \RV$ and each $a \in u^\sharp \sub \VF$, let
\begin{gather*}
 \mdl X_a = \set{ x \in X(\MM) \given f(x) = a  } \dand    \mdl X_u = \set{ x \in X(\MM) \given \rv(f(x)) = u },
\end{gather*}
which are $a$-definable sets; so $\mdl X_{\rv(t)}$ is just the set  called the nonarchimedean Milnor fiber of $f$ above. The following equality relating $\int [\mdl X_u] $ and $\int [\mdl X_a]$ generalizes (\ref{fib:thick}), and shall be used frequently (and often implicitly); the argument for it is the same as the one given thereabout.
\begin{lem}\label{rv:int:bun}
$\int [\mdl X_u] =   [1]\int [\mdl X_a]$.
\end{lem}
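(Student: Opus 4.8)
The plan is to mimic the argument sketched around equation (\ref{fib:thick}) and make the "immediate automorphism" trick uniform in $a$. First I would fix $u \in \RV$ and note that $\mdl X_u = \bigsqcup_{a \in u^\sharp} \mdl X_a$ fibered over $u^\sharp$; the projection $\mdl X_u \fun u^\sharp$ is a definable map whose fiber over $a$ is $\mdl X_a$. The content of the lemma is that, modulo the integration map $\int$, this behaves like a trivial bundle with fiber $\mdl X_a$ and base $u^\sharp$, and since $[u^\sharp] = [1]$ in $\ggk \RV[1]$ (indeed $u^\sharp$ is a coset of $1 + \MM$), we would get $\int[\mdl X_u] = [1]\int[\mdl X_a]$ once triviality is established.

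The key step is triviality: for any two points $a, a' \in u^\sharp$, there is a definable (indeed $\{a,a'\}$-definable, or after naming a branch, quantifier-free definable) bijection $\mdl X_a \fun \mdl X_{a'}$, compatible as $a$ varies, coming from an immediate automorphism $\sigma$ of $\puC$ (an automorphism fixing $\RV$ pointwise) with $\sigma(a) = a'$. Concretely, writing $a' = a(1+\epsilon)$ with $\epsilon \in \MM$, multiplication-type substitutions (or, more carefully, the fact that $X(\MM)$ and the conditions $f(x) = a$ are cut out by equations with coefficients in $\C \cup \Q$, on which $\sigma$ acts trivially) give $\sigma(\mdl X_a) = \mdl X_{a'}$. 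Assembling these over the whole fiber $u^\sharp$, and using that $\int$ is defined on definable sets with $\RV$-parameters, one builds a definable bijection $\mdl X_u \fun \mdl X_a \times u^\sharp$ over $u^\sharp$ — at the level of classes this is all that is needed. Then $\int$ being a ring homomorphism (Theorem~\ref{main:prop}) and multiplicative over products gives $\int[\mdl X_u] = \int[\mdl X_a]\cdot\int[u^\sharp]$, and $\int[u^\sharp] = \int[\bb L([1])] = [1]$ — here one uses that $u^\sharp = \rv^{-1}(u)$ is a translate of $1 + \MM = \bb L([1])$ and that translation is a definable bijection. This yields the claim.

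The main obstacle is making the bijection $\mdl X_u \cong \mdl X_a \times u^\sharp$ genuinely definable and uniform, rather than just pointwise. The per-fiber immediate automorphisms $\sigma_{a'}$ do not obviously patch into a single definable function on $\mdl X_u$, because choosing $\sigma_{a'}$ amounts to choosing a square/$m$-th root of $a'/a$ or similar data, which may not be available definably over $\C \cup \Q$. The remedy, exactly as in the cited argument, is a \emph{compactness} argument: over any elementary extension the family $\{\mdl X_a : a \in u^\sharp\}$ has constant class $\int[\mdl X_a]$ (by the pointwise automorphism argument applied in a saturated model), and a definable family of definable sets whose classes are all equal is, up to refining a definable partition of the base, piecewise in definable bijection with a constant family; this is the standard "definable compactness / fiberwise equality implies piecewise-trivial" principle already invoked around (\ref{fib:thick}). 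So the proof is essentially a reference back to that argument, together with the observation that $[u^\sharp] = [1]$; the writeup should spell out the compactness step and the identification $\int[u^\sharp] = [1]$, but no new ideas beyond (\ref{fib:thick}) are required.

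\begin{proof}
The argument is identical to the one given for (\ref{fib:thick}), which we recall. Fix $a \in u^\sharp$. For any $a' \in u^\sharp$ we have $\rv(a') = \rv(a) = u$, so there is an immediate automorphism $\sigma$ of $\puC$ (one fixing $\RV$ pointwise) with $\sigma(a) = a'$. Since $X$, $f$, and the condition defining $\mdl X^\sharp$ are defined over $\C \cup \Q$, on which $\sigma$ acts trivially, $\sigma$ restricts to a bijection $\mdl X_a \fun \mdl X_{a'}$; in particular $[\mdl X_{a}] = [\mdl X_{a'}]$ and hence $\int[\mdl X_{a}] = \int[\mdl X_{a'}]$ for all $a' \in u^\sharp$. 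Working in a sufficiently saturated model and applying compactness to the definable family $(\mdl X_{a'})_{a' \in u^\sharp}$, whose members all have the same class, we obtain a finite definable partition of $u^\sharp$ on each piece of which the family is, after a definable bijection, a constant family with fiber $\mdl X_a$; gluing, there is a definable bijection $\mdl X_u \fun \mdl X_a \times u^\sharp$ over $u^\sharp$. Applying $\int$, which is a ring homomorphism by Theorem~\ref{main:prop}, we get $\int[\mdl X_u] = \int[\mdl X_a]\cdot\int[u^\sharp]$. Finally $u^\sharp = \rv^{-1}(u)$ is a translate of $\rv^{-1}(1) = 1 + \MM = \bb L([1])$, and translation is a definable bijection, so $\int[u^\sharp] = \int[\bb L([1])] = [1]$. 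Therefore $\int[\mdl X_u] = [1]\int[\mdl X_a]$.
\end{proof}
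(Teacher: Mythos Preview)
Your proposal is correct and follows essentially the same route as the paper's own argument, which simply refers back to the reasoning around (\ref{fib:thick}): immediate automorphisms fixing $\RV$ (and the base parameters) show that the class $\int[\mdl X_{a'}]$ is independent of $a' \in u^\sharp$, and compactness then yields the product formula $\int[\mdl X_u] = [u^\sharp]\int[\mdl X_a] = [1]\int[\mdl X_a]$. Your write-up just makes these two steps explicit.

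One small remark: when you say the immediate automorphism $\sigma$ ``acts trivially on $\C \cup \Q$'', this needs a word of care. Fixing $\RV$ pointwise automatically fixes $\K$ and $\Gamma$, but not a priori the specific section $\C \subset \VF$. What one actually uses is that in a saturated model any two $a, a' \in u^\sharp$ have the same type over $\C \cup \RV$ (the open ball $u^\sharp$ contains no points algebraic over $\C$), so an automorphism over $\C$ fixing $\RV$ and sending $a \mapsto a'$ exists. The paper's phrasing (``an immediate automorphism of $\puC$ over $\R$'') has the same informal flavor, so this is not a divergence from the paper, just a point worth stating precisely.
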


For each $a \in \C^\times$, there is an automorphism $\C \dpar t \fun \C \dpar t$ sending $t$ to $at$. Thus, there is a subgroup of $\gal(\C \dpar t/ \C)$ that may be identified with $\C^\times$; the preimage of $\C^\times$ along the canonical surjective homomorphism
\[
\gal(\puC / \C) \fun \gal(\C \dpar t/ \C)
\]
is denoted by $\hat \tau$. We have $\hat \tau \cong \lim_n \C^\times_n$, where each $\C^\times_n$ is just a copy of $\C^\times$ and the transition morphisms are the same as in the limit $\hat \mu = \lim_n \mu_n$;  so for each $n$ there is a canonical epimorphism
$\tau_n : \hat \tau \fun \C^\times_n$,
which is a part of the limit construction (in the category of groups, say). More concretely, the elements in $\hat \tau$  may be identified as sequences $\hat a=(a_n)_n$ of $n$th roots of $a$, $a\in \C^\times$, satisfying $a_{kn}^n=a_k$. Such an element acts on $\puC$ by $\hat a \cdot t^{1/n}= a_n t^{1/n}$. We have a short exact sequence
$$
1 \fun \hat \mu \fun \hat \tau \fun \C^\times \fun 1.
$$
This sequence does not split, though.

\begin{rem}\label{tauhat:1:iden}
Here is a different perspective on $\hat \tau$. By the structural theory of valued fields, an element $\sigma \in \gal(\puC / \C \dpar t)$ is in the ramification subgroup if and only if it fixes $\RV$ pointwise (see \cite[Lemma~5.3.2]{engler:prestel:2005}). But it can be easily checked that every $\sigma \in \gal(\puC / \C \dpar t)$ moves some element of $\RV$ unless $\sigma = \id$. So $\gal(\puC / \C \dpar t) \cong \hat \mu$ may be identified with $\aut(\RV / \RV(\C \dpar t))$, where $\RV(\C \dpar t)$ is equal to the subgroup of $\RV$ generated by $\rv(t)$ over $\K^\times$.

For each $u \in \K^\times$, there is an automorphism $\aut(\RV / \K^\times)$ sending $\rv(t)$ to $u\rv(t)$; observe that an automorphism in $\aut(\RV / \K^\times)$  fixes $\Gamma \cong \RV / \K^\times$ pointwise if and only if it is of this form. So $\hat \tau$ may also be identified with a subgroup of $\aut(\RV / \K^\times)$, namely $\aut(\RV / \kuq)$.
\end{rem}

\begin{rem}\label{cross:Gm}
From yet a different perspective, recall from Remark~\ref{shat:section} that there exists a natural bijection between $\hat \mu$ and the set of reduced cross-sections $\rcsn : \Q \fun \RV$ with $\rcsn(1) = \rv(t)$ in $\puC$. Of course this is still the case if we change $\rv(t)$ to any other element of the form $u\rv(t)$, $u \in \K^\times$. Consequently, we may identify $\hat \tau$ with the set of all such reduced cross-sections.

Since every reduced cross-section $\rcsn$ determines a reduced angular component $\ac : \RV \fun \K^\times$ via the assignment $u \efun \tbk(u)$ and, conversely, every reduced angular component $\ac$ determines a reduced cross-section $\rcsn$ with $\rcsn(\Q) = \ac^{-1}(1)$, we see that $\hat \tau$ may also be identified with the set of all such reduced angular components.

Intuitively, as we have seen above, all this is just saying that if any reduced angular component or reduced cross-section is chosen and added to the structure of $\RV$ then we have an intrinsic isomorphism $\RV \cong {\K^\times} \oplus \Q$, and hence if both $\K^\times$ and $\Q$ are fixed pointwise then $\RV$ has no symmetries left other than the trivial one.
\end{rem}

\begin{rem}\label{whatis:good}
Similar to the case $\bb S = \C \dpar t$,  elements in $\sggk \RES$ now carry \memph{good} $\hat \tau$-actions, that is, those $\hat \tau$-actions that factor through some $\tau_n$ and hence may be considered as $\G_m$-actions. To emphasize this and to distinguish it from the similar ring with good $\hat \mu$-actions, we shall denote $\sggk \RES$ by $\sggk^{\hat \tau} \RES$   over $\bb S = \C \cup \Q$ and by $\sggk^{\hat \mu} \RES$ over $\bb S = \C \dpar t$.

An action of an algebraic group $G$ on a variety $Y$, all defined over $\C$, is \memph{good} if every orbit is contained in an affine open subset of $Y$. If $G$ is finite and $Y$ is quasi-projective then this condition always holds, which is why we have not brought it up until now.

From here on, only  good $\G_m$-actions --- in both senses when applicable --- will be considered, and hence we will drop the qualifier ``good'' from the terminology altogether.
\end{rem}

\begin{defn}\label{defn:weighted}
Let $Y$ be a variety over $\C$ with a $\G_m$-action $h$. We say that $h$ is \memph{$n$-weighted}, for some $n \in \Z^+$, if there is a morphism $\pi : Y \fun \G_m$ such that $\pi(c \cdot y) = c^{n} \pi(y)$ for all $c \in \G_m$ and all $y \in Y$. We also say that $h$ is \memph{$0$-weighted} if it is trivial, and the only witness to this is the morphism $Y \fun 1$. Observe that if there is a $\G_m$-equivariant isomorphism between $(Y, h)$ and $(Y', h')$ then $h$ is $n$-weighted if and only if $h'$ is $n$-weighted. Moreover, if $h$ is $n$-weighted with a witness $\pi$ and $h'$ is $n'$-weighted with a witness $\pi'$ then
\[
\pi(c \cdot y) \pi'(c \cdot y') = c^{n+n'} \pi(y)\pi'(y'), \quad \text{for all $c \in \G_m$, all $y \in Y$, and all $y' \in Y'$},
\]
and hence the diagonal $\G_m$-action $h \times h'$ on $Y \times Y'$ is $(n+n')$-weighted.

The category $\Var_{\C}^{\tau_n}$ consists of the varieties over $\C$ with $n$-weighted $\G_m$-actions and the $\G_m$-equivariant morphisms between them. Denote by $\Var_{\C}^{\hat \tau}$ the colimit of the inductive system of  $\Var_{\C}^{\tau_n}$, $n \in \N$, where the transition functors $\Var_{\C}^{\tau_n} \fun \Var_{\C}^{\tau_{kn}}$, corresponding to multiplication of integers, are given by $(Y, h) \efun (Y, h^k)$; so there are no functors between $\Var_{\C}^{\tau_0}$ and other $\Var_{\C}^{\tau_n}$.
\end{defn}

We may and do think of an object of $\Var_{\C}^{\hat \tau}$ as equipped with a $\hat \tau$-action that factors through some $\tau_n$, hence the notation.

The Grothendieck groups $\ggk^{\tau_n} \Var_{\C}$, $\ggk^{\hat \tau} \Var_{\C}$ are constructed subject to the usual condition on trivializing $\G_m$-actions on affine line bundles, analogous to (\ref{compl:flat}). So
$\ggk^{\hat \tau} \Var_{\C}$ is the colimit of $\ggk^{\tau_n} \Var_{\C}$, $n \in \N$, and is indeed a commutative ring, with the product induced by that in $\Var_{\C}^{\hat \tau}$.


\begin{rem}\label{const:theta}
Choose a reduced cross-section $\rcsn : \Q \fun \RV$; the point $\rv(t) \in \RV$ is not special  in the present setting (the object of interest shall be $\mdl X^\sharp$, not $\mdl X_{\rv(t)}$) and hence we no longer demand $\rcsn(1) = \rv(t)$. Let $U \sub \gamma^\sharp \sub \RV^n$ be an object of $\RES$, where $\gamma_1 \leq \ldots \leq \gamma_n$. Let $d$ be the least positive integer such that $U$ is a set in $\RV(\C \dpar{t^{1/d}})$; note that any other such integer is a multiple of $d$. So the $\hat \tau$-action on $U$ factors through $\tau_d$. Write $\gamma_n = e / d$ for some $e \in \Z^+$ and consider the function $\pi : U \fun \RV$ given by $u \efun u_n^d$. Then $\pi(c \cdot u) = c^{e d} \pi(u)$ for all $c \in \G_m$ and all $u \in U$. So, if $\tbk(U)$ is a variety over $\C$ then it is $ed$-weighted, which is witnessed by $\tbk(\pi)$.
\end{rem}

Recall that the isomorphism in \cite[\S~4.3]{hru:loe:lef} is constructed via twistback; henceforth we denote it by
$\Theta^{\hat \mu} : \sggk^{\hat \mu} \RES \fun \gmv$. Similarly, there is an isomorphism
$\Theta^{\hat \tau} : \sggk^{\hat \tau} \RES \fun \ggk^{\hat \tau} \Var_{\C}$,  and the diagram
\begin{equation*}
\bfig
\Square(0,0)/->`->`->`->/<400>[{\sggk^{\hat \tau} \RES}`{\ggk^{\hat \tau} \Var_{\C}}`{\sggk^{\hat \mu} \RES}`{\gmv}; \Theta^{\hat \tau}```\Theta^{\hat \mu}]
\efig
\end{equation*}
indeed commutes, where the first vertical arrow is induced by the subcategory relation and the second vertical arrow is induced by the obvious forgetful functor ($\hat \mu$ is a subgroup of $\hat \tau$).

\begin{rem}\label{theta:suj}
The modified argument for the surjectivity of $\Theta^{\hat \tau}$ is not as straightforward as that in Remark~\ref{first:theta}. A bit of  model theory is needed.

Let $(Y, h) \in \Var_{\C}^{\tau_n}$  and $\pi : Y \fun \G_m$  witness that $h$ is $n$-weighted. Without loss of generality, $Y$ is irreducible and quasi-projective. Since the first-order theory of algebraically closed fields admits elimination of imaginaries,  there are a definable set $Z$ in $\G_m$ and a definable surjection $\omega: Y   \fun Z$ each of whose fibers contains precisely one $h$-orbit. So $\omega \oplus \pi : Y \fun Z \times \G_m$ is a definable finite-to-one surjection each of whose fibers inherits a $\mu_n$-action from $h$. By the Kummer-theoretic construction in the proof of \cite[Proposition~4.3.1]{hru:loe:lef} and compactness, we may assume that there are a definable function $\eta : Z  \fun \G_m$ and a definable bijection
\[
\zeta_1 : (\omega \oplus \pi)^{-1}(Z \times 1) \fun \set{(z, 1 ,v) \in Z \times \G_m \times \G_m \given \eta(z) = v^n}.
\]
For $y \in Y$ with $\zeta_1(y) = (z, 1 , v)$ and $c \in \G_m$, set $\zeta(c \cdot y) = (z, c^n, cv)$. It can be checked that if $c \cdot y = c' \cdot y'$ then $\zeta(c \cdot y) = \zeta(c' \cdot y')$ and hence $\zeta$ is a $\G_m$-equivariant bijection from $(Y, h)$ onto a set $V \sub Z \times \G_m \times \G_m$, where the $\G_m$-action on $V$ is given by $c \cdot (z, w, v) = (z, c^nw, cv)$. Let $U \sub Z \times 1^\sharp \times(1/n)^\sharp$ such that $\tbk(U) = V$. Then $\Theta^{\hat \tau}([U]) = [(Y, h)]$.
\end{rem}

Following the notational scheme introduced in Remark~\ref{note:vol}, let us denote  $\Theta^{\hat \tau} \circ \bb E_b \circ \int$, $\Theta^{\hat \mu} \circ \bb E_b \circ \int$ by $\Vol^{\hat \tau}$, $\Vol^{\hat \mu}$. Relative to any chosen reduced cross-section $\rcsn$,  the fiber $\mdl X_{\rcsn(1)}$ gives the motivic Milnor fiber $\mathscr S_f$ as constructed in \cite[\S~8.5]{hru:loe:lef}, that is, $\Vol^{\hat \mu}([\mdl X_{\rcsn(1)}]) = \mathscr S_f$, but for any $v \in \K^\times$ other than $1$, $\mathscr S_f^v \coloneqq \Vol^{\hat \mu}([\mdl X_{v\rcsn(1)}])$ is not equal to $\mathscr S_f$ in general. The $\hat \mu$-action on $\mathscr S_f^v$ corresponds to a coset of $\hat \mu$ in $\hat \tau$, which in turn corresponds to the various reduced cross-sections $\rcsn' : \Q \fun \RV$ with $\rcsn'(1) = v \rcsn(1)$.

These constructions  still go through if we replace  $\bb S = \C \dpar t$ with $\bb S = \C \dpar{t^q}$ for any $q \in \Q^+$.


\subsection{Categories with angular components} \label{cat:ang}

Ultimately, we are only interested in the points $(\alpha_\imath, \beta_\imath) \in (\Q^+)^2$ as described above and the corresponding sequences $(m_2 / m_\imath, m_i / m_\imath)_{2 \leq i \leq \imath}$ of elements in $(0, 1] \sub \Q^+$.  But it is conceptually clearer to work in a more general setting. Thus, let $\vta = (\vta_1, \ldots, \vta_\ell)$ be a sequence of elements in $(0, 1] \sub \Q^+$ with $\vta_\ell = 1$ such that if $\ell > 1$ then $\vta_1 = \vta_2 < \ldots < \vta_\ell$; let $\lambda$ be the least positive integer such that every $\lambda \vta_i$ is an integer; so any other integer that has this property must be a multiple of $\lambda$. We take  $\vta_\ell = 1$ so to simplify the discussion, and can derive other cases by symmetry, see Remark~\ref{matchG}.

\begin{rem}
Similar to  $\Vol^{\hat \tau}$, we aim to construct  homomorphisms $\Vol^{\ac}_{\vta} = \Theta^{\ac}_{\vta} \circ \bb E^{\ac}_{b,\vta} \circ \int^{\ac}_\vta$, which is written as $\Vol^{\ac} = \Theta^{\ac} \circ \bb E^{\ac}_{b} \circ \int^{\ac}$ when $\ell = 1$ (hence $\vta = 1$), and show that  they commute with various convolution operators as illustrated in (\ref{convol:comm}).
\begin{figure}[htb]
\begin{equation}\label{convol:comm}
\bfig
 \square(0,0)|amlb|/<-`->`->`<-/<-975, 400>[{\ggk \RV^{\ac}_{\vta}[*]/ (\bm P - 1)}`{\ggk \VF^{\ac}_\vta}`{\ggk \RV^{\ac}[*]/ (\bm P - 1)}`{\ggk \VF^{\ac}}; \int^{\ac}_\vta`\dot \Pi_\vta`\dot \Sigma_\vta`\int^{\ac}]
 \hSquares(0,0)/->`->``->`->`->`->/<400>[{\ggk \RV^{\ac}_{\vta}[*]/ (\bm P - 1)}`{\sggk \RES^{\ac}_{\vta}}`{\ggk^{\vta} \Var_{\C} \cong \ggk^{\hat \mu} \Var_{\G_m}}`{\ggk \RV^{\ac}[*]/ (\bm P - 1)}`{\sggk \RES^{\ac}}`{\ggk^{\bm 1} \Var_{\C} \cong \gmv}; \bb E^{\ac}_{b,\vta}`\Theta^{\ac}_{\vta}``\dot \Pi_\vta`\dot \Psi_\vta`\bb E^{\ac}_{b}`\Theta^{\ac}]
\efig
\end{equation}
\end{figure}

This and the next subsections explain  the middle and right squares of the diagram. The construction of $\int^{\ac}_\vta$  is given in \S~\ref{const:intac}.
\end{rem}

First of all, to define convolution operators, we need to consider objects equipped with angular component maps and equivariant morphisms between them, as follows.

Let $Y$ be a variety over $\C$ with a $\G_m$-action and $\pi : Y \fun \G_m^\ell$ a morphism. For each $1 \leq i \leq \ell$, write $\pi_i = \pr_i \circ \pi$, $\pi_{>i} = \pr_{>i} \circ \pi$, etc. Suppose that, for some morphism $\pi^* : Y \fun \G_m$,
\[
\pi_i = (\pi^{*})^{\lambda \vta_i} \quad \text{for every $2 \leq i \leq \ell$}.
\]
For  $n \in \Z^+$ that is divisible by $\lambda$, we say that  $\pi$ is \memph{$(\vta, n)$-diagonal} if,
\begin{equation}\label{good:act}
\pi_1(c \cdot y) = c^{n \vta_1} \pi_1(y), \quad \pi^*(c \cdot y) = c^{n /\lambda} \pi^*(y), \quad \text{for all $c  \in  \G_m$ and all $y \in Y$}.
\end{equation}
This implies that $\pi(c \cdot y) = c^{n \vta} \pi(y)$, where $c^{n \vta} = (c^{n \vta_i})_{1 \leq i \leq \ell}$; we also refer to  $\pi$ as a \memph{variety over $\G_m^\ell$ with a $(\vta, n)$-diagonal $\G_m$-action}.

Recall that, for any functions $\phi : X \fun A$ and $\psi : Y \fun B$, we denote by  $\phi \oplus \psi$ the function $X \cap Y \fun A \times B$ given by $x \efun (\phi(x), \psi(x))$. Also, if $A$, $B$ are subsets of a group $(G, +)$ then $\phi + \psi$ is the function $X \cap Y \fun G$ given by $x \efun \phi(x) + \psi(x)$.

\begin{defn}\label{theta:C:cat}
An object of the category $\Var_{\C}^{\vta, n}$ is a variety  $\pi_1 \oplus \pi^* : Y \fun \G_m^2$ over $\G_m^2$ with a $\G_m$-action such that $\pi : Y \fun \G_m^\ell$ is  $(\vta, n)$-diagonal. A morphism between two such objects  is a morphism between the  varieties over $\G_m^2$ that is equivariant with respect to the $\G_m$-actions.

The category $\Var_{\C}^{\vta}$ is the colimit of the inductive system of  $\Var_{\C}^{\vta, n}$, $n \in \Z^+$ (the transition functors are  given as in Definition~\ref{defn:weighted}).
\end{defn}

For convenience, an object of $\Var_{\C}^{\vta, n}$ will sometimes be referred to as $(Y, \pi)$ or just $\pi$, with $\pi^*$ implicit, even though $\pi^*$ cannot be recovered from $\pi$ alone.

The Grothendieck groups $\ggk^{\vta, n} \Var_{\C}$ are constructed as usual. Since fiber product (reduced) over $\G_m^2$ with diagonal action induces a product operation,   $\ggk^{\vta, n} \Var_{\C}$ is a commutative ring. Set
\[
\ggk^{\vta} \Var_{\C} = \colim n \ggk^{\vta, n} \Var_{\C}.
\]


Observe that if $\ell = 1, 2$ then $\vta$ does not really have any bearing on the definitions of $(\vta, n)$-diagonality and the category $\Var_{\C}^{\vta, n}$. So,  in that case, we  shall just say \memph{$n$-diagonal} and write $\Var_{\C}^{\bm 1, n}$, etc., when $\ell = 1$ and $\Var_{\C}^{\bm 2, n}$, etc., when $\ell = 2$. The difference between $\Var_{\C}^{\bm 1, n}$ and $\Var_{\C}^{\tau_n}$ is  that, in the latter, witnesses for $n$-weightedness are implicit and hence morphisms are not required to respect them.

\begin{rem}\label{just:muhat}
Actually,  $\Var_{\C}^{\bm 1, n}$ is just the category $\Var_{\G_m}^{\G_m,n}$ as defined in \cite[\S~2.3]{guibert2006} and, by \cite[Lemma~2.5]{guibert2006}, it is equivalent to the category of varieties over $\C$ with $\mu_{n}$-actions, in particular, $\ggk^{\bm 1, n} \Var_{\C} \cong \ggk^{\mu_{n}} \Var_{\C}$ for all $n$ and hence we have an isomorphism
\begin{equation}\label{fib:at:1}
\Upsilon: \ggk^{\bm 1} \Var_{\C} \fun \ggk^{\hat \mu} \Var_{\C}.
\end{equation}

Denote by $\Var^{\mu_n}_{\G_m}$ the category of varieties $\xi : Z \fun \G_m$ over $\G_m$ with $\mu_n$-actions such that its fibers  are  $\mu_n$-invariant, and by $\Var^{\G_m, n}_{\G_m^2}$ the category of varieties $\xi : Z \fun \G_m^2$ over $\G_m^2$ with $\G_m$-actions such that the fibers of $\xi_1$ are  $\G_m$-invariant and $\xi_2$ is $n$-diagonal. The morphisms in both categories are those that are equivariant with respect to the group in question. By \cite[Lemma~2.5]{guibert2006} again, these two categories are equivalent.

Assume $\ell > 1$. For $(Y, \pi) \in \Var_{\C}^{\vta, n}$, let $\bar \pi$ be the morphism
\[
Y \to^{\pi_{\leq 2}} \G_m^2  \to^{(x,y) \efun xy^{-1}} \G_m
\]
By (\ref{good:act}), every fiber of $\bar \pi$ inherits a $\G_m$-action from $Y$ and hence $\bar \pi \oplus \pi^*$ is an object of  $\Var^{\G_m, n/\lambda}_{\G_m^2}$.  Conversely, for each object $\xi : Z \fun \G_m^2$  of  $\Var^{\G_m, n/\lambda}_{\G_m^2}$,  the morphism $\bar \xi$ on $Z$ given by $z \efun (\xi_1(z)\xi_2(z)^{\lambda \vta_1}, \xi_2(z))$ is an object of $\Var_{\C}^{\vta, n}$. These two assignments can be extended to functors that are inverse to each other.  So,
\[
\ggk^{\vta, n} \Var_{\C} \cong \ggk^{\G_m, n/\lambda} \Var_{\G_m^2} \cong \ggk^{\mu_{n/\lambda}} \Var_{\G_m}.
\]
Consequently, all the work  on $\ggk^{\vta} \Var_{\C}$  below may be considered as done on
\[
\ggk^{\hat \mu} \Var_{\G_m} \cong \colim n \ggk^{\mu_{n/\lambda}} \Var_{\G_m},
\]
where $\Var^{\hat \mu}_{\G_m}$ is the category of varieties over $\G_m$ whose fibers  are  $\hat \mu$-invariant with uniformly good $\hat \mu$-actions, in other words, $\Var^{\hat \mu}_{\G_m}$ is the colimit of the inductive system of $\Var^{\mu_n}_{\G_m}$, $n \in \Z^+$.

As in \cite{guibert2006}, the point here is that $\ggk^{\hat \mu} \Var_{\G_m}$ is much closer to objects that have been studied extensively in the literature and  hence for which we have a deeper understanding.
\end{rem}

If $Z \in \Var_{\C}^{\hat \tau}$ and $(Y, \pi) \in \Var_{\C}^{\vta, n}$ then $(Y \times Z, \pi \circ \pr_Y) \in \Var_{\C}^{\vta, n}$, where the $\G_m$-action on $Y \times Z$ is given by $c \cdot (y, z) = (c \cdot y, c^n \cdot z)$ for all $c \in \G_m$. This is compatible with the inductive systems in question  (the action $c^n \cdot z$ on the $Z$-factor is forced for this reason) and hence, after passing to the colimits, we see that  $\ggk^\vta \Var_{\C}$ is indeed a $\ggk^{\hat \tau} \Var_{\C}$-module.

\begin{defn}\label{C:cat:conv}
Assume $\ell > 1$. We construct a $\ggk^{\hat \tau} \Var_{\C}$-module homomorphism
\[
\Psi_\vta : \ggk^{\vta} \Var_{\C} \fun \ggk^{\bm 1} \Var_{\C}
\]
by induction on $\ell$ as follows.

For $(Y, \pi) \in \Var_{\C}^{\vta, n}$, let $(\pi_1 + \pi_2)^{-1}(0)$ and $Y \mi (\pi_1 + \pi_2)^{-1}(0)$ denote, in $\Var_{\C}$, the pullbacks of $\pi_{\leq 2} : Y \fun \G_m^2$ along the antidiagonal of $\G_m^2$ and its complement, respectively. By (\ref{good:act}), both varieties inherit a $\G_m$-action from $Y$.

For the base case $\ell =2$, we consider the $\G_m$-action on $(\pi_1 + \pi_2)^{-1}(0) \times \G_m$ whose second factor is given by $c \cdot z =  c^nz$. Then the expressions
\begin{equation}\label{conv:1:2}
[(Y \mi (\pi_1 + \pi_2)^{-1}(0), \pi_1 + \pi_2)], \quad [ ((\pi_1 + \pi_2)^{-1}(0) \times \G_m, \pr_{\G_m})]
\end{equation}
designate two elements in $\ggk^{\bm 1, n}\Var_{\C}$; they only depend on the class of $(Y, \pi)$ and hence may be denoted by $\dot \Psi_{\bm 2, n} ([(Y, \pi)])$, $\ddot \Psi_{\bm 2, n} ([(Y, \pi)])$, respectively. These assignments respect the defining relations of $\ggk^{\bm 2, n} \Var_{\C}$ and hence may be extended uniquely to two group homomorphisms $\dot \Psi_{\bm 2, n}$, $\ddot \Psi_{\bm 2, n}$. These group homomorphisms in turn are compatible with the inductive systems in question and hence, after passing to the colimits, we obtain two group homomorphisms  $\dot \Psi_{\bm 2}$, $\ddot \Psi_{\bm 2}$, which also respect the $\ggk^{\hat \tau} \Var_{\C}$-module structure. Set $\Psi_{\bm 2} = - (\dot \Psi_{\bm 2} - \ddot \Psi_{\bm 2})$.

For the inductive step $\ell > 2$, let $\vta' = (\vta_3, \vta_3, \vta_4, \ldots, \vta_\ell)$ and $\lambda'$ be the least positive integer such that $\lambda' \vta_i$ is an integer for every $3 \leq i \leq \ell$. Then $\lambda'$ divides $\lambda$. We consider the $\G_m$-action on $\G_m \times (\pi_1 + \pi_2)^{-1}(0)$  whose first factor is given by $c \cdot z = c^{n \vta_3}z$. Then the expression
\begin{equation}\label{conv:induc}
(Y^{\vta'}, \pi^{\vta'}) \coloneqq (\G_m \times (\pi_1 + \pi_2)^{-1}(0),  1_{\G_m} \times (\pi^{*})^{\lambda/\lambda'})
\end{equation}
designates an object of $\Var_{\C}^{\vta', n}$ whose class only depends on that of $(Y, \pi)$ and hence may be denoted by $\Psi_{\vta, n}^{\vta'}([(Y, \pi)])$. The assignments $\Psi_{\vta, n}^{\vta'}$, $n \in \Z^+$, may be extended to group homomorphisms  and their colimit $\Psi_{\vta}^{\vta'} : \ggk^{\vta} \Var_{\C} \fun \ggk^{\vta'} \Var_{\C}$ is a $\ggk^{\hat \tau} \Var_{\C}$-module homomorphism. Now we set
\[
\dot \Psi_\vta = \dot \Psi_{\vta'} \circ \Psi_{\vta}^{\vta'}, \quad \ddot \Psi_\vta = \ddot \Psi_{\vta'} \circ \Psi_{\vta}^{\vta'}, \quad \Psi_\vta = \Psi_{\vta'} \circ \Psi_{\vta}^{\vta'} = -(\dot \Psi_\vta - \ddot \Psi_\vta).
\]
\end{defn}

We could have defined $\Psi_\vta$ to be $\dot \Psi_\vta - \ddot \Psi_\vta$ instead of $-(\dot \Psi_\vta - \ddot \Psi_\vta)$. The negative sign at the front is inherited from the literature.

\begin{rem}
The case $\ell =2$ is special. For $(X, \pi_X) \in \Var_{\C}^{\bm 1, m}$ and $(Y, \pi_Y) \in \Var_{\C}^{\bm 1, n}$, let $\pi_X \times \pi_Y$ be the obvious morphism  $X \times Y \fun \G_m^{2}$. Then $(X \times Y, \pi_X \times \pi_Y)$ is an object of $\Var_{\C}^{\bm 2, mn}$ whose class only depends on those of $(X, \pi_X)$ and $(Y, \pi_Y)$. We  define a binary map on $\ggk^{\bm 1} \Var_{\C}$ by
\begin{equation}\label{C:convol}
  [(X, \pi_X)] * [(Y, \pi_Y)] = \Psi_{\bm 2}([(X \times Y, \pi_X \times \pi_Y)]) \in \ggk^{\bm 1} \Var_{\C}.
\end{equation}
Although the category $\Var_{\C}^{\bm 2}$ is not the same one used in \cite[\S~5.1]{guibert2006}, the proof of \cite[Proposition~5.2]{guibert2006} still goes through verbatim, which justifies referring to (\ref{C:convol}) as a convolution product (it is commutative,  associative, and unital).
\end{rem}

Recall that definability is construed in  $\puC$ with $\bb S = \C \cup \Q$, unless additional parameters are used.

\begin{defn}\label{resac}
An object of the category $\RV^{\ac}_{\vta}[k]$ is a definable triple of the form
\[
(U, f :  U \fun \RV^k, g = \ac_1 \oplus \ac^* : U \fun \vta_1^\sharp \times (1 / \lambda)^\sharp)
\]
such that, for every $r \in \ran(g)$, the pair $(g^{-1}(r), f \rest g^{-1}(r))$ is an object of $\RV[k]$, in other words, $f \rest g^{-1}(r)$ is finite-to-one (the category $\RV[k]$ here is of course formulated relative to the additional parameters $r$, that is, $\bb S$ is in effect  the subgroup of $\RV$ generated by $r$ over $\K^\times$). The function
\[
\ac = \ac_1 \oplus \bigoplus_{2 \leq i \leq \ell} (\ac^*)^{\lambda \vta_i}: U \fun \vta^\sharp
\]
is referred to as an \memph{angular component map} on $\bm U = (U, f)$. A definable bijection $F : U \fun V$  is a morphism between two such objects $(\bm U, g_{U})$, $(\bm V, g_{V})$ if $g_{U} = g_{V} \circ F$. Set $\RV^{\ac}_{\vta}[*] = \coprod_k \RV^{\ac}_{\vta}[k]$.

The category $\RES^{\ac}_{\vta}$ is formulated in the same way, but with $\RV[k]$ replaced by $\RES$.
\end{defn}

\begin{rem}\label{ac:cat:sub}
The assignment $(U, f, g) \efun (U, f \oplus g)$ induces a functor $\RV^{\ac}_{\vta}[*] \fun \RV[*]$ that identifies $\RV^{\ac}_{\vta}[*]$ with a subcategory of $\RV[*]$; similarly for $\RES^{\ac}_{\vta}$, $\RES$.
\end{rem}

As is the case with $\Var_{\C}^{\vta, n}$, we shall sometimes refer to  an object of $\RV^{\ac}_{\vta}[k]$ as $(\bm U, \ac)$, with $\ac^*$ implicit. For each $1 \leq i \leq \ell$, let $\ac_i = \pr_i \circ \ac$, $\ac_{>i} = \pr_{>i} \circ \ac$, etc.

The ring structure of $\ggk \RV^{\ac}_\vta[*]$ is induced by fiberwise disjoint union and fiberwise cartesian product in $\RV^{\ac}_\vta[*]$; similarly for other such categories. We also think of $\ggk \RV^{\ac}_\vta[*]$ as a $\ggk \RV[*]$-module and $\sggk \RES^{\ac}_{\vta}$ as a $\sggk^{\hat \tau} \RES$-module (the extra defining condition for ``$\sggk$'' in $\sggk \RES^{\ac}_{\vta}$ is in effect  imposed fiberwise).

If $\ell = 1$ (hence $\vta = 1$) then the subscript $\vta$ shall be dropped from the notation.

\begin{defn}\label{conv:RV}
Assume $\ell > 1$. We construct a $\ggk \RV[*]$-module homomorphism
\[
\Pi_\vta : \ggk \RV^{\ac}_{\vta}[*] \fun \ggk \RV^{\ac}[*]
\]
by induction on $\ell$, as follows.

Let $(\bm U, \ac) = (U, f, \ac_1 \oplus \ac^*) \in \RV^{\ac}_{\vta}[k]$.  Let $U'$ be the subset of $U$  determined by the antidiagonal condition $\ac_1(u) = - \ac_2 (u)$.

For the base case $\ell =2$, that is, $\vta = (1, 1)$, let $f_1 : U \fun \RV^{k+1}$ be the function given by $u \efun (f(u), \ac_1(u))$, similarly for $f_2$. The pairs $(U, f_1)$, $(U \mi U', f_1)$, $(U', f_1)$ are more suggestively denoted, respectively, by
\[
\bm U_1, \quad \bm U_1   \mi (\ac_1 + \ac_2)^{-1}(0), \quad (\ac_1 + \ac_2)^{-1}(0).
\]
The elements $\dot \Pi_{(1,1)}([(\bm U, \ac)])$, $\ddot \Pi_{(1,1)}([(\bm U, \ac)])$ in $\ggk \RV^{\ac}[k{+}1]$ are then given, respectively, by
\begin{equation}\label{RV:conv}
[(\bm U_1   \mi (\ac_1 + \ac_2)^{-1}(0), \ac_1 + \ac_2)], \quad [ ((\ac_1 + \ac_2)^{-1}(0) \times 1^\sharp, \pr_{1^\sharp})];
\end{equation}
here the second term is such that each fiber of $\pr_{1^\sharp}$ is a copy of $(\ac_1 + \ac_2)^{-1}(0)$, and hence is indeed an element in $\ggk \RV^{\ac}[k{+}1]$. These two assignments do not depend on the representative $(\bm U, \ac)$ or the choice between $f_1$ and $f_2$, and hence may be extended uniquely to two $\ggk \RV[*]$-module homomorphisms  $\dot \Pi_{(1,1)}$, $\ddot \Pi_{(1,1)}$ (the gradation has been shifted by $1$). Then set
$\Pi_{(1,1)} = - (\dot \Pi_{(1,1)} - \ddot \Pi_{(1,1)})$.

For the inductive step $\ell > 2$, let $\vta'$ and $\lambda'$ be as in Definition~\ref{C:cat:conv}. For every $r \in (1 / \lambda')^\sharp$, since there are only finitely many $(r_1, r_2) \in \vta_1^\sharp \times (1 / \lambda)^\sharp$ with $r_1 = - r_2^{\lambda \vta_2}$ and $r_2^{\lambda/\lambda'} = r$, the restriction  $f \rest (U' \cap ((\ac^{*})^{\lambda/\lambda'})^{-1}(r))$ is finite-to-one. So the triple
\begin{equation}\label{ind:step:con}
(\bm U^{\vta'}, \ac^{\vta'}) \coloneqq (\vta_3^\sharp \times U', f \circ \pr_{U'}, \id \times (\ac^{*})^{\lambda/\lambda'})
\end{equation}
designates an object of $\RV^{\ac}_{\vta'}[k]$ whose class only depends on that of $(\bm U, \ac)$ and the assignment $[(\bm U, \ac)] \efun [(\bm U^{\vta'}, \ac^{\vta'})]$ determines a $\ggk \RV[*]$-module homomorphism
\[
\Pi^{\vta'}_{\vta}: \ggk \RV^{\ac}_{\vta}[*] \fun \ggk \RV^{\ac}_{\vta'}[*].
\]
Thus, we may set
\[
\dot \Pi_\vta = \dot \Pi_{\vta'} \circ \Pi_{\vta}^{\vta'}, \quad \ddot \Pi_\vta = \ddot \Pi_{\vta'} \circ \Pi_{\vta}^{\vta'}, \quad \Pi_\vta = \Pi_{\vta'} \circ \Pi_{\vta}^{\vta'} = - (\dot \Pi_\vta - \ddot \Pi_\vta).
\]

There is a similar construction resulting in a $\sggk^{\hat \tau} \RES$-module homomorphism
\[
\sggk \RES^{\ac}_{\vta} \fun \sggk \RES^{\ac},
\]
which is denoted by $\Pi_\vta$ as well. Its construction is actually simpler since the categories $\RES^{\ac}_{\vta}$, $\RES^{\ac}$ are not graded and the function $f$ is irrelevant. Moreover, in light of the ring homomorphism $\bb E_b : \ggk \RV[*] \fun \sggk^{\hat \tau} \RES$, this $\Pi_\vta$ may be viewed as a $\ggk \RV[*]$-module homomorphism too.
\end{defn}

For $(\bm U, \ac_{U}) \in \RV^{\ac}[k]$ and $(\bm V, \ac_{V}) \in \RV^{\ac}[l]$, let $\ac_{U} \times \ac_{V}$ be the obvious function from  $U \times V$ into $(1,1)^{\sharp}$. Then the class
\[
[(\bm U \times \bm V, \ac_{U} \times \ac_{V})] \in \ggk \RV^{\ac}_{(1,1)}[k{+}l]
\]
only depends on the classes $[(\bm U, \ac_{U})]$, $[(\bm V, \ac_{V})]$, not their representatives. Set
\[
[(\bm U, \ac_{U})] * [(\bm V, \ac_{V})] = \Pi_{(1,1)}([(\bm U \times \bm V, \ac_{U} \times \ac_{V})]) \in \ggk \RV^{\ac}[k{+}l{+}1],
\]
which may be thought of as a convolution product of the two classes.

The following lemma only serves to confirm the structural resemblance of the binary map $*$ here to the convolution product (\ref{C:convol}). It will not be of any use beyond this point.

\begin{lem}
The binary map $*$ on $\ggk \RV^{\ac}[*]$ is commutative and associative. Moreover, for all $(\bm U, \ac) = (U, f, \ac) \in \RV^{\ac}[k]$,
\[
[(\bm U, \ac)] * 1 = [(\bm U, \ac)][1] \in \ggk \RV^{\ac}[k{+}1].
\]
\end{lem}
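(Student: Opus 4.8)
The plan is to prove all three assertions directly on representatives, using only the defining formulas for $\dot\Pi_{(1,1)}$ and $\ddot\Pi_{(1,1)}$ in Definition~\ref{conv:RV} together with additivity in $\ggk\RV^\ac[*]$; this transcribes the proof of \cite[Proposition~5.2]{guibert2006} to the present categories, which is why the statement is recorded only as confirmation of the analogy. For commutativity, I would first observe that $\dot\Pi_{(1,1)}$ and $\ddot\Pi_{(1,1)}$, hence $\Pi_{(1,1)}$, are invariant under the involution $\tau$ of $\ggk\RV^\ac_{(1,1)}[*]$ transposing the two angular-component coordinates: the antidiagonal locus $(\ac_1+\ac_2)^{-1}(0)$ and the function $\ac_1+\ac_2$ are symmetric in $\ac_1,\ac_2$, and the only remaining asymmetry --- recording $\ac_1$ via $f_1$ rather than $\ac_2$ via $f_2$ --- is immaterial by the remark following (\ref{RV:conv}). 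Since a morphism in $\RV^\ac_{(1,1)}[k]$ is only required to intertwine angular components, the flip $(u,v)\mapsto(v,u)$ identifies $(\bm U\times\bm V,\ac_U\oplus\ac_V)$ with $\tau(\bm V\times\bm U,\ac_V\oplus\ac_U)$ in $\RV^\ac_{(1,1)}[k{+}l]$, and applying $\Pi_{(1,1)}$ gives $[(\bm U,\ac_U)]*[(\bm V,\ac_V)]=[(\bm V,\ac_V)]*[(\bm U,\ac_U)]$.

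For the unit identity, take $1\in\ggk\RV^\ac[0]$ to be the class of $1^\sharp$ with its tautological angular component (the identity of $\ggk\RV^\ac[*]$ under fiberwise product over $1^\sharp$), so that $[(\bm U,\ac)]*1$ is $\Pi_{(1,1)}$ of the object with underlying set $U\times 1^\sharp$, angular component $(u,s)\mapsto(\ac(u),s)$, and the evident $\RV^k$-valued function. The antidiagonal locus here is the graph $\{(u,s):s=-\ac(u)\}\cong U$, so $\ddot\Pi_{(1,1)}$ of the object is the class of $U\times 1^\sharp$ carrying the free coordinate $w\in 1^\sharp$ as angular component; splitting this set along $w=\ac(u)$ versus $w\neq\ac(u)$, the first piece is exactly $[(\bm U,\ac)][1]$, while the fiberwise translation $s\mapsto w:=\ac(u)+s$ of $1^\sharp$-slices carries the second piece onto the object representing $\dot\Pi_{(1,1)}$. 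Hence $\Pi_{(1,1)}=\ddot\Pi_{(1,1)}-\dot\Pi_{(1,1)}$ of the object equals $[(\bm U,\ac)][1]$, as required.

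Associativity is where I expect the real work to lie. The plan is to expand both $([(\bm U,\ac_U)]*[(\bm V,\ac_V)])*[(\bm W,\ac_W)]$ and $[(\bm U,\ac_U)]*([(\bm V,\ac_V)]*[(\bm W,\ac_W)])$ by applying the definition twice, so that each becomes a signed sum of four classes indexed by the choice of $\dot$ or $\ddot$ at the two applications of $\Pi_{(1,1)}$, each represented by an explicit object on a locus inside $U\times V\times W$ decorated with one or two free $1^\sharp$-coordinates. I would then match the two expansions term by term: the purely generic terms on the two sides are objects on $\{\ac_U+\ac_V\neq 0\}\cap\{\ac_U+\ac_V+\ac_W\neq 0\}$ and on $\{\ac_V+\ac_W\neq 0\}\cap\{\ac_U+\ac_V+\ac_W\neq 0\}$ respectively, both with angular component $\ac_U+\ac_V+\ac_W$, and their discrepancy is absorbed by the mixed terms through additivity over the pairwise-antidiagonal loci $\{\ac_U+\ac_V=0\}$, $\{\ac_V+\ac_W=0\}$ and their common refinement, together with the translation bijections on $1^\sharp$ used above and the transposition symmetry from the commutativity step. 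This is precisely the bookkeeping of \cite[Proposition~5.2]{guibert2006} carried over to $\RV^\ac_{(1,1)}$; the only genuine difficulty is keeping track of which free $1^\sharp$-coordinate is introduced at which stage so that the changes of variable in the two bracketings line up, after which each comparison reduces to a finite disjoint-union identity.
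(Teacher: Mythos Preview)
Your proposal is correct and follows essentially the same route as the paper. The paper likewise defers commutativity and associativity to \cite[Proposition~5.2]{guibert2006} and spells out only the unit identity; its argument there is the same translation trick $(u,s)\mapsto(u,\ac(u)+s)$ you use, presented as a fiberwise comparison of classes rather than as a single global bijection.
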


Here $1 \in \ggk \RV^{\ac}[0]$ is the multiplicative identity in $\ggk \RV^{\ac}[*]$ and is represented by the triple $(1^\sharp, \infty, \id)$, and $[1] \in \ggk \RV^{\ac}[1]$ is represented by the triple $(1^\sharp, \id, \id)$.

\begin{proof}
The formal computations involved  are essentially the same as those in the proof of \cite[Proposition 5.2]{guibert2006}. We shall just write down some details for the second claim since the expected convolution identity $1$ is actually off by a factor, namely $[1]$, in this setting.

For ease of notation, the function $\ac \circ \pr_U$ on $U \times 1^\sharp$ is still denoted by $\ac$, similarly for $f$ and $\id_{1^\sharp}$. Write $(U \times 1^\sharp, f \oplus \ac)$ as $\bm U  \times 1^\sharp$. Then $[(\bm U, \ac)] * 1 \in \ggk \RV^{\ac}[k{+}1]$ is given by
\begin{equation}\label{conv:id}
- ( [(\bm U \times 1^\sharp  \mi (\ac + \id_{1^\sharp})^{-1}(0), \ac + \id_{1^\sharp})] - [ ((\ac + \id_{1^\sharp})^{-1}(0) \times 1^\sharp, \pr_{1^\sharp})]).
\end{equation}
In this expression, for each $r \in 1^\sharp$, we have, in $\ggk \RV[k{+}1]$,
\[
  [(\ac + \id_{1^\sharp})^{-1}(r)]  = [(U \mi \ac^{-1}(r), f \oplus \ac)] \dand
  [\pr_{1^\sharp}^{-1}(r)]  = [(U, f \oplus \ac)].
\]
So (\ref{conv:id}) may also be written as
\[
-([(\bm U \times 1^\sharp  \mi (\ac - \id_{1^\sharp})^{-1}(0), \pr_{1^\sharp})] - [(\bm U \times 1^\sharp, \pr_{1^\sharp})]).
\]
Of course this is just $[((\ac - \id_{1^\sharp})^{-1}(0), \pr_{1^\sharp})]$. In $\ggk \RV[k{+}1]$, since we now have, for every $r \in 1^\sharp$, $[\pr_{1^\sharp}^{-1}(r)] = [(\ac^{-1}(r) \times r, f \times \id_r)]$, it follows that $[((\ac - \id_{1^\sharp})^{-1}(0), \pr_{1^\sharp})] = [(\bm U, \ac)][1]$.
\end{proof}

\subsection{Commuting with the convolution operators}

Next, we show that the middle and right squares of (\ref{convol:comm}) indeed commute.

\begin{rem}\label{ebac}
Let $(\bm U, g = \ac_1 \oplus \ac^*) \in \RV^{\ac}_{\vta}[*]$. By Proposition~\ref{red:D:iso} and compactness, there exists a definable finite partition $(B_i)_i$ of $\vta_1^\sharp \times (1 / \lambda)^\sharp$ such that every fiber $g^{-1}(r)$ is, uniformly over each $B_i$, $r$-definably bijective to a disjoint union of products $\bm U_{rij} \times D_{rij}^\sharp$ with $\bm U_{rij} \in \RES[*]$ and $D_{rij} \in \Gamma[*]$. Actually, by stable embeddedness (see Remark~\ref{pillars}),  we may write $D_{rij}$ as $D_{ij}$ since it must be the case that $D_{rij} = D_{r'ij}$ for any other $r' \in B_i$; similarly for $\vrv(\bm U_{rij})$, which may be assumed to be a singleton. Let $\bm U_{ij} = \bigcup_{r \in B_i} \bm U_{rij} \times r$ and $g_{ij}: \bm U_{ij} \fun B_i$ be the obvious coordinate projection.
\end{rem}

\begin{rem}\label{theta:EB}
Keeping the notation of Remark~\ref{ebac}, we see that, over each $B_i$, there are  elements $[(V_{i}, g_{i})]$, $[(V_{i'}, g_{i'})]$ in $\sggk \RES^{\ac}_{\vta}$, depending on the choice of $\bm U_{ij}$ and $D_{ij}$, such that
\begin{equation*}
\bb E_b([g^{-1}(r)]) = [g_{i}^{-1}(r)] - [g_{i'}^{-1}(r)].
\end{equation*}
This equality is construed in $\sggk^{\hat \mu} \RES$ with, say, $\bb S = \C \dpar{t^{1 / \lambda}}$, but in general not with $\bb S = \C \dpar{t}$, unless  the isomorphism $\Theta^{\hat \mu}$ is applied on both sides, in which case the difference between $\C \dpar{t^{1 / \lambda}}$ and $\C \dpar{t}$ is neutralized and the equality happens in $\ggk^{\hat \mu} \Var_{\C}$. Thus, $[(V_{i}, g_{i})] - [(V_{i'}, g_{i'})]$ does not depend on the choice of $\bm U_{ij}$ and $D_{ij}$. Setting
\begin{equation}\label{fib:E}
[(\bm U, g)] \efun \sum_{i} ([(V_{i}, g_{i})] - [(V_{i'}, g_{i'})])
\end{equation}
yields a ring homomorphism
\[
\bb E^{\ac}_{b,\vta} : \ggk \RV^{\ac}_{\vta}[*] \fun \sggk \RES^{\ac}_{\vta},
\]
which is also an $\ggk \RV[*]$-module homomorphism.

Note that $\bb E^{\ac}_{b,\vta}([(\bm U, g)])$ may be understood as a ``function'' into $\vta_1^\sharp \times (1 / \lambda)^\sharp$ whose ``fibers'' are of the form $\bb E_b([g^{-1}(r)])$, which has nothing to do with the partition $(B_i)_i$, but it takes extra work to make sense of what ``function'' and ``fibers'' mean here, and the point of the partition $(B_i)_i$ is just to show the existence of such a finite sum as in (\ref{fib:E}) and thereby that $\bb E^{\ac}_{b,\vta}$ is well-defined.

Taking quotient by $(\bm P - 1)$ fiberwise, we see that the corresponding ideal of $\ggk \RV^{\ac}_{\vta}[*]$, still denoted by $(\bm P - 1)$, vanishes along $\bb E^{\ac}_{b,\vta}$.
\end{rem}

The map $\Pi_\vta$ on $\ggk \RV^{\ac}_{\vta}[*]$ is indeed related to the map $\Pi_\vta$ on $\sggk \RES^{\ac}_{\vta}$ via $\bb E^{\ac}_{b,\vta}$ as indicated in the middle square of (\ref{convol:comm}):

\begin{lem}\label{RV:RES:conv}
As $\ggk \RV[*]$-module homomorphisms, $\bb E^{\ac}_{b} \circ \dot \Pi_\vta = \dot \Pi_\vta \circ \bb E^{\ac}_{b,\vta}$, similarly for $\ddot \Pi_\vta$ and hence for $\Pi_\vta$.
\end{lem}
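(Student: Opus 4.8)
The plan is to reduce the identity $\bb E^{\ac}_{b} \circ \dot\Pi_\vta = \dot\Pi_\vta \circ \bb E^{\ac}_{b,\vta}$ (and its dotted companion) to the already-established compatibility of $\bb E_b$ with disjoint union, cartesian product, and the pullback along the antidiagonal, carried out fiberwise over $\vta^\sharp$. First I would dispose of the inductive step. Recall from Definition~\ref{conv:RV} that both $\Pi_\vta$ on $\ggk \RV^{\ac}_\vta[*]$ and $\Pi_\vta$ on $\sggk \RES^{\ac}_\vta$ are defined by the same recursion $\dot\Pi_\vta = \dot\Pi_{\vta'}\circ\Pi^{\vta'}_\vta$, and the transition operator $\Pi^{\vta'}_\vta$ is induced by the assignment $(\bm U,\ac)\mapsto(\vta_3^\sharp\times U',\,f\circ\pr_{U'},\,\id\times\ac_{>2})$ — that is, it restricts to the antidiagonal in the first two $\ac$-coordinates and then reindexes. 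Since $\bb E^{\ac}_{b,\vta}$ is defined fiberwise over $\vta^\sharp$ by $r\mapsto \bb E_b([\ac^{-1}(r)])$ (Remark~\ref{theta:EB}), and the fiber of $\Pi^{\vta'}_\vta(\bm U,\ac)$ over $(\vta_3^\sharp\times\ran(\ac_{>2}))$ is just the fiber of $(\bm U,\ac)$ over the antidiagonal locus base-changed by $\vta_3^\sharp$, the naturality of $\bb E_b$ under base change and cartesian product with a fixed set gives $\bb E^{\ac}_{b,\vta'}\circ\Pi^{\vta'}_\vta = \Pi^{\vta'}_\vta\circ\bb E^{\ac}_{b,\vta}$ at the level of fibers, hence globally. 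So by induction on $\ell$ it suffices to treat the base case $\ell = 2$, $\vta = (1,1)$.

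For the base case, I would work fiber-by-fiber over the antidiagonal parameter. Fix a representative $(\bm U,\ac)=(U,f,\ac)\in\RV^{\ac}_{(1,1)}[k]$. By definition $\dot\Pi_{(1,1)}([(\bm U,\ac)])$ is the class of $(\bm U_1\mi(\ac_1+\ac_2)^{-1}(0),\,\ac_1+\ac_2)$ and $\ddot\Pi_{(1,1)}([(\bm U,\ac)])$ is the class of $((\ac_1+\ac_2)^{-1}(0)\times 1^\sharp,\pr_{1^\sharp})$; the same formulas, read in $\RES$ rather than $\RV[k]$, define $\dot\Pi_{(1,1)}$, $\ddot\Pi_{(1,1)}$ on $\sggk\RES^{\ac}_{(1,1)}$. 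Now $\bb E^{\ac}_b$ and $\bb E^{\ac}_{b,(1,1)}$ both act by applying the ordinary retraction $\bb E_b$ fiberwise: for a fixed value $s$ of $\ac_1+\ac_2$, the fiber of $\dot\Pi_{(1,1)}([(\bm U,\ac)])$ over $s$ is the object $(U\cap\{\ac_1+\ac_2=s\},\,f\oplus\ac_1)$ of $\RV[k{+}1]$, and $\bb E_b$ of it equals the fiber over $s$ of $\dot\Pi_{(1,1)}(\bb E^{\ac}_{b,(1,1)}([(\bm U,\ac)]))$ precisely because $\bb E_b$ is a ring homomorphism that commutes with the slicing $\{\ac_1+\ac_2=s\}$ (this slicing is a definable partition of $U$, and $\bb E_b$ is additive over definable partitions and natural in parameters, cf. Proposition~\ref{base:Eb:Eg} and Remark~\ref{theta:EB}). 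The same argument handles $\ddot\Pi_{(1,1)}$, where the fiber is just a copy of the antidiagonal slice times $1^\sharp$ and $\bb E_b$ of a product with $1^\sharp$ is computed by the tensor formula (\ref{eb:eg:defn}). I would also note that the partition $(B_i)_i$ appearing in Remarks~\ref{ebac}--\ref{theta:EB} is only a bookkeeping device: $\bb E^{\ac}_b([(\bm U,\ac)])$ is intrinsically the fiberwise $\bb E_b$, so it is legitimate to verify the identity one fiber at a time and then reassemble. Combining the dotted and doubly-dotted identities and the sign convention $\Pi_\vta = -(\dot\Pi_\vta-\ddot\Pi_\vta)$ yields $\bb E^{\ac}_b\circ\Pi_\vta = \Pi_\vta\circ\bb E^{\ac}_{b,\vta}$; $\ggk\RV[*]$-linearity of all maps involved is immediate since every construction is induced by fiberwise cartesian product with a fixed object of $\RV[*]$.

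The main obstacle I anticipate is bookkeeping around the grading shift and the angular-component parameter, not any genuine mathematical subtlety: one must be careful that ``$\bb E_b$ applied fiberwise over $\vta^\sharp$'' is well-defined (this is exactly the content of Remark~\ref{theta:EB}, that the difference $[(V_i,\ac_i)]-[(V_{i'},\ac_{i'})]$ is independent of the chosen decomposition), and that the antidiagonal locus $(\ac_1+\ac_2)^{-1}(0)$ interacts correctly with the tensor decomposition $\ggk\RES^{\ac}_\vta\cong$ (fiberwise $\sggk\RES\otimes\Gamma$) used to define $\bb E^{\ac}_{b,\vta}$. The cleanest way to sidestep this is to phrase everything in terms of the intrinsic fiberwise description of $\bb E^{\ac}_{b,\vta}$ from Remark~\ref{theta:EB} and invoke the naturality of $\bb E_b$ in parameters, rather than manipulating the finite partition $(B_i)_i$ explicitly. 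With that in hand the proof is a short diagram chase fiber by fiber plus the inductive reduction, so I would keep it to a paragraph or two.
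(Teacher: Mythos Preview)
Your proposal is correct and follows essentially the same route as the paper: induction on $\ell$, with the inductive step reducing to the compatibility $\bb E^{\ac}_{b,\vta'}\circ\Pi^{\vta'}_\vta=\Pi^{\vta'}_\vta\circ\bb E^{\ac}_{b,\vta}$ (the paper's equation (\ref{conv:commu})), and the base case $\ell=2$ handled by a fiberwise reduction. The only difference is cosmetic: in the base case the paper explicitly invokes the decomposition of Remark~\ref{ebac} to write $[(\bm U,\ac)]=\sum_{ij}[(\bm U_{ij},\ac_{ij})]\cdot[D_{ij}^\sharp]$ with $(\bm U_{ij},\ac_{ij})$ already in $\RES^{\ac}_{(1,1)}$, and then uses $\ggk\RV[*]$-linearity of $\Pi_{(1,1)}$ to reduce to the tautological case where fibers are already in $\RES$; your phrase ``$\bb E_b$ commutes with the slicing $\{\ac_1+\ac_2=s\}$'' encodes the same reduction, and making it precise forces one back through that decomposition. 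So the two arguments coincide once unpacked.
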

\begin{proof}
Although the case of $\Pi_\vta$ follows immediately from those of $\dot \Pi_\vta$ and $\ddot \Pi_\vta$, we shall show this for $\Pi_\vta$ directly using the same argument. To that end,  it is enough to consider elements in $\ggk \RV^{\ac}_{\vta}[*]$ of the form $[(\bm U, g)]$, since the general case would follow from $\ggk \RV[*]$-linearity.

We do induction on $\ell$. For the base case $\ell = 2$, that is, $\vta = (1,1)$, by Remark~\ref{ebac}, using the notation there, we have
\[
 [(\bm U, g)] =  \sum_{ij} [(\bm U_{ij}, g_{ij})][(D_{ij}^\sharp \times B_i, \pr_{B_i})].
\]
Since $\Pi_{(1,1)}$ is a $\ggk \RV[*]$-module homomorphism, it follows that
\[
\Pi_{(1,1)}([(\bm U, g)] ) =   \sum_{ij} [D_{ij}^\sharp]\Pi_{(1,1)}( [(\bm U_{ij}, g_{ij})]).
\]
Let $n_{ij} = \chi_b(D_{ij})$. The gradation is forgotten by $\bb E^{\ac}_{b}$, so we have
\[
(\bb E^{\ac}_{b} \circ \Pi_{(1,1)})([(\bm U, g)] ) =  \sum_{ij} n_{ij} \Pi_{(1,1)}( [(U_{ij}, g_{ij})]) ,
\]
where $(U_{ij}, g_{ij})$ stands for the obvious object of $\RES^{\ac}_{(1,1)}$ in relation to $(\bm U_{ij}, g_{ij})$. The right-hand side of this equality also equals $(\Pi_{(1,1)} \circ \bb E^{\ac}_{b,(1,1)})([(\bm U, g)] )$.

For the inductive step $\ell > 2$, let $\vta'$ be as in Definition~\ref{conv:RV}. Then a similar computation  shows that, for all $ij$,
\begin{equation}\label{conv:commu}
(\bb E^{\ac}_{b,\vta'} \circ \Pi_\vta^{\vta'})([(\bm U_{ij}, g_{ij})]) = (\Pi_\vta^{\vta'} \circ \bb E^{\ac}_{b,\vta})([(\bm U_{ij}, g_{ij})]).
\end{equation}
So the desired equality follows from the definition of $\Pi_\vta$ and the inductive hypothesis.
\end{proof}

The $\hat \tau$-action on an object $(U, \ac)\in \RES^{\ac}_{\vta}$ must factor through some $\tau_n$ such that $n  / \lambda$ and hence every $n \vta_i$ are  integers. It may also be interpreted as a $\G_m$-action subject to the condition
\[
\ac(c \cdot u)= \rv(c^{n \vta}) \ac(u), \quad \text{for all $u \in U$ and all $c \in \G_m$}.
\]
Let $\RES^{\ac}_{\vta, n}$ be the full subcategory of $\RES^{\ac}_{\vta}$ of those objects for which this condition holds. Thus, we have obtained an inductive system of categories $\RES^{\ac}_{\vta, n}$ such that
\[
\RES^{\ac}_{\vta} = \colim n \RES^{\ac}_{\vta, n} \dand \sggk \RES^{\ac}_{\vta} = \colim n \sggk \RES^{\ac}_{\vta, n};
\]

\begin{lem}\label{thetaac}
For each $n$ there is a ring isomorphism
\[
\Theta^{\ac}_{\vta, n} : \sggk \RES^{\ac}_{\vta, n} \fun \ggk^{\vta,n} \Var_{\C},
\]
determined by the assignment $[(V, g)] \efun [\tbk(V, g)]$ for $\vrv(V)$ a singleton,  and hence a ring isomorphism
\[
\Theta^{\ac}_{\vta} : \sggk \RES^{\ac}_{\vta} \fun \ggk^{\vta} \Var_{\C}.
\]
Moreover, under the ring homomorphism $\Theta^{\hat \tau} \circ \bb E_b$, we have $\Theta^{\ac} \circ \dot \Pi_\vta = \dot \Psi_\vta \circ \Theta^{\ac}_{\vta}$ as $\ggk \RV[*]$-module homomorphisms, similarly for $\ddot \Pi_\vta$, $\ddot \Psi_\vta$ and hence for $\Pi_\vta$, $\Psi_\vta$.
\end{lem}

This shows that the right square of (\ref{convol:comm}) commutes.

Note that the set $\tbk(V, g)$ is definable without using the implicit reduced cross-section $\rcsn$; in other words, varying $\rcsn$ will not change $\tbk(V, g)$, but does change the bijection in question, and that is why $\tbk(V, g)$ inherits the $\hat \tau$-action on $(V, g)$.

\begin{proof}
The situation here is very similar to that in \cite[\S~4.3]{hru:loe:lef} or in Remarks~\ref{const:theta} and \ref{theta:suj}, so we shall be brief. If $\vrv(V)$ is a singleton then the graph of $\tbk(g)$ is just a constructible set, in fact uniformly so fiberwise. So the assignment induces a homomorphism $\Theta^{\ac}_{\vta, n}$ at the semiring level and hence at the ring level. If $\tbk(V, g)$ and $\tbk(V', g')$ are isomorphic in $\Var_{\C}^{\vta,n}$ then the isomorphism may be twisted to one between $(V, g)$ and $(V', g')$ in $\RES^{\ac}_{\vta, n}$. So $\Theta^{\ac}_{\vta, n}$ is injective. On the other hand, since objects in  $\Var_{\C}^{\vta,n}$ and $\RES^{\ac}_{\vta, n}$ are all endowed fiberwise with $\mu_{n/\lambda}$-actions via restriction, the argument for surjectivity in Remark~\ref{theta:suj} can be modified to work  for $\Theta^{\ac}_{\vta, n}$.

The second claim follows from an inductive argument, similar to the one in the proof of Lemma~\ref{RV:RES:conv}.
\end{proof}

\begin{rem}\label{matchG}
Let $\eta = (\eta_1, \ldots, \eta_\ell)$ be another sequence of elements in the interval $(0, 1] \sub \Q^+$ such that $\eta_i / \eta_\ell = \vta_i$ for every $1 \leq i \leq \ell$. Then there is a $\sigma \in \aut(\RV/ \K^\times)$ with $\sigma(\eta_\ell) = 1$ and hence $\sigma(\eta_i) = \vta_i$ for every $2 \leq i \leq \ell$. If $\sigma'$ is another such automorphism then there is a $\tau \in \aut(\RV / \kuq)$ such that $\sigma = \sigma' \circ \tau$, and hence $\sigma$, $\sigma'$ induce the same endofunctors of $\RV[*]$, $\RES$. So, in light of Remark~\ref{ac:cat:sub}, there are categories $\RV^{\ac}_{\eta}[*]$, $\RES^{\ac}_{\eta}$ and  canonical isomorphisms, both denoted by $\Delta_{\eta}$ for simplicity, that fit in the following commutative diagram:
\begin{equation*}
\bfig
\Square(0,0)/->`->`->`->/<400>[{\ggk \RV^{\ac}_{\eta}[*]}`{\sggk \RES^{\ac}_{\eta}}`{\ggk \RV^{\ac}_{\vta}[*]}`{\sggk \RES^{\ac}_{\vta}}; {\bb E_{b,\eta}^{\ac}}`\Delta_{\eta}`\Delta_{\eta}`{\bb E^{\ac}_{b, \vta}}]
\efig
\end{equation*}
This means that, in particular, we do not have to restrict the discussion, both above and below, to the case $\vta_\ell = 1$ (we have done so above for simplicity).
\end{rem}

\subsection{Decomposing the composite Milnor fiber}\label{const:intac}
%



\begin{defn}\label{defn:multi:ac:int}
An object of the category $\VF_\vta^{\ac}$ is a definable function of the form  $f = f_1 \oplus f^* : A \fun \vta_1^{\sharp\sharp} \times (1 / \lambda)^{\sharp\sharp}$  such that, for all $r \in \vta_1^\sharp \times (1 / \lambda)^\sharp$,
\begin{itemize}[leftmargin=*]
  \item\label{lam:full:fib}  if $f(A) \cap r^\sharp \neq \0$  then $r^\sharp \sub f(A)$,
  \item\label{gen:lam:int}  $\int [f^{-1}(a)] = [\bm U_{r}] /  (\bm P - 1)$ only depends on $r$, not the choice of $a \in r^\sharp$.
\end{itemize}
Note that if $\ell = 1$ then both conditions are redundant: for the first one, the function $f : A \fun \vta_1^\sharp$ must be surjective because, with $\bb S = \C \cup \Q$,  no proper nonempty subset of $\gamma^{\sharp\sharp}$ is definable for any $\gamma \in \Q^\times$ (this fact will be used several times below), and the second one is guaranteed by Lemma~\ref{rv:int:bun} (but not for functions into $\MM^\ell$ with $\ell > 1$). The function
\[
\bar f = f_1 \oplus \bigoplus_{2 \leq i \leq \ell} (f^*)^{\lambda \vta_i}: A \fun \vta^{\sharp\sharp}
\]
is referred to as an \memph{angular component map} on $A$.

If $g : B \fun \vta_1^{\sharp\sharp} \times (1 / \lambda)^{\sharp\sharp}$ is another object of $\VF^{\ac}_\vta$ then any definable bijection $F : A \fun B$ with $g \circ F  = f$ is a \memph{morphism} of $\VF^{\ac}_\vta$.
\end{defn}

\begin{rem}
Assume $\ell > 1$. Let $f$ be as above and $r \in \rv(\bar f(A)) \sub \vta^\sharp$. Then, for every $2 \leq i \leq \ell$,  by the first condition in Definition~\ref{defn:multi:ac:int}, it must be the case that $r_1^\sharp \times r_i^\sharp \sub (f_1 \oplus (f^*)^{\lambda \vta_i})(A)$. On the other hand, for all  $a, b \in \MM$ with $a^{\lambda \vta_2} = b^{\lambda \vta_2}$ and all $2 < i \leq \ell$, if $\rv(a^{\lambda \vta_i}) = \rv(b^{\lambda \vta_i})$ then  $a^{\lambda \vta_i} = b^{\lambda \vta_i}$. So $r^\sharp \cap \bar f(A)$ may be written as the graph of an $r$-definable function $r^\sharp_{\leq 2} \fun r^\sharp_{> 2}$ that only depends on $r$, not $f$, and whose projection into each $r_i^\sharp$, $2 < i \leq \ell$, is surjective.
\end{rem}

\begin{rem}\label{int:to:resac:2}
Given two  functions $\phi, \psi : A \fun \MM$ that are definable over $\bb S = \C$, we manufacture an object of $\VF^{\ac}_\vta$ from them as follows.
The trick is to replace one of them with a sufficiently large power of itself.

For $a \in \MM^2$, write  $\int [(\phi \oplus \psi)^{-1}(a)] = [\bm U_{a}] /  (\bm P - 1)$, which makes sense over the larger substructure $\bb S \la a\ra $. However, as we have just pointed out above, unlike in Lemma~\ref{rv:int:bun},  $\bm U_{a}$ does depend on the parameter $a$. Anyway, by  compactness, there is a definable finite partition $(B_i)_i$ of $\MM^2$ such that the objects $\bm U_{a}$ are defined uniformly over each $B_i$. Since each $\vv(B_i) \sub (\Q^+)^2$ is a cone based at the origin (because $\bb S = \C$), there are a $B_i$ and an $M \in \Z^+$ such that $\alpha^{\sharp\sharp} \times (0, \alpha / (M\vta_1)]^{\sharp\sharp} \sub B_i$ for all $\alpha \in \Q^+$. Let $\xi(x, y, \ldots)$ be a quantifier-free formula that defines the object $\bm U_{a}$ over $a \in B_i$. Then $M$ may be chosen to be so large that for all $a, a' \in B_i$ with $\rv(a) = \rv(a')$ and every term in $\xi(x, y, \ldots)$ of the form $\rv(F(x, y))$, where $F(x,y) \in \C[x, y]$, we have $\rv(F(a)) = \rv(F(a'))$ and hence $\bm U_{a} = \bm U_{a'}$. Therefore, for all $r  \in (\RV^{\circ\circ})^2$ with $\vrv(r_1) / \vta_1 \geq M \vrv(r_2)$, $\bm U_{a}$  does not depend on the choice of $a \in r^\sharp$. It follows that if $M$ is divisible by $\lambda$ as well, in particular, if $\lambda$ itself is sufficiently large, then the restriction of $\phi \oplus \psi^{M / \lambda}$ to the set $(\phi \oplus \psi)^{-1}(\vta_1^{\sharp\sharp} \times (1 / M)^{\sharp\sharp})$ satisfies the two conditions in Definition~\ref{defn:multi:ac:int}.

Alternatively, there are a $B_i$ and an $N \in \Z^+$ such that $\alpha^{\sharp\sharp} \times [N\alpha / (\lambda\vta_1), \infty)^{\sharp\sharp} \sub B_i$ for all $\alpha \in \Q^+$. If $N$ is sufficiently large then, for all $r = (r_1, r_2) \in (\RV^{\circ\circ})^2$ with $N \vrv(r_1) \leq  \lambda \vta_1 \vrv(r_2)$, in particular, for $r \in (\vta_1/N)^{\sharp} \times (1 / \lambda)^{\sharp}$, $\bm U_{a}$  does not depend on the choice of $a \in r^\sharp$ and hence the restriction of $\phi^N \oplus \psi$ to the set $(\phi \oplus \psi)^{-1}((\vta_1/N)^{\sharp\sharp} \times (1 / \lambda)^{\sharp\sharp})$ is as desired.
\end{rem}

The ring structure of $\ggk \VF^{\ac}_\vta$ is induced by fiberwise disjoint union and fiberwise cartesian product in $\VF^{\ac}_\vta$. It is also a $\ggk \VF_*$-module.

\begin{defn}
Let $S_\vta \sub \vta^{\sharp\sharp}$ be the subset such that $a \in S_\vta$ if and only if $\vv(\sum_i a_i) = \vta_\ell = 1$. For each $f \in \VF^{\ac}_\vta$, let $\Sigma_\vta(f) : \bar f^{-1}(S_\vta) \fun 1^{\sharp\sharp}$ be the object  of $\VF^{\ac}$ given by $x \efun \sum_i (\bar f(x))_i$. This assignment induces a   $\ggk \VF_*$-module homomorphism  $\Sigma_\vta : \ggk \VF^{\ac}_\vta \fun \ggk \VF^{\ac}$.
\end{defn}

Note that $\Sigma_\vta$ is not a ring homomorphism except when $\ell = 1$.


\begin{rem}\label{multi:ac:int}
For each   $f \in \VF^{\ac}_\vta$, let $\bm U = \bigcup_{r \in \vta_1^\sharp \times (1 / \lambda)^\sharp} \bm U_r  \times r$, where $\int [f^{-1}(a)] = [\bm U_{r}] /  (\bm P - 1)$ for $a \in r^\sharp$; the factor $r$ in $\bm U$ is an object in $\RV[0]$, and is just a bookkeeping device. Let $\ac_1 \oplus \ac^* : \bm U \fun \vta_1^\sharp \times (1 / \lambda)^\sharp$ be the obvious coordinate projection on $\bm U$. Then  $[(\bm U, \ac_1 \oplus \ac^*)]/ (\bm P - 1)$ is an element in $\ggk \RV^{\ac}_{\vta}[*] / (\bm P - 1)$  that only depends on the class $[f] \in \ggk \VF^{\ac}_\vta$ of $f$.  This indeed gives a ring and $\ggk \VF_*$-module isomorphism
\[
\int^{\ac}_\vta : \ggk \VF^{\ac}_\vta \fun \ggk \RV^{\ac}_\vta[*] / (\bm P - 1).
\]
As usual, abbreviate  $\Theta^{\ac}_\vta \circ \bb E^{\ac}_{b,\vta} \circ \int^{\ac}_\vta$ as $\Vol^{\ac}_\vta$ (or $\Vol^{\ac}$ if $\ell = 1$, per our convention).
\end{rem}

\begin{rem}\label{sigma:Galois}
By the argument in Remark~\ref{matchG}, now in terms of $\aut(\puC/ \C)$ and $\aut(\puC/ \C \cup \Q)$, and using the notation there, we may replace $\vta$ with $\eta$ without altering the structure of  the category $\VF^{\ac}_\vta$. In particular, we have the $\ggk \VF_*$-module homomorphisms $\Sigma_\eta$ and $\int^{\ac}_\eta$.
\end{rem}


\begin{nota}\label{int:gen:the}
From here on, fix a sequence $m_2 < N < m_3 < \ldots < m_\ell$ of positive integers and take $\vta_i = m_i / m_\ell$; note that $N$ is $m_1$, but its role is somewhat different and hence is denoted differently. We assume $\gcd(m_i, m_j) = 1$ for all $1 < i < j \leq \ell$; this is not really necessary, but does simplify the discussion a bit, for instance, the various values of $\lambda$ that will appear below are just $m_3, \ldots, m_\ell$.

Let $\psi \oplus \phi : X \fun \vta_{1}^{\sharp\sharp} \times (1/ m_\ell)^{\sharp\sharp}$ be an object of $\VF^{\ac}_\vta$. For $2 \leq \imath \leq \jmath \leq \ell$, let
\[
\phi_{(\imath, \jmath)} = \sum_{\imath  \leq i \leq \jmath} \phi^{m_i} : X \fun \vta_\imath^{\sharp\sharp} \dand \vta_{[\imath, \jmath)} = (\vta_{\imath}, \vta_{\imath}, \vta_{\imath+1}, \ldots, \vta_{\jmath}).
\]
Also let $\phi_{(2, 1)}$ be the zero function.

For any function $f : X \fun \VF^n$ and each $\gamma \in \Q^n$, denote the  set $f^{-1}(\gamma^{\sharp\sharp})$ by $\mdl X^\sharp_{f,\gamma}$; as usual, if $n=1$ and $\gamma = 1$ then the latter may be dropped from the notation. For any  set $A \sub X$, the restriction $f \rest (\mdl X^\sharp_{f, \gamma} \cap A)$ is just denoted by $\mdl X^\sharp_{f, \gamma} \cap A$.
\end{nota}

\begin{lem}\label{chop:lam}
For all $1 \leq \imath < \jmath \leq \ell$,  $\mdl X^\sharp_{\psi + \phi_{(2, \imath)}, \vta_{\imath+1}} \oplus \phi$ is an object of $\VF^{\ac}_{\vta_{[\imath+1, \jmath)}}$. If $\imath > 1$ then it is surjective on $\vta_{\imath+1}^{\sharp\sharp} \times (1/ m_\ell)^{\sharp\sharp}$ and $\int (\mdl X^\sharp_{\psi + \phi_{(2, \imath)}, \vta_{\imath+1}} \oplus \phi)^{-1}(a)$ only depends on $\rv(a_2) \in (1/ m_\ell)^{\sharp}$.
\end{lem}
\begin{proof}
The case $\imath = 1$ is assumed. For $\imath > 1$, we have, for $a \in \vta_{2}^{\sharp\sharp} \times (1/ m_\ell)^{\sharp\sharp}$, $\vv  (a_1 + a_2^{m_2}) = \vta_3$ if and only if  $a_1 \in - a_2^{m_2} + \vta_3^{\sharp\sharp} \sub \rv(- a^{m_2}_2)^\sharp \sub \vta_2^{\sharp\sharp}$. So, by the case $\imath = 1$ and the two conditions in Definition~\ref{defn:multi:ac:int}, the claim holds for $\imath = 2$. Iterating this argument,  the lemma follows.
\end{proof}

%

\begin{lem}\label{open:dia:int}
For $2 \leq \imath \leq \ell$ and $\vta_{\imath} \leq \eta < \vta_{\imath+1}$, we have the following equalities:
\begin{equation}\label{null:diag}
\int^{\ac}_\eta [\mdl X^\sharp_{\psi + \phi_{(2, \imath)}, \eta}] =
\begin{cases*}
(\dot \Pi_{\vta_{[2,\imath)}} \circ \int^{\ac}_{\vta_{[2,\imath)}}) ([\psi \oplus \phi]) & if $\vta_{\imath} = \eta$,\\
(\ddot \Pi_{\vta_{[2,\imath)}} \circ \int^{\ac}_{\vta_{[2,\imath)}}) ([\psi \oplus \phi]) & if $\vta_{\imath} < \eta < \vta_{\imath+1}$.
\end{cases*}
\end{equation}
\end{lem}
\begin{proof}
For each $b \in \vta_{2}^{\sharp\sharp}$ with $\rv(b) = r$, denote $\rv(\bigcup_{a \in \vta_{2}^{\sharp\sharp} \mi r^\sharp} a \times \sqrt[m_2]{b-a}) \sub \vta_{2}^{\sharp} \times (1/ m_\ell)^{\sharp}$ by $\dot W_r$, where we write $\dot W_r$ because it only depends on $r$.
For every $a \in \vta_{2}^{\sharp\sharp} \times (1/ m_\ell)^{\sharp\sharp}$ with $\rv(a) = s$, write $\int [(\psi \oplus \phi)^{-1}(a)] = [\bm U_{s}] / (\bm P - 1)$. Then $\int [(\mdl X^\sharp_{\psi + \phi_{(2, 2)}, \vta_2})^{-1}(b)] = [\bigcup_{s \in \dot W_r} \bm U_{s} \times (s, \pr_{\vta_2})] / (\bm P - 1)$, where $(s, \pr_{\vta_2})$ belongs to $\RV[1]$. Upon examination of the construction of $\dot \Pi_{\vta_{[2,2)}}$ in Definition~\ref{conv:RV}, we see that $\int^{\ac}_{\vta_2} [\mdl X^\sharp_{\psi + \phi_{(2, 2)}, \vta_2}]$  indeed works out at $(\dot \Pi_{\vta_{[2,2)}} \circ \int^{\ac}_{\vta_{[2,2)}}) ([\psi \oplus \phi])$.

Let  $\ddot W = \bigcup_{r \in \vta_{2}^{\sharp}} r \times \sqrt[m_2]{-r} \sub \vta_{2}^{\sharp} \times (1/ m_\ell)^{\sharp}$. If $\vta_{2} < \eta < \vta_{3}$  and $b \in \eta^{\sharp\sharp}$ then $\int [(\mdl X^\sharp_{\psi + \phi_{(2, 2)}, \eta})^{-1}(b)] = [\bigcup_{s \in \ddot W} \bm U_{s} \times (s, \pr_{\vta_2})] / (\bm P - 1)$. So $\int^{\ac}_\eta [\mdl X^\sharp_{\psi + \phi_{(2, 2)}, \eta}] = (\ddot \Pi_{\vta_{[2,2)}} \circ \int^{\ac}_{\vta_{[2,2)}}) ([\psi \oplus \phi])$.

Assume $\imath > 2$. Let $\psi' = \mdl X^\sharp_{\psi + \phi_{(2, 2)}, \vta_3}$. So $\mdl X^\sharp_{\psi + \phi_{(2, \imath)}, \eta} = \mdl X^\sharp_{\psi' + \phi_{(3, \imath)}, \eta}$. If $\vta_{\imath} = \eta$ then
\begin{align*}
\textstyle \int^{\ac}_\eta [\mdl X^\sharp_{\psi + \phi_{(2, \imath)}, \eta}] & = \textstyle \int^{\ac}_\eta [\mdl X^\sharp_{\psi' + \phi_{(3, \imath)}, \eta}]  \\
    & = \textstyle (\dot \Pi_{\vta_{[3,\imath)}} \circ \int^{\ac}_{\vta_{[3,\imath)}})([\psi' \oplus \phi]) \\
    & = \textstyle (\dot \Pi_{\vta_{[3,\imath)}} \circ  \Pi_{\vta_{[2,\imath)}}^{\vta_{[3,\imath)}} \circ \int^{\ac}_{\vta_{[2,\imath)}})([\psi \oplus \phi]) \\
    &= \textstyle (\dot \Pi_{\vta_{[2,\imath)}} \circ \int^{\ac}_{\vta_{[2,\imath)}})([\psi \oplus \phi]),
\end{align*}
where the second line is by an obvious inductive hypothesis and the third line follows from Lemma~\ref{chop:lam};  similarly if $\vta_{\imath} < \eta < \vta_{\imath+2}$.
\end{proof}

Taking $\eta = 1$, we have shown that the left square of (\ref{convol:comm}) commutes; similarly for $\ddot \Pi_{(\vta, \eta)}$ and $\Sigma_{(\vta, \eta)}$ if $\eta = m / m_\ell > 1$ for some $m \in \Z^+$ (see Remark~\ref{sigma:Galois}).

\begin{nota}\label{decom:mil}
Let $f, g : X \fun \MM$ be functions that are definable over $\bb S = \C$, for instance, polynomials in $\C[x]$ with $X = X(\MM)$ for a smooth connected variety $X$. Write $f_{(\imath, \jmath)}$ as in Notation~\ref{int:gen:the}.

For each $2 \leq \imath \leq \ell$, we identify three  pairwise disjoint subsets of $\{\vv \circ (g^N + f_{(2, \imath)}) = 1 \}$:
\begin{gather*}
  \mdl X^{-}_{g^N + f_{(2, \imath)}}=  \{\vv \circ (g^N + f_{(2, \imath - 1)}) < 1 < \vv \circ f^{m_{\imath+1}}\}, \\
  \mdl X^{\bullet}_{g^N + f_{(2, \imath)}} =  \{\vv \circ (g^N + f_{(2, \imath - 1)}) = \vv \circ f^{m_\imath} = 1 \}, \\
  \mdl X^{+}_{g^N + f_{(2, \imath)}} = \{\vv \circ (g^N + f_{(2, \imath - 1)}) > 1 \}.
\end{gather*}
Note that $\vv \circ (g^N + f_{(2, \imath - 1)}) = \vv \circ f^{m_{i}}$ for every $2 \leq i < \imath$ is implied in all three conditions and for $\imath$ also in the first one,  and $\vv \circ f^{m_\imath} =  1$ is implied in the third one.

Write $Z_f = f^{-1}(0)$ and $\mdl Z_{f} =  \{\vv \circ f > 1 \}$, similarly for other functions into $\MM$. Since  $\rv \circ f_{(\imath, \jmath)} = \rv \circ f^{m_{\imath}}$ for all $2 \leq \imath \leq \jmath \leq \ell$, we have
\begin{equation}\label{pos:tail:cut}
Z_{f_{(\imath, \jmath)}} = Z_f, \quad \mdl Z_{f_{(\imath, \jmath)}} = \mdl Z_{f^{m_{\imath}}}, \quad \mdl X^{+}_{g^N + f_{(2, \imath)}} = \mdl X^\sharp_{f_{(\imath, \jmath)}} \cap \mdl Z_{g^N + f_{(2, \imath - 1)}}.
\end{equation}
Actually, for any $r \in (\gamma / m_{\imath})^{\sharp}$, $f_{(\imath, \jmath)}$ restricts to a bijection $r^\sharp \fun (r^{m_{\imath}})^\sharp$ (it is surjective because no proper nonempty subset of $(r^{m_{\imath}})^\sharp$ is $r$-definable, and injectivity is a consequence of Hensel's lemma), and hence, for any definable set $A \sub X$ and any $a, b \in (r^{m_{\imath}})^\sharp$,
\begin{equation}\label{fell:trunc}
\int [(\mdl X^\sharp_{f_{(\imath, \jmath)}, \gamma} \cap A)^{-1}(a)] = \int [(\mdl X^\sharp_{f^{m_{\imath}}, \gamma} \cap A)^{-1}(b)].
\end{equation}

The set $\mdl X^\sharp_{g^N + f_{(2, \ell)}}$ is decomposed into five parts:
\begin{equation}\label{TS:setlevel}
\begin{multlined}
\mdl X^\sharp_{g^N + f_{(2, \ell)}} = (\mdl X^\sharp_{g^N} \cap  \mdl Z_{f_{(2, \ell)}})  \cup (\mdl X^\sharp_{f_{(2, \ell)}} \cap \mdl{Z}_{g^N}) \cup \phantom{xxxxxxxxxxxxxxxxxxxx}\\
 \bigcup_{2 < \imath \leq \ell} \mdl X^{+}_{g^N + f_{(2, \imath)}} \cup \bigcup_{2 \leq \imath \leq \ell} (\mdl X^{\bullet}_{g^N + f_{(2, \imath)}} \cup \mdl X^-_{g^N + f_{(2, \imath)}}),
\end{multlined}
\end{equation}
corresponding to the five terms encoded by the combinatorial data in Figure~\ref{TS:fig}, in the same order as presented thereabout. The restriction of $g^N + f_{(2, \ell)}$ to each part is a definable function onto $1^{\sharp\sharp}$ and hence is an object of $\VF^{\ac}$; to curb excess of notation,  these functions and other similar ones below shall just be denoted by their respective domains.

Write $\Vol^{\ac}([\mdl X^\sharp_{f} \cap A])$ as $\mathscr S_{f}^\sharp([A])$, and simply $\mathscr S_{f}^\sharp$ if $A = X$.
\end{nota}

Thus, computing $\mathscr S_{g^N + f_{(2, \ell)}}^\sharp$ boils down to computing the $\Vol^{\ac}$-values of the five terms on the right-hand side of (\ref{TS:setlevel}). The computation of the last two terms makes use of Lemma~\ref{open:dia:int}, details are given in the next subsection. For the first three terms, this can be done together:


\begin{lem}\label{Xfg:gen}
Let $\phi, \psi : X \fun \MM$ be definable functions. If $M \in \Z^+$ is  sufficiently large then
\[
\mathscr S_{\phi^M}^\sharp([\mdl Z_{\psi}]) = \mathscr S_{\phi^M}^\sharp([Z_{\psi}]) \dand \mathscr S_{\psi}^\sharp([\mdl Z_{\phi^M}]) = \mathscr S_{\psi}^\sharp.
\]
\end{lem}
\begin{proof}
We begin by considering $\phi$ instead of $\phi^M$. For $\gamma, \beta \in \Q^+$, denote by $\mdl X^\sharp_{\phi, \gamma, \psi, \beta}$ the restriction of $\phi$ to the set $\mdl X^\sharp_{\phi, \gamma} \cap \mdl X^\sharp_{\psi, \beta}$, which is a definable function onto $\gamma^{\sharp\sharp}$, and write
\[
\int^{\ac}_\gamma [\mdl X^\sharp_{\phi, \gamma, \psi, \beta}] = [\bm W_{\gamma, \beta}] /  (\bm P - 1).
\]
By \cite[\S~5]{HL:modified},  there is a $(\gamma, \beta)$-definable finite partition $(D_{\gamma, \beta, i})_i$ of $\vrv(\bm W_{\gamma, \beta})$ such that, for  each $i$, the set $\bm W_{\gamma, \beta} \cap D_{\gamma, \beta, i}^\sharp$ is a bipolar twistoid. By compactness, there is a $\gamma$-definable finite partition $(E_{\gamma, j})_j$ of $\Q^+$ such that, over each piece $E_{\gamma, j}$, the partitions  $(D_{\gamma,\beta, i})_i$ may be achieved uniformly and, for each $i$, the corresponding twistbacks are the same. So each class
$\int^{\ac}_\gamma [ \bigcup_{\beta \in E_{\gamma, j}} \mdl X^\sharp_{\phi, \gamma, \psi, \beta} ]$
is indeed represented by a finite disjoint union of bipolar twistoids $\bm W_{\gamma,i,j} \in \RV^{\ac}_\gamma[*]$ and each $\vrv(\bm W_{\gamma,i,j})$ is of the form $\bigcup_{\beta \in E_{\gamma, j}} D_{\gamma,\beta, i} \times \beta$, where $\chi_b(D_{\gamma,\beta, i}) \in \Z$ is constant over $E_{\gamma, j}$.

Working over $\bb S = \C$, by compactness, these partitions $(E_{\gamma,j})_j$ may be achieved uniformly over $\gamma \in \Q^+$; in other words, they form a definable finite partition of $(\Q^+)^2$ whose pieces are cones based at the origin.  This implies that there are a $j$ and a $p \in \Q^+$ such that $(p\gamma, \infty) \sub E_{\gamma, j}$ for all  $\gamma \in \Q^+$.  Since $M$ is sufficiently large, we have $(1, \infty) \sub (p / M, \infty) \sub E_{1/M, j}$. Therefore, the class
$\int^{\ac}_{1/M} [ \bigcup_{\beta \in (1, \infty)} \mdl X^\sharp_{\phi,  1/M, \psi, \beta} ]$
is  represented by a finite disjoint union of bipolar twistoids $\bm W_{1/M, i} \in \RV^{\ac}_{1/M}[*]$ such that each $\vrv(\bm W_{1/M,i})$ may be written in the form $\bigcup_{\beta \in (1, \infty)} D_{1/M, \beta, i} \times \beta$, where $\chi_b(D_{1/M, \beta, i}) \in \Z$ is constant over $\beta \in (1, \infty)$. Observe that the class
$\int^{\ac} [ \bigcup_{\beta \in (1, \infty)} \mdl X^\sharp_{\phi^M, \psi, \beta} ]$
must admit a representative of this form as well, which then is annihilated by $\bb E^{\ac}_{b}$ because $\chi_b((1, \infty)) = 0$. This leaves only $Z_{\psi}$ in the computation. The first equality follows.

For the second equality, since  the roles of $\phi^M$, $\psi$ are not exactly symmetric, a slightly different argument is needed. Let the restrictions $\mdl X^\sharp_{\psi, \gamma, \phi, \beta}$ of $\psi$ and the partitions $(D_{\gamma,\beta, i})_i$, $(E_{\gamma, j})_j$ be as above. In fact, we  only need the case $\gamma = 1$ and hence can write $\mdl X^\sharp_{\psi, \phi, \beta}$, $D_{\beta, i}$, $E_{j}$ instead.  Since $M$ is sufficiently large, $(0, 1/M] \sub E_j$ for some $j$. Then the class $\int^{\ac} [\bigcup_{\beta \in (0, 1/M]} \mdl X^\sharp_{\psi, \phi, \beta}]$ is represented by a finite disjoint union of bipolar twistoids similar to $\bm W_{1/M, i}$, hence so is the class $\int^{\ac} [\bigcup_{\beta \in (0, 1]} \mdl X^\sharp_{\psi, \phi^M, \beta}]$. This is again annihilated by $\bb E^{\ac}_{b}$ because $\chi_b((0,1]) = 0$. Since $\mdl X^\sharp_{\psi}$ is the union of $\mdl X^\sharp_{\psi} \cap \mdl Z_{\phi^M}$ and $\bigcup_{\beta \in (0,1]} \mdl X^\sharp_{\psi, \phi^M, \beta}$, the lemma follows.
\end{proof}

\begin{rem}\label{fub:why:xb}
It is not essential to use the bounded Euler characteristic $\chi_b$ for the second equality as the interval $(0, 1]$ vanishes under both, but $\chi_b$ is needed for the first equality.
\end{rem}

\begin{hyp}\label{hyp:seg:inc}
Henceforth we assume that each $m_i$ is sufficiently large relative to the data in question that involve only the  numbers before it. This condition is needed and will become clear  whenever Remark~\ref{int:to:resac:2} is invoked (implicitly).
\end{hyp}

\begin{cor}
Substituting suitable functions from Notation~\ref{decom:mil} for $\phi$, $\psi$ in Lemma~\ref{Xfg:gen}, we obtain the following equalities:
\begin{itemize}[leftmargin=*]
  \item $\mathscr S_{g^N}^\sharp([\mdl Z_{f_{(2,\ell)}}]) = \mathscr S_{g^N}^\sharp([\mdl Z_{f^{m_{2}}}]) = \mathscr S_{g^N}^\sharp([Z_{f^{m_{2}}}]) = \mathscr S_{g^N}^\sharp([Z_{f}])$,
  \item for $2 < \imath \leq \ell$, $\Vol^{\ac}([\mdl X^{+}_{g^N + f_{(2, \imath)}}]) = \mathscr S^\sharp_{f^{m_{\imath}}}([Z_{g^N + f_{(2, \imath - 1)}}])$,
  \item $\mathscr S_{f_{(2, \ell)}}^\sharp ([\mdl Z_{g^N}]) = \mathscr S_{f^{m_{2}}}^\sharp ([\mdl Z_{g^N}]) =  \mathscr S_{f^{m_2}}^\sharp$.
\end{itemize}
\end{cor}
\begin{proof}
The second  equality needs additional explanation. By (\ref{fell:trunc}) and (\ref{pos:tail:cut}),
\[
\Vol^{\ac}([\mdl X^{+}_{g^N + f_{(2, \imath)}}]) = \mathscr S^\sharp_{f_{(\imath, \ell)}}([\mdl Z_{g^N + f_{(2, \imath - 1)}}]) = \mathscr S^\sharp_{f^{m_{\imath}}}([\mdl Z_{g^N + f_{(2, \imath - 1)}}]).
\]
So the first equality of Lemma~\ref{Xfg:gen} may be applied with $\phi = f$, $M = m_{\imath}$, and $\psi = g^N + f_{(2, \imath - 1)}$.
\end{proof}

\subsection{A local Thom-Sebastiani formula}
For $2 \leq i \leq \imath \leq \ell$ and $\alpha \in \Q^+$, let $\vta^\imath_i = m_i / m_\imath$ and  $\vta^{(\imath)}_{\alpha} = (\alpha \vta^\imath_2, \alpha \vta^\imath_i)_{2 \leq i \leq \imath}$. Let $g^N_{\imath, \alpha} = \mdl X^\sharp_{g^N, \alpha \vta^\imath_2}$ and $f_{\imath, \alpha} = \mdl X^\sharp_{f, \alpha /m_\imath}$. Then $g^N_{\imath, \alpha} \oplus f_{\imath, \alpha} \in \VF^{\ac}_{\vta^{(\imath)}_{\alpha}}$. If $\alpha = 1$ then it is dropped from the notation. Write $\Vol^{\ac}_{\vta^{(\imath)}_\alpha}([g^N_{\imath, \alpha} \oplus f_{\imath, \alpha}])$ as $\mathscr S^\sharp_{g^N_{\imath, \alpha} \oplus f_{\imath, \alpha}}$.

Let $L'_{\imath}$ be the open interval $(m_\imath/m_{\imath+1}, 1)\sub \Q^+$. If $a \in \mdl X^{\bullet}_{g^N + f_{(2, \imath)}}$ then $\vv ( f(a)) = 1/ m_\imath$ and if $a \in \mdl X^-_{g^N + f_{(2, \imath)}}$ then $\vv ( f(a)) \in L'_{\imath}/ m_\imath$. For each $\alpha \in L'_{\imath}$, let $\mdl X^{-, \alpha}_{g^N + f_{(2, \imath)}}$ be the restriction of $\mdl X^-_{g^N + f_{(2, \imath)}}$ determined by the condition $\vv ( f(a)) = \alpha / m_\imath$.

Since (\ref{convol:comm}) commutes, taking $\eta = 1$ and $\psi \oplus \phi = g^N_{\imath} \oplus f_{\imath}$  in Lemma~\ref{open:dia:int}, we obtain
\begin{equation*}
\Vol^{\ac}([\mdl X^{\bullet}_{g^N + f_{(2, \imath)}}]) = (\Vol^{\ac} \circ \Sigma_{\vta^{(\imath)}})([g^N_{\imath} \oplus f_{\imath}]) =  \dot \Psi_{\vta^{(\imath)}} (\mathscr S^\sharp_{g^N_{\imath} \oplus f_{\imath}})
\end{equation*}
and taking $\eta = 1$ and $\psi \oplus \phi = g^N_{\imath, \alpha} \oplus f_{\imath, \alpha}$ for $\alpha \in L'_{\imath}$ in Lemma~\ref{open:dia:int}, we obtain
\begin{equation*}
\Vol^{\ac}([\mdl X^{-, \alpha}_{g^N + f_{(2, \imath)}}]) = \ddot \Psi_{\vta^{(\imath)}_\alpha} (\mathscr S^\sharp_{g^N_{\imath, \alpha} \oplus f_{\imath, \alpha}}).
\end{equation*}
By Remark~\ref{sigma:Galois}, the right-hand side of this second equality is actually the same for any  $\alpha \in \Q^+$ and hence, in particular,  may be written as $\ddot \Psi_{\vta^{(\imath)}} (\mathscr S^\sharp_{g^N_{\imath} \oplus f_{\imath}})$. From another perspective, if we write $\int^{\ac} [\mdl X^{-, \alpha}_{g^N + f_{(2, \imath)}}] = [\bm U_{\alpha}] / (\bm P - 1)$ then there is an $\alpha$-definable partition of $\vv(\bm U_{\alpha})$ of the form $(D_{k\alpha} \times \alpha)_k$, uniform over $\Q^+$, such that each $\bm U_{\alpha} \cap (D_{k\alpha} \times \alpha)^\sharp$ is a bipolar twistoid. Thus,  $\int^{\ac} [\mdl X^{-}_{g^N + f_{(2, \imath)}}]$ is represented by a finite disjoint union of bipolar twistoids $\bm W_k \in \RV[*]$ with $\vrv(\bm W_k) = \bigcup_{\alpha \in L'_{\imath}} D_{k\alpha} \times \alpha$. Since $\chi_b(D_{k\alpha})$ is constant over $L'_{\imath}$ for every $k$, it follows that
\begin{equation}\label{spec:dia:int}
\Vol^{\ac}([\mdl X^{-}_{g^N + f_{(2, \imath)}}]) = \chi_b(L'_{\imath}) \ddot \Psi_{\vta^{(\imath)}} (\mathscr S^\sharp_{g^N_{\imath} \oplus f_{\imath}}) = - \ddot \Psi_{\vta^{(\imath)}} (\mathscr S^\sharp_{g^N_{\imath} \oplus f_{\imath}}).
\end{equation}
So the minus sign on $\ddot \Psi_{\vta}$ in the definition of $\Psi_{\vta}$ may now be interpreted as the Euler characteristic of a bounded open interval.


\begin{thm}\label{TS:main}
In conclusion, we have derived a local Thom-Sebastiani formula in $\ggk^{\bm 1} \Var_{\C}$:
\[
\mathscr S_{g^N + f_{(2,\ell)}}^\sharp = \mathscr S_{g^N}^\sharp([Z_{f}]) + \mathscr S_{f^{m_{2}}}^\sharp + \sum_{2 < \imath \leq \ell} \mathscr S^\sharp_{f^{m_{\imath}}}([Z_{g^N + f_{(2,\imath-1)}}]) - \sum_{2 \leq \imath \leq \ell} \Psi_{\vta^{(\imath)}} (\mathscr S^\sharp_{g^N_{\imath} \oplus f_{\imath}}).
\]
\end{thm}

The special case $\ell= 2$ and $m_2 = 1$ is related to the local Thom-Sebastiani formula in \cite[Corollary~5.16]{guibert2006} as follows. In terms of motivic Milnor fibers instead of motivic vanishing cycles, this latter formula may be written as
\begin{equation}\label{GLM:form:mil}
\mathscr S_{f, z} - \mathscr S_{g^N + f, z} = \Psi_\Sigma(\mathscr S_{g^N, z}(\mathscr  S_{f})) - \mathscr S_{g^N, z}([f^{-1}(0)]).
\end{equation}
Here $z \in f^{-1}(0)$ is a $\C$-rational point, which is implicit in Theorem~\ref{TS:main} (recall the simplification made at the beginning of this section).  The (local) motivic Milnor fibers $\mathscr S_{f, z}$ and $\mathscr S_{g^N + f, z}$ are constructed via motivic zeta functions with coefficients in $\mathscr{M}_{\G_m}^{\G_m}$; see \cite[\S~3.6]{guibert2006} for details. The meaning of the term $\mathscr S_{g^N, z}([f^{-1}(0)])$ is established in \cite[Theorem~3.9]{guibert2006}, and it belongs to $\mathscr{M}_{\G_m}^{\G_m}$. According to the nearby cycles formalism of \cite[\S~4.6]{guibert2006}, $\mathscr S_{g^N, z}(\mathscr  S_{f})$ belongs to $\mathscr{M}_{\G_m^2}^{\G^2_m}$. But then, after applying the operator $\Psi_\Sigma$ as defined in \cite[\S~5.1]{guibert2006}, it comes down to $\mathscr{M}_{\G_m}^{\G_m}$ as well. In a nutshell, the expression (\ref{GLM:form:mil}) is well-typed.

There is an isomorphism $\Upsilon: \mathscr{M}_{\G_m}^{\G_m} \fun \mathscr{M}^{\hat \mu}$, which is just (\ref{fib:at:1}) localized at $[\A]$. It can be checked that a similar construction via ``taking the fiber at $\rcsn(1)$'' also yields an isomorphism $\sggk \RES^{\ac} \fun \sggk^{\hat \mu} \RES$, which shall also be denoted by $\Upsilon$, and indeed $\Theta^{\hat \mu} \circ \Upsilon = \Upsilon \circ \Theta^{\ac}$. Consequently, by \cite[Remark~3.13]{guibert2006} and the complex version of Theorem~\ref{direct:mil} (see \cite[Theorem~8.11]{HL:modified}), we have
\begin{equation}\label{mil:fib:coin}
\mathscr S_{g^N + f}^\sharp = \mathscr S_{g^N + f, z}, \quad \mathscr S_{f}^\sharp = \mathscr S_{f, z}, \quad \mathscr S_{g^N}^\sharp([Z_{f}]) = \mathscr S_{g^N, z}([f^{-1}(0)]).
\end{equation}
This implies that, for any sufficiently large $N \in \Z^+$,
\begin{equation}\label{relate:conv}
  \Psi_{\bm 2} (\mathscr S^\sharp_{g^N \oplus f}) = \Psi_\Sigma(\mathscr S_{g^N, z}(\mathscr  S_{f})).
\end{equation}
The methodology of \cite{guibert2006} offers a geometric interpretation of ``sufficiently large $N \in \Z^+$'' in terms of log-resolutions. Our interpretation lies in the proof of Lemma~\ref{Xfg:gen} and Remark~\ref{int:to:resac:2}, and is not as informative since it depends on compactness. It is not clear how to relate the two thresholds. Also note that the left-hand side of (\ref{relate:conv}) is obviously commutative in the sense that  $\mathscr S^\sharp_{g^N \oplus f} = \mathscr S^\sharp_{f \oplus g^N}$, and perhaps this can be translated into an expression on the right-hand side through a resolution-based analysis of the motivic zeta functions involved.

The setup for the motivic Thom-Sebastiani formula in \cite{DL:tom:seb} involves a morphism $f'$ on another smooth variety $X'$ and the obvious morphism $f + f'$ on the product $Y = X \times X'$. This formula is a special case of \cite[Corollary~5.16]{guibert2006}, as demonstrated in \cite[Theorem~5.18]{guibert2006}, and hence can be recovered from Theorem~\ref{TS:main} as well, although we do need to check that it holds for $N = 1$ in that situation. Anyway, we can give a more direct proof. To begin with, write (\ref{TS:setlevel}) as
\begin{equation*}
\mdl Y^\sharp_{f + f'} = (\mdl X^\sharp_{f} \times \mdl Z_{f'}) \cup (\mdl X^\sharp_{f'} \times \mdl Z_{f})  \cup  \mdl Y^+_{f + f'}  \cup  \mdl Y^-_{f + f'}.
\end{equation*}
Observe that the conclusion of Remark~\ref{int:to:resac:2} already holds for the function  $f \oplus f'$ on $Y(\MM) = X(\MM) \times X'(\MM)$ and indeed
\[
\Vol^{\ac}([\mdl Y^+_{f + f'}  \cup  \mdl Y^-_{f + f'}]) = - \Psi_{\bm 2} (\mathscr S_{f \oplus f'}^\sharp) = - \mathscr S_f^\sharp * \mathscr S_{f'}^\sharp.
\]
To compute the other two terms, now symmetric, the key is the following equality.

\begin{lem}\label{Zf0}
$(\bb E_b \circ \int) ([Z_f]) = 1$.
\end{lem}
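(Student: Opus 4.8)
The plan is to peel the point $z=0$ off of $Z_f$. Since $f(z)=0$ we have $z\in Z_f$, so in $\VF_*$ we may write $[Z_f]=[\{z\}]+[Z_f\mi z]$, and $(\bb E_b\circ\int)([\{z\}])=1$ because the integral of a singleton in $\VF$ is the class of a point of $\RV[0]$, on which $\bb E_b$ is $1$ (as in the identity $\bb E_b([1])=1$ of Remark~\ref{eb:eg:tar:ring}). Thus the lemma is equivalent to the vanishing $(\bb E_b\circ\int)([Z_f\mi z])=0$. If $f^{-1}(0)$ meets $X$ near $z$ only in the isolated point $z$, then $Z_f=\{z\}$ and there is nothing left to prove, so we may assume $\dim_{\VF}(Z_f)\ge 1$ and $Z_f\mi z\ne\0$.

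To prove the vanishing, I would use the definable ``valuative distance to $z$'' function $\delta:Z_f\mi z\fun\bm H$, $x\efun\min_i\vv(x_i)$; it takes values in the half-interval $\bm H=(0,\infty)$ (finite because $x\ne z$, positive because $x\in\MM^n$) and is surjective onto $\bm H$ by curve selection applied to $Z_f$ near $z$. The point is that $\delta$ lets one run the Hrushovski--Kazhdan computation of $\int[Z_f\mi z]$ through a cell decomposition compatible with $\delta$, so that, after passing to the tensor decomposition $\ggk\RV[*]\cong\ggk\RES[*]\otimes\ggk\Gamma[*]$, one gets $\int[Z_f\mi z]=\sum_j\Psi([\bm V_j]\otimes[I_j])/(\bm P-1)$ in which $\delta$ becomes a $\Z$-linear coordinate on each $I_j\sub\Gamma^{k_j}$ (Remark~\ref{GLZ:char}) and each $I_j$ is a cone based at the origin of $\Gamma^{k_j}$, that is, carries a free half-line in the $\delta$-direction. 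Since the bounded Euler characteristic of any such cone vanishes ($\chi_b$ of a product of a half-line with a cell is $\chi_b(\bm H)$ times something, and $\chi_b(\bm H)=0$), the formula for $\bb E_b$ in Proposition~\ref{base:Eb:Eg} gives $\bb E_b(\Psi([\bm V_j]\otimes[I_j]))=\chi_b(I_j)[\bm V_j][\G_m]^{k_j}[1]^{-(k_j+\dots)}=0$ for every $j$, whence $(\bb E_b\circ\int)([Z_f\mi z])=0$.

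The main obstacle is justifying that the integral of $Z_f\mi z$ admits such a representation with all $\Gamma$-cells conical near $z$; this is where the non-leading part of $f$ has to be controlled. I would handle it by a weighted rescaling: for $d=\mathrm{ord}_z f$ and $s\in\MM\mi 0$ the substitution $x\efun sx$ gives an $(\bb S\la s\ra)$-definable bijection between $Z_f\mi z$ and the punctured tube of $f^{-1}(0)$ at scale $\vv(s)$, and an immediate-automorphism argument (as in Example~\ref{exam:two:poin} and Lemma~\ref{rv:int:bun}) shows this does not change the value of $\int$; letting $\vv(s)\to\infty$ replaces $f$ by its degree-$d$ part, whose zero set is a genuine cone on which $x\efun\lambda x$ is a $\Gamma^+$-action, forcing the cells into conical form. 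An alternative, avoiding the degeneration, is an induction on $\dim_{\VF}(Z_f)$ via the dévissage $Z_f=Z_f^{\,\mathrm{reg}}\sqcup W_{\mathrm{sing}}(\MM)$ with $W=f^{-1}(0)\cap X$: the singular tube $W_{\mathrm{sing}}(\MM)$ has strictly smaller $\VF$-dimension and is handled by the inductive hypothesis, $Z_f^{\,\mathrm{reg}}$ over a smooth point is a polydisc (volume $1$), and over a singular point the same distance-fibration argument applied to the smooth locus gives volume $0$ — the one genuinely geometric input, common to both routes, being precisely that a variety's tube over a point, punctured at that point, has motivic volume zero. This mirrors the proof of \cite[Lemma~13.3]{hrushovski:kazhdan:integration:vf}, extended past the nonsingular case.
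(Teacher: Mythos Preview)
Your overall strategy --- peel off $z$ and show the remainder has $\bb E_b$-volume zero because the $\Gamma$-factors in the tensor decomposition all have $\chi_b = 0$ --- is exactly the paper's, but the paper resolves your ``main obstacle'' in one line rather than by geometry. Since $X$, $f$, $z$ are all defined over $\C$, one may compute over $\bb S = \C$, where $\Gamma(\bb S)$ is trivial and the only definable point of $\Gamma$ is $0$. Then every infinite definable $D_i \sub \Gamma^k$ has, after an innocuous partition, a coordinate projection equal to one of $(0,\infty)$, $(-\infty,0)$, $\Q \mi 0$, each with $\chi_b = 0$. The surviving terms (those with $D_i$ a singleton) are pure $\RES$-classes, and the paper disposes of them via $\K$/$\Gamma$-orthogonality: a definable injection $\bb L\bm U_i \hookrightarrow A \sub \MM^n$ must land where every valuation is $\infty$, i.e.\ at the origin, so if $0 \notin A$ these terms vanish. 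This actually proves the general statement $(\bb E_b \circ \int)([A]) = 1$ if $0 \in A$ and $0$ otherwise, for any $A$ in $\MM$ definable over $\C$; the lemma is the special case $A = Z_f$.

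Both of your proposed resolutions have genuine gaps. The rescaling sketch does not work as written: $x \efun sx$ does not carry $Z_f \mi z$ to a tube of $f^{-1}(0)$ unless $f$ is already homogeneous, and ``letting $\vv(s) \to \infty$'' has no meaning for Grothendieck-ring classes --- there is no limit here to take. (What you are reaching for is that the $\Gamma$-sort admits scaling automorphisms over $\bb S = \C$, which is precisely the paper's observation.) The induction sketch confuses the tube over the single residue point $z$ with a fibration over all of $W = f^{-1}(0)$; once one remembers that everything reduces to $z$, the d\'evissage $Z_f = Z_f^{\mathrm{reg}} \sqcup W_{\mathrm{sing}}(\MM)$ degenerates according to whether $z$ is a smooth or singular point of $W$, and in the singular case your ``volume $1$ here, volume $0$ there'' bookkeeping does not add up. Finally, you never justify why $\int [Z_f \mi z]$ has no pure $\RES$-summand: the existence of the function $\delta$ on $Z_f \mi z$ does not by itself force every term of the tensor decomposition to carry a nontrivial $\Gamma$-factor; that is exactly what the orthogonality step in the paper handles.
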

\begin{proof}
We actually show a more general claim: Over $\bb S = \C$, if $A$ is a definable set in $\MM$ then $(\bb E_b \circ \int) ([A]) = 0$ if $0 \notin A$ and $(\bb E_b \circ \int) ([A]) = 1$ otherwise. This is enough since enlarging the language (new parameters, new function symbols, etc.) will not change these equalities.

Since there is no definable point in $\Gamma \cong \Q$ except $0$, we see that if $(U, f) \in \RES[k]$ then $U$, $f(U)$ are just constructible sets in $\C$.  Let $A$ be a definable set. Then $\int [A]$ may be expressed as a finite sum $\sum_i [\bm U_i] \otimes [D_i]$ modulo $(\bm P - 1)$, where $[\bm U_i] \in \ggk \RES[*]$ and $[D_i] \in \ggk \Gamma[*]$. We may assume that either $[D_i] = 1$ (if $D_i \in \Gamma^{\fin}[*]$ then it may be absorbed into $\bm U_i$) or  $D_i$ is infinite. In the latter case, for some coordinate projection, say $\pr_1$, we may further assume that $\pr_1(D_i)$ is $(-\infty, 0)$ or $(0, \infty)$ or $\Q \mi 0$ and hence, by \omin-minimality, $\bb E_b(D_i) = 0$.

Thus, to compute $(\bb E_b \circ \int) ([A])$, we may write $\int [A]$  as $\sum_i [\bm U_i]$ modulo $(\bm P - 1)$. By Theorem~\ref{main:prop}, there is a definable injection $g : \biguplus_i \bb L \bm U_i \fun A$. By orthogonality (Remark~\ref{pillars}),  $\vv(g(\biguplus_i \bb L \bm U_i))$ is finite and hence only $0$ and $\infty$ can occur in its coordinates; in the case we are interested in, that is, $A \sub \MM^n$ for some $n$, only $\infty$ can occur, but then $A$ must contain the point $0$. So $g(\biguplus_i \bb L \bm U_i)$ is either empty or is the singleton $0$, which means that $\sum_i [\bm U_i]$ is either $0$ or $1$, respectively. Since $(\bb E_b \circ \int) ([A \mi 0])$ also equals $1$ or $0$, we see that $(\bb E_b \circ \int) ([A]) = 1$ if and only if $0 \in A$.
\end{proof}

By the same reasoning that leads to (\ref{spec:dia:int}), the class  $\int [\mdl Z_{f'} \mi Z_{f'}]$ is represented by a finite disjoint union of bipolar twistoids $\bm W_i \in \RV[*]$ such that $\vrv(\bm W_i)$ is of the form $\bigcup_{\gamma \in (1, \infty)} D_{i\gamma} \times \gamma$ and $\chi_b(D_{i\gamma})$ is constant over $(1, \infty)$ for every $i$. So $\Vol^{\hat \mu}([\mdl Z_{f'} \mi Z_{f'}]) = 0$. Then, by Lemmas~\ref{rv:int:bun} and~\ref{Zf0}, for every $r \in 1^\sharp$,
\begin{equation*}\label{dag:Zf}
\Vol^{\hat \mu} ([(\mdl X^\sharp_f \times \mdl Z_{f'})^{-1}(r^\sharp)]) = \Vol^{\hat \mu} ([\mdl X_{f, r}][\mdl Z_{f'} \mi Z_{f'}] + [\mdl X_{f, r}][Z_{f'}]) = \Vol^{\hat \mu}( [\mdl X_{f, r}]).
\end{equation*}
This shows that $\Vol^{\ac}([\mdl X^\sharp_f \times \mdl Z_{f'}]) = \mathscr S_f^\sharp$ and hence
\begin{equation}\label{TS:class}
\mathscr S_{f+f'}^\sharp = \mathscr S_f^\sharp + \mathscr S_{f'}^\sharp - \mathscr S_f^\sharp * \mathscr S_{f'}^\sharp.
\end{equation}

\subsubsection{The real case}\label{sect:TS:real}

If we work in the $\ACVF$-model $\puC$ with $\bb S = \R \cup \Q$ and let the variety $X$, the morphism $f$, etc., be defined over $\R$ then the preceding discussion is still valid. In more detail,
there is a subgroup of $\gal(\C \dpar t/ \R)$ that may be identified with $\gal(\C/\R) \ltimes \C^\times$; its preimage along the canonical surjective homomorphism
\[
\gal(\puC / \R) \fun \gal(\C \dpar t/ \R)
\]
is denoted by $\bm c \hat \tau$, which may  be identified with $\lim_n (\gal(\C/\R) \ltimes \C^\times)_n$. There is an isomorphism
$\sggk^{\bm c \hat \tau} \RES \cong \ggk^{\bm c \hat \tau} \Var_{\R}$ (for surjectivity, combine the arguments in Remarks~\ref{first:theta} and \ref{theta:suj}).

The categories in Definition~\ref{theta:C:cat} and the corresponding Grothendieck groups are now written as $\Var_{\R}^{\vta, n}$ and $\ggk^{\vta, n} \Var_{\R}$. As in Definition~\ref{ct:var:over:R}, for an object $(Y, \pi)$ of $\Var_{\R}^{\vta, n}$, the  $\gal(\C/\R) \ltimes \C^\times$-action on $Y$ and the morphism $\pi : Y  \fun \G_m^\ell$ are required to be compatible with the antiholomorphic involution  in question; in particular, for the generator $\bm c \in \gal(\C/\R)$,  (\ref{good:act}) should read
\begin{equation}\label{good:act:conj}
\pi_1(\bm c \cdot y) = \bm c \cdot \pi_1(y) \dand \pi^*(\bm c \cdot y) = \bm c \cdot \pi^*(y),
\end{equation}
so if $y$ is a real point then $\pi_1(y)$, $\pi^*(y)$ must be real points too. We construct a $\ggk^{\bm c \hat \tau} \Var_{\R}$-module homomorphism
$\Psi_\vta : \ggk^{\vta} \Var_{\R} \fun \ggk^{\bm 1} \Var_{\R}$
as in Definition~\ref{C:cat:conv}. Then Theorem~\ref{TS:main} holds in $\ggk^{\bm 1} \Var_{\R}$ as well.

However, as in \S~\ref{subsec:milnor}, we are more interested in a statement that is indigenous to the real algebraic environment. In addition,  we shall point out how to deduce the real Thom-Sebastiani formula in \cite{Campesato} from ours.

Let $\ggk^{\hat \rho} \RVar$ be the real analogue of $\ggk^{\hat \tau} \Var_{\C}$, that is, the Grothendieck ring of the category of real varieties with weighted $\R^\times$-actions. A morphism $\pi : Y(\R) \fun (\R^\times)^\ell$ on a real variety $Y(\R)$ with an $\R^\times$-action is  $(\vta, n)$-diagonal if the obvious analogue of (\ref{good:act}) holds. The categories $\RVar^{\vta, n}$, $\RVar^{\vta}$, etc., are defined accordingly. The $\ggk^{\hat \rho} \RVar$-module homomorphism $\Psi_\vta$ in the bottom row of (\ref{TS:down:real}) is constructed as in Definition~\ref{C:cat:conv} again.

Given any $n$-weighted $\bm c \hat \tau$-action $\hat h$ on $Y \otimes_{\R}\C$, by considering the induced $\delta_n$-action in each fiber and the orbit size of each real point as in Definition~\ref{mu2:var}, one sees that $\hat h$ gives rise to an $n$-weighted $\R^\times$-action on $Y(\R)$. Consequently, as in (\ref{tauhat:forget}),  taking real points yields $\mdl A_{\C}$-module homomorphisms $\Xi^{\vta}$, $\Xi^{\bm 1}$ in (\ref{TS:down:real}) (also one $\ggk^{\bm c \hat \tau} \Var_{\R} \fun \ggk^{\hat \rho} \RVar$).
\begin{equation}\label{TS:down:real}
\bfig
\hSquares(0,0)/->`->`->`->`->`->`->/<400>[{\ggk^{\vta} \Var_{\R}}`{\ggk^{\bm 1} \Var_{\R}}`{\gdv}`{\ggk^{\vta} \RVar}`{\ggk^{\bm 1} \RVar}`{\gsv}; \Psi_\vta`\Upsilon^1`\Xi^{\vta}`\Xi^{\bm 1}`\Xi`\Psi_\vta`\Upsilon^1]
\efig
\end{equation}

By an inductive argument similar to the one in the proof of Lemma~\ref{RV:RES:conv}, noting also that, by (\ref{good:act:conj}), fibers of $\pi$ over genuinely complex points make no contributions to fibers over real points in (\ref{conv:1:2}) and (\ref{conv:induc}), we deduce that the first square of (\ref{TS:down:real}) commutes. So Theorem~\ref{TS:main} holds in $\ggk^{\bm 1} \RVar$ too,  as a direct specialization of the same equality in $\ggk^{\bm 1} \Var_{\R}$ via $\Xi^{\vta}$ and $\Xi^{\bm 1}$.

This state of affairs may seem somewhat unsatisfactory as the supposedly real formula is in actuality  computed from the complex objects in (\ref{TS:setlevel}) and the volume operators    $\Vol^{\ac}_\vta$ over $\puC$. To remedy this, we can start the specialization procedure earlier, using the technique in \S~\ref{section:spec:hen}, as has been done in Remark~\ref{comp:real:non}, and obtain the same formula using the $\puR$-trace of (\ref{TS:setlevel}) and the corresponding volume operators    over $\puR$. No new perspective lies herein and hence we shall not labor further on it.

\begin{rem} \label{rem-compG}
The second square of (\ref{TS:down:real}) also commutes, where the two horizontal arrows are constructed via taking the fiber at $1$ as  in (\ref{fib:at:1}). However, as another manifestation of the duality of the sign, taking the fiber at $-1$ yields a genuinely different ring homomorphism
\[
\Upsilon^{-1}: \ggk^{\bm 1} \RVar \fun \gsv
\]
Neither $\Upsilon^{1}$ nor $\Upsilon^{-1}$ is injective, not even taken as a pair (for instance any even power function on the torus gives the same class).
\end{rem}

Now, the said formula in \cite{Campesato} is formulated in a specialization $\mathscr{M}_{\mathcal{AS}}$ of $\ggk^{\bm 1} \RVar[[\A]^{-1}]$, which is constructed using arc-symmetric (semialgebraic) sets and maps. In more detail, adapting the method of \cite{guibert2006}, the (generalized) real motivic Milnor fiber $\mathscr S_f^\times$ of $f$ is the limit of a motivic zeta function $Z^\times(T)$ whose coefficients are given by sets of  truncated arcs of the form
\[
\set{ \varphi \in X(\R[t] / t^{m+1}) \given f(\varphi) = a t^m \mod t^{m+1} \text{ with } a \in \R^\times \tand  \varphi(0) = z }
\]
together with the built-in angular component map sending $\varphi$ to $a$. Then an equality similar to the special case (\ref{TS:class}) may be established in $\mathscr{M}_{\mathcal{AS}}$; see \cite[Corollary~6.20]{Campesato}. Here we point out that the process of ``taking the limit'' forces the $\R^\times$-actions on the coefficients of $Z^\times(T)$ to factor through a $\R^+$-action and, consequently, the negative part of $\R^\times$ does not really figure in $\mathscr S_f^\times$; this is but another manifestation of what has been said in Remark~\ref{mu2:failure} about the construction in \cite{Fseries}.

Let us rather consider the same construction at the level of $\ggk^{\bm 1} \RVar$ (hence finer, since full $\R^\times$-actions are retained). In order to show that \cite[Corollary~6.20]{Campesato} can  be obtained from the specialization of (\ref{TS:class}) to $\ggk^{\bm 1} \RVar$, one needs to check that $\mathscr S_f^\times$ can indeed be recovered as $\Vol^{\ac}(\mdl X^\sharp_f)$ over $\puR$, similar to (\ref{mil:fib:coin}). We may try to reproduce the argument given there. To begin with, taking the fiber at $1$ coefficientwise, we recover from $Z^\times(T)$  the motivic zeta function $Z^1(T)$ in (\ref{real:zeta}) (taking the fiber at $-1$ gives its negative counterpart $Z^{-1}(T)$), and it is straightforward to check that this operation commutes with the operator ``$ - \lim_{T \limplies \infty}$'' in (\ref{pos:mil:no:act}); actually this is just an analogue of  \cite[Remark~3.13]{guibert2006}, which we have also gone through in \S~\ref{subsec:milnor}. However, this is as far as we can go since, unlike $\Upsilon$ in  (\ref{fib:at:1}), $\Upsilon^1$ is not an isomorphism. In other words, although we know that the images of $\mathscr S_f^\times$, $\Vol^{\ac}(\mdl X^\sharp_f)$ under $\Upsilon^1$ coincide in $\gsv$, we cannot conclude that they themselves coincide in  $\ggk^{\bm 1} \RVar$.

Thus the apparent shortcut is blocked in the real environment, and we shall have to revert back to the zeta function point of view, that is, we need to show a version of Theorem~\ref{direct:mil}  with respect to $Z^\times(T)$ and $\mdl X^\sharp_f(\puR)$. Although some extra care is needed  concerning the use of the integral $\int^\diamond$, there is no new insight arising in this endeavor and, as above, we choose not to labor further on technicalities.

\providecommand{\bysame}{\leavevmode\hbox to3em{\hrulefill}\thinspace}
\providecommand{\MR}{\relax\ifhmode\unskip\space\fi MR }
\providecommand{\MRhref}[2]{%
  \href{http://www.ams.org/mathscinet-getitem?mr=#1}{#2}
}
\providecommand{\href}[2]{#2}

%

\end{document}